\colorlet{siaminlinkcolor}{green!50!black}
\colorlet{siamexlinkcolor}{red!100!black}
\newcommand{\thickhline}{%
	\noalign {\ifnum 0=`}\fi \hrule height 1.2pt
	\futurelet \reserved@a \@xhline
}
\newtheorem{theorem}{Theorem}[section]
\newtheorem{lemma}[theorem]{Lemma}
\newtheorem{proposition}[theorem]{Proposition}
\theoremstyle{definition}
\newtheorem{definition}[theorem]{Definition}
\theoremstyle{remark}
\newtheorem{remark}[theorem]{Remark}
\numberwithin{equation}{section}
\numberwithin{figure}{section}
\crefname{example}{Example}{Examples}
\crefname{hypothesis}{Hypothesis}{Hypotheses}
\crefname{conj}{Conjecture}{Conjectures}
\begin{document}

\title[A unified SP-PFEM for anisotropic surface diffusion]{A unified structure-preserving parametric finite element method for
 anisotropic surface diffusion}


\author[W. Bao]{Weizhu Bao}
\address{Department of Mathematics, National University of Singapore, Singapore 119076}
\email{matbaowz@nus.edu.sg}

\author[Y. Li]{Yifei Li}
\address{Department of Mathematics, National University of Singapore, Singapore 119076}
\email{e0444158@u.nus.edu}

\subjclass[2020]{Primary 65M60, 65M12, 53E40, 53E10, 35K55}

\date{}
\keywords{anisotropic surface diffusion,  anisotropic surface energy density, parametric finite element method, structure-preserving, unconditional energy stability}

\begin{abstract}
We propose and analyze a unified structure-preserving parametric finite element method (SP-PFEM) for the anisotropic surface diffusion of curves in two dimensions $(d=2)$ and surfaces in three dimensions $(d=3)$ with an arbitrary anisotropic surface energy density $\gamma(\boldsymbol{n})$, where $\boldsymbol{n}\in \mathbb{S}^{d-1}$ represents the outward unit vector. By introducing a novel unified surface energy matrix $\boldsymbol{G}_k(\boldsymbol{n})$ depending on $\gamma(\boldsymbol{n})$, the Cahn-Hoffman $\boldsymbol{\xi}$-vector and a stabilizing function $k(\boldsymbol{n}):\  \mathbb{S}^{d-1}\to {\mathbb R}$, we obtain a unified and conservative variational formulation for the anisotropic surface diffusion via different surface differential operators, including the surface gradient operator, the surface divergence operator and the surface Laplace-Beltrami operator. A SP-PFEM discretization is presented for the variational problem. In order to establish the unconditional energy stability of the proposed SP-PFEM under a very mild condition on  $\gamma(\boldsymbol{n})$, we propose a new framework via {\sl local energy estimate} for proving energy stability/structure-preserving properties of the parametric finite element method for the anisotropic surface diffusion. This framework sheds light on how to prove unconditional energy stability of other numerical methods for geometric partial differential equations. Extensive numerical results are reported to demonstrate the efficiency and accuracy as well as structure-preserving properties of the proposed SP-PFEM for the anisotropic surface diffusion with arbitrary anisotropic surface energy density $\gamma(\boldsymbol{n})$ arising from different applications.
\end{abstract}

\maketitle


\section{Introduction}
\label{intro}

Surface diffusion is a fundamental model in materials science, which describes the diffusion of atoms or molecules within the surface of a solid material \cite{Mullins57}. In many solid materials, the diffusion rate varies from the crystallographic directions, attributable to differences in surface lattice orientations. This phenomenon, known as the anisotropic effect, is typically described by the anisotropic surface energy density and characterized by anisotropic surface diffusion. Anisotropic surface diffusion plays a critical role in the surface/materials sciences \cite{taylor1992overview,giga2006surface}, such as the growth of thin films \cite{gurtin2002interface,chang1999thermodynamic}, the formation of surface morphological patterns \cite{Thompson12}, and the design in heterogeneous catalysis \cite{hauffe1955application}. It also has many applications in solid-state physics and computer sciences, including solid-state dewetting \cite{jiang2012,Ye10a,wang2015sharp,Naffouti17,ZJB2021,ZJB2020}; producing continuous nanoporous metal coatings \cite{sharipova2022solid}; quantum dots manufacturing \cite{Fonseca14}; and image processing \cite{clarenz2000anisotropic}, among others. 

Let $\Gamma:=\Gamma(t)\subset\mathbb{R}^d$ be the evolving closed and orientable curve in two dimensions (2D) with $d=2$ or surface in three dimensions (3D) with $d=3$, and $\boldsymbol{n}\in \mathbb{S}^{d-1}$ represents the outward unit normal vector of $\Gamma(t)$. Assuming that the anisotropic effect is characterized by the anisotropic surface energy density $\gamma(\boldsymbol{n})>0$, then the total surface energy of $\Gamma$ is defined  as
\begin{equation}\label{Wgamma}
 W(\Gamma):=\int_{\Gamma} \gamma(\boldsymbol{n})\,dA, 
\end{equation} 
where $dA$ represents the area element. By using the thermodynamic variation, one can obtain 
 the chemical potential $\mu$ (also known as the weighted mean curvature $H_{\gamma}$) as
\begin{equation}\label{muHga}
\mu =H_{\gamma}:= \frac{\delta W(\Gamma)}{\delta \Gamma}=\lim_{\varepsilon\to0}
\frac{W(\Gamma^\varepsilon)-W(\Gamma)}{\varepsilon},
\end{equation}
where $\Gamma^{\varepsilon}$ represents a small perturbation of $\Gamma$ 
(see \cite{jiang2016solid,jiang2019sharp} for more details).
Then the anisotropic surface 
 diffusion of $\Gamma(t)$ is formulated as the following geometric 
 flow \cite{jiang2012,Mullins57,Thompson12,Naffouti17}
\begin{equation}\label{eq: aSD, geometric flow}
    V_n = \Delta_{\Gamma} \mu, 
\end{equation}
where $V_n$ denotes the normal velocity of $\Gamma(t)$ and $\Delta_{\Gamma}$ is the surface Laplace-Beltrami operator.

To formulate the chemical potential $\mu$, it is convenient to introduce 
the one-homogeneous extension of the anisotropic surface energy density $\gamma(\boldsymbol{n})$:
\begin{equation}\label{eq: def of gamma p}
\gamma(\boldsymbol{p}):=
    \begin{cases}
     |\boldsymbol{p}|\, \gamma\left(\frac{\boldsymbol{p}}
{|\boldsymbol{p}|}\right),&\forall \boldsymbol{p}=(p_1,\ldots, p_d)^T\in \mathbb{R}^d_*:=\mathbb{R}^d\setminus \{\boldsymbol{0}\};\\
      0, & \boldsymbol{p}=\boldsymbol{0},
    \end{cases}
\end{equation}
where $|\boldsymbol{p}|=\sqrt{p_1^2+\ldots+p_d^2}$. Following this, the Cahn-Hoffman $\boldsymbol{\xi}$-vector is defined as \cite{cahn1974vector}.
\begin{equation}\label{eq: def of xi}
  \boldsymbol{\xi}=(\xi_1, \xi_2, \ldots, \xi_d)^T=\boldsymbol{\xi}(\boldsymbol{n}):=\nabla \gamma(\boldsymbol{p})|_{\boldsymbol{p}=\boldsymbol{n}}.
\end{equation}
By \cite{hoffman1972vector}, the chemical potential $\mu$ can be represented by $\boldsymbol{\xi}$ as
\begin{equation}\label{eq: alter def of mu}
        \mu =\nabla_{\Gamma}\,\cdot \boldsymbol{\xi},
\end{equation}
where $\nabla_\Gamma\,\cdot$ is the surface divergence operator.

When $\gamma(\boldsymbol{n})\equiv 1$, i.e., isotropic case, then $\boldsymbol{\xi}=\boldsymbol{n}$. Consequently, the chemical potential $\mu$ (or the weighted mean curvature $H_\gamma$) becomes $\nabla_\Gamma \,\cdot \boldsymbol{n}$, thereby reducing to the mean curvature $H$ in 3D and the curvature $\kappa$ in 2D. In this case, the anisotropic surface diffusion \eqref{eq: aSD, geometric flow} collapses to the well-known surface diffusion. Like the surface diffusion, the anisotropic surface diffusion \eqref{eq: aSD, geometric flow} is characterized as a fourth-order, highly nonlinear partial differential equation. It possesses two fundamental geometric properties \cite{Cahn94,deckelnick2005computation}: (i) the conservation of the enclosed volume $V(t)$ by $\Gamma(t)$, and (ii) the decrease in total surface energy $W(t)$. In fact, it can be regarded as a $H^{-1}$-gradient flow of the total energy $W(\Gamma)$ in \eqref{Wgamma} \cite{taylor1992ii}. Therefore, finding a numerical approximation of the solution to \eqref{eq: aSD, geometric flow} that can preserve the two geometric properties is a notoriously difficult task.

Various numerical schemes have been developed to simulate anisotropic surface diffusion. These methods include the marker-particle method \cite{du2010tangent}, the finite difference method \cite{bansch2004surface,deckelnick2005computation}, the crystalline method \cite{carter1995shape,girao1996crystalline}, the discontinuous Galerkin finite element method \cite{xu2009local}, and the evolving surface finite element method \cite{kovacs2017convergence,jiang2021}. However, the absence of tangential velocity in the anisotropic surface diffusion \eqref{eq: aSD, geometric flow} often leads these methods to potential mesh point collisions.  To avoid this problem,  various methods with artificial tangential motion have been proposed, including using the DeTurck trick, which utilizes harmonic map heat flow on the initial surface \cite{EF2017}, the minimal deformation rate (MDR) approach \cite{HL2022}, and the minimal deformation (MD) formulation \cite{DL2024}, which employs a harmonic map from the initial surface onto the current surface. However, these adaptations fail to preserve the two geometric properties.  To address this issue, Barrett, Garcke and N{\"u}rnberg proposed a parametric finite element method (PFEM), which allows tangential movement that ensures the mesh points are asymptotically equally distributed for the surface diffusion of curves in 2D \cite{barrett2007parametric}. Moreover, it was proven to be unconditionally energy-stable, and thus is also known as energy-stable PFEM (ES-PFEM). The ES-PFEM was further extended to simultaneously accommodate both curves in 2D and surfaces in 3D, while still maintaining unconditional energy stability \cite{barrett2008parametric}. Recently, by introducing a clever approximation of the unit normal vector $\boldsymbol{n}$, Bao and Zhao proposed a structure-preserving PFEM (SP-PFEM) that not only inherits the unconditionally energy stability, but also preserves the enclosed volume in both 2D and 3D \cite{bao2021structurepreserving,bao2022volume,bao2022structure,BZ2023}. As a result, the PFEM has achieved remarkable success in isotropic surface diffusion and other geometric flows. For further details, we refer to \cite{Barrett2020}.

It is desirable to extend the PFEM to anisotropic surface diffusion with an arbitrary anisotropy while maintaining its structure-preserving/energy-stable property. Although numerous PFEMs for anisotropic surface diffusion with arbitrary anisotropic energy have been proposed \cite{bao2017parametric,jiang2016solid,jiang2019sharp,Hausser07}, they lack a rigorous energy stability analysis. The first energy-stable extension was developed by Barrett, Garcke, and Nürnburg for 2-dimensional curves \cite{barrett2008numerical}, and was subsequently extended to 3-dimensional surfaces \cite{barrett2008variational} for the very specific Riemannian metric anisotropy. Later, Bao and Li proposed an ES-PFEM for curves in 2D \cite{li2020energy} with general anisotropic surface energies. Nevertheless, the energy-stable condition on $\gamma(\boldsymbol{n})$ is rather complicated and restrictive. Recently, by introducing a symmetrized surface energy matrix $\boldsymbol{Z}_{k}(\boldsymbol{n})$, Bao, Jiang and Li proposed a symmetrized ES-PFEM for 2-dimensional curves \cite{bao2021symmetrized} and then 3-dimensional surfaces \cite{bao2022symmetrized}. The symmetrized ES-PFEM was proven to be energy-stable for any symmetric surface energy density, i.e., $\gamma(-\boldsymbol{n})=\gamma(\boldsymbol{n})$. Very recently, by introducing a novel surface energy matrix $\boldsymbol{G}_{k}(\boldsymbol{n})$, Bao and Li developed an ES-PFEM for anisotropic surface diffusion of 2-dimensional curves that only requires $\gamma(-\boldsymbol{n})<3\gamma(\boldsymbol{n})$ \cite{bao2022structure}. However, their analysis can not be extended to 3-dimensional surfaces due to that many surface operators are written in terms of the arclength $s$. To the best of our knowledge, no energy-stable PFEM for anisotropic surface diffusion of 3D surfaces with $\gamma(\boldsymbol{n})\neq \gamma(-\boldsymbol{n})$ has been reported in the literature.

The main aim of this paper is to design a unified structure-preserving PFEM for anisotropic surface diffusion for both 2-dimensional curves and 3-dimensional surfaces with with an arbitrary $\gamma(\boldsymbol{n})$, and to develop a unified analytical framework to prove the volume conservation and unconditional energy dissipation at the full-discretized level. Our contributions in this paper can be summarized in the following two aspects.\\

\vspace{1em}

\textbf{A. Derivation.} We construct a unified SP-PFEM for anisotropic surface diffusion \eqref{eq: aSD, geometric flow} for both curves in 2D and surfaces in 3D, which involves the following key steps
\begin{itemize}
\item We introduce a unified surface energy matrix $\boldsymbol{G}_k(\boldsymbol{n})$ and a stabilizing function $k(\boldsymbol{n})$ based on the $\boldsymbol{\xi}$-vector, see \eqref{eq: def of Gk}. Subsequently, we derive a unified strong/weak formulation that characterizes the chemical potential $\mu$ via $\boldsymbol{G}_k(\boldsymbol{n})$, utilizing the surface differential operator $\nabla_\Gamma$ (Theorem \ref{thm: mu by Gk, strong}, \ref{thm: mu by Gk}).
\item With the unified weak formulation of $\mu$, we give a unified conservative weak formulation for the anisotropic surface diffusion \eqref{eq: weak continuous}.
\item We utilize the $\Delta$-complex to provide a unified spatial approximation of $\Gamma$, accommodating both 2-dimensional curves and 3-dimensional surfaces. This, together with the implicit-explicit Euler method in time, yield our novel unified structure-preserving PFEM full discretization \eqref{eq: full PFEM} for the anisotropic surface diffusion. 
  \end{itemize}
\textbf{B. Analysis.} We significantly improve the energy-stable condition for $\gamma(\boldsymbol{n})$. We establish a novel and comprehensive analytical framework to prove the unconditional energy stability of the proposed SP-PFEM \eqref{eq: full PFEM}. This framework is characterized by three integrated  components: the \textit{local energy estimate}, the unified \textit{minimal stabilizing function} $k_0(\boldsymbol{n})$, and a unified approach for establishing the existence of $k_0(\boldsymbol{n})$. This framework not only enriches theoretical understanding but also sets a new benchmark in practical applications.
\begin{itemize}
\item We establish the energy stability of our unified SP-PFEM \eqref{eq: full PFEM} under the following elegant condition
\begin{equation}\label{eq: energy stable condition}
  \gamma(-\boldsymbol{n})<(5-d)\gamma(\boldsymbol{n})=\begin{cases}
      3\gamma(\boldsymbol{n}),&d=2;\\ 
      2\gamma(\boldsymbol{n}), & d=3.
    \end{cases}\qquad \text{and} \quad \gamma(\boldsymbol{p})\in C^2(\mathbb{R}^{d}_*).
\end{equation}
Our new energy-stable condition \eqref{eq: energy stable condition} is the first in handling non-symmetric $\gamma(\boldsymbol{n})$ in 3D, aligning with existing mild conditions in 2D as per \cite{bao2022structure}. Remarkably, the symmetric anisotropic $\gamma(-\boldsymbol{n})=\gamma(\boldsymbol{n})$ satisfies the condition \eqref{eq: energy stable condition} automatically.
\item We introduce a new and unified concept \textit{local energy estimate} \eqref{eq: energy difference two steps, local}, which is a sufficient condition for the energy stability.
\item We introduce a unified \textit{minimal stabilizing function}, $k_0(\boldsymbol{n})$, defined via the positive semi-definiteness of an auxiliary matrix, which is crucial for establishing the \textit{local energy estimate}. This unified definition contrasts with prior research \cite{bao2021symmetrized,bao2022symmetrized,bao2022structure}, where $k_0(\boldsymbol{n})$ was dependent on dimension-dependent inequalities.
\item We develop a unified approach to establish the existence of $k_0(\boldsymbol{n})$. Firstly, we reduce the existence of $k_0(\boldsymbol{n})$ to the positive semi-definiteness of the auxiliary matrix. Subsequently, we employ the representation of $SO(d)$ to prove the positive semi-definiteness, see Lemma \ref{lem: compactness}.
    Here $SO(d)$ stands for the special orthogonal group in dimension $d$.
\end{itemize}

The rest of this paper is organized as follows. In section 2, we introduce the mathematical formulations and present the unified surface energy matrix $\boldsymbol{G}_k(\boldsymbol{n})$. Utilizing $\boldsymbol{G}_k(\boldsymbol{n})$, we further derive a novel unified strong/weak formulation for the chemical potential $\mu$ and the anisotropic surface diffusion. In Section 3, we present a unified structure-preserving PFEM full discretization, achieved through $\Delta$-complex based unified spatial discretization and implicit-explicit Euler time discretization. Consequently, we state the main result, the structure-preserving property of the unified SP-PFEM.  Section 4 develops a comprehensive analytical framework for energy stability, starting with defining the minimal stabilizing function $k_0(\boldsymbol{n})$ using the auxiliary matrices $\tilde{M}$ and $M$ for $d=2$ and $d=3$, respectively. Assuming the existence of $k_0(\boldsymbol{n})$, we establish the main result by utilizing the local energy estimate. The existence of $k_0(\boldsymbol{n})$ is proved by introducing a unified approach in section 5. Section 6 provides the numerical evidence to demonstrate our structure-preserving analytical results and show efficiency and accuracy of the unified SP-PFEM. Finally, some concluding remarks are drawn in section 7. 

\section{A unified formulation}
\setcounter{equation}{0}

\subsection{Mathematical formulation}
Let $\Gamma:=\Gamma(t)\subset \mathbb{R}^d$ represent the closed orientable $C^2$-evolving curve/surface, and  $\boldsymbol{n}$ be the outward unit normal vector of $\Gamma(t)$. The parameterization of $\Gamma(t)$ is given by $\boldsymbol{X}(\boldsymbol{\rho}, t)$ as follows:
\begin{equation}\label{eq: global para}
  \boldsymbol{X}(\cdot, t):\Gamma_0 \to \mathbb{R}^d, \,(\boldsymbol{\rho}, t)\mapsto \boldsymbol{X}(\boldsymbol{\rho}, t):=(X_1(\boldsymbol{\rho},t), \ldots, X_d(\boldsymbol{\rho}, t))^T,
\end{equation}
where $\boldsymbol{\rho}\in \Gamma_0\subset\mathbb{R}^d$ is the initial closed orientable $C^2$-evolving curve/surface. 

Consider $f$ as a differentiable scalar-valued function on $\Gamma(t)$, the surface gradient operator $\nabla_{\Gamma} f$ is defined as \cite{deckelnick2005computation,bao2022symmetrized}
\begin{equation}\label{eq: surface gradient}
    \nabla_{\Gamma} f=\nabla_{\Gamma(t)} f:=(\underline{D}_1 f, \ldots, \underline{D}_d f)^T.
\end{equation}
For the definitions of $\underline{D}_1, \ldots, \underline{D}_d$, see \cite{deckelnick2005computation}.

The surface Jacobian, surface divergence for a differentiable vector-valued function $\boldsymbol{f}=(f_1, \ldots, f_d)^T\in \mathbb{R}^d$, and the surface Laplace-Beltrami for a second-order differentiable scalar-valued function $f$ defined on $\Gamma(t)$ are
\begin{subequations}
\label{eq: surface grad and divergence}
\begin{align}
\label{eq: surface jacobian}
&\nabla_{\Gamma} \boldsymbol{f}=\nabla_{\Gamma(t)} \boldsymbol{f} := \begin{bmatrix}(\nabla_{\Gamma} f_1)^T\\\vdots\\(\nabla_{\Gamma} f_d)^T\end{bmatrix}= \begin{bmatrix}\underline{D}_1f_1&\cdots&\underline{D}_d f_1\\\vdots&\ddots&\vdots\\\underline{D}_1f_d&\cdots&\underline{D}_d f_d\end{bmatrix},\\
\label{eq: surface divergence}
&\nabla_{\Gamma}\cdot \boldsymbol{f}=\nabla_{\Gamma(t)} \cdot \boldsymbol{f}:=\sum_{i=1}^d\underline{D}_i f_i,\\
\label{eq: surface laplace}
&\Delta_\Gamma f=\Delta_{\Gamma(t)} f:=\nabla_{\Gamma}\, \cdot \,\nabla_{\Gamma} f=\sum_{i=1}^d\underline{D}_i\left(\underline{D}_i f\right).
\end{align}
\end{subequations}

Utilizing the alternate formulation of chemical potential $\mu$ \eqref{eq: alter def of mu}, alongside the definition of $\boldsymbol{\xi}$ \eqref{eq: def of xi}, and the parameterization $\boldsymbol{X}$ \eqref{eq: global para}, the anisotropic surface diffusion equation \eqref{eq: aSD, geometric flow} can be reformulated into the following PDE:
\begin{subequations}
\label{eq: asd pde}
\begin{empheq}[left=\empheqlbrace]{align}
\label{eq: asd pde1}
& \partial_t \boldsymbol{X} =\Delta_{\Gamma} \mu\, \boldsymbol{n},\\
\label{eq: asd pde2}
&\mu = \nabla_{\Gamma} \cdot \boldsymbol{\xi},\, \quad \boldsymbol{\xi}(\boldsymbol{n})=\nabla \gamma(\boldsymbol{p})|_{\boldsymbol{p}=\boldsymbol{n}}.
\end{empheq}
\end{subequations}

To derive an equivalent formulation for \eqref{eq: asd pde}, we introduce the functional space $L^2(\Gamma(t))$ with respect to $\Gamma(t)$ as follows
\begin{equation}\label{eq: def of L2 space}
L^2(\Gamma(t)):=\left\{u: \Gamma(t)\rightarrow \mathbb{R} \ |\   \int_{\Gamma(t)} |u|^2 \, dA <+\infty \right\},
\end{equation}
with the inner product $(\cdot, \cdot)_{\Gamma(t)}$ as
\begin{equation}
  (u, v)_{\Gamma(t)}:=\int_{\Gamma(t)}u\,v\, dA \quad \forall u, v\in L^2(\Gamma(t)).
\end{equation}
The functional spaces $[L^2(\Gamma(t))]^d$ and $[L^2(\Gamma(t))]^{d\times d}$ can be given similarly. In particular, the inner product for two matrix-valued functions $\boldsymbol{U}, \boldsymbol{V}\in [L^2(\Gamma(t))]^{d\times d}$ is emphasized as
\begin{equation}\label{eq: def of L2 inner product, matrix}
    \langle \boldsymbol{U}, \boldsymbol{V}\rangle_{\Gamma(t)}:=\int_{\Gamma(t)}\boldsymbol{U}:\boldsymbol{V}\,dA,
\end{equation}
here $\boldsymbol{U}:\boldsymbol{V}=\text{Tr}(\boldsymbol{V}^T\boldsymbol{U})$ is the Frobenius inner product. 

Furthermore, the Sobolev spaces $H^1(\Gamma(t))$ and $[H^1(\Gamma(t))]^d$ are defined as
\begin{equation}\label{eq: def of H1 space}
H^1(\Gamma(t)):=\left\{u: \Gamma(t)\rightarrow \mathbb{R} \ |\   u\in L^2(\Gamma(t)), \,\nabla_{\Gamma} u\in [L^2(\Gamma(t))]^d\, \right\},
\end{equation}
\begin{equation}\label{eq: def of H1 space, vector}
[H^1(\Gamma(t))]^d:=\left\{\boldsymbol{u}: \Gamma(t)\rightarrow \mathbb{R}^d \ |\   \boldsymbol{u}\in [L^2(\Gamma(t))]^d, \,\nabla_{\Gamma}\, \boldsymbol{u}\in [L^2(\Gamma(t))]^{d\times d}\, \right\}.
\end{equation}

\subsection{A unified surface energy matrix and conservative  formulation}
In order to develop a formulation for the PDE representation \eqref{eq: asd pde} of the anisotropic surface diffusion, it is essential to obtain an appropriate formulation of $\mu$. To achieve this, we introduce a unified surface energy matrix $\boldsymbol{G}_k(\boldsymbol{n})$ as follows:
\begin{definition}[Surface energy matrix]
The unified surface energy matrix $\boldsymbol{G}_k(\boldsymbol{n})$ is given as
\begin{equation}\label{eq: def of Gk}
\boldsymbol{G}_k=\boldsymbol{G}_k(\boldsymbol{n}):=\gamma(\boldsymbol{n})I_d
-\boldsymbol{n}\boldsymbol{\xi}^T+\boldsymbol{\xi}\boldsymbol{n}^T+k(\boldsymbol{n})
\boldsymbol{n}\boldsymbol{n}^T:=\boldsymbol{G}_k^{(s)}+\boldsymbol{G}^{(a)},
\end{equation}
where $I_d$ is the $d\times d$ identity matrix, 
$k(\boldsymbol{n}): \mathbb{S}^{d-1}\to \mathbb{R}_{\geq 0}$ is a stabilizing function,
and $\boldsymbol{G}_k^{(s)}$ is its symmetric part and $\boldsymbol{G}^{(a)}$ is its anti-symmetric part, which are given as
\begin{equation}\label{eq: Gs and Ga}
  \boldsymbol{G}_k^{(s)}:=\gamma(\boldsymbol{n})I_d+k(\boldsymbol{n})\boldsymbol{n}\boldsymbol{n}^T, \,\boldsymbol{G}^{(a)}:=-\boldsymbol{n}\boldsymbol{\xi}^T+\boldsymbol{\xi}\boldsymbol{n}^T, \, \boldsymbol{G}_k=\boldsymbol{G}_k^{(s)}+\boldsymbol{G}^{(a)}.
\end{equation}
\end{definition}

The significance of the unified surface energy matrix $\boldsymbol{G}_k(\boldsymbol{n})$ is demonstrated in the following theorem.
{
\begin{theorem}[Strong formulation of $\mu$]\label{thm: mu by Gk, strong}
  The chemical potential $\mu$ given in \eqref{eq: alter def of mu} can be represented in terms of $\boldsymbol{G}_k(\boldsymbol{n})$ through the following strong formulation:
  \begin{equation}\label{eq: mu by Gk, strong}
    \nabla_{\Gamma} \cdot \left(\boldsymbol{G}_k(\boldsymbol{n})\nabla_\Gamma \boldsymbol{X}\right) = -\mu \boldsymbol{n}.
  \end{equation}
\end{theorem}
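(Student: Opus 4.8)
The plan is to derive \eqref{eq: mu by Gk, strong} by a short computation that combines three ingredients: (i) the surface Jacobian of the identity map, $\nabla_\Gamma\boldsymbol{X}=I_d-\boldsymbol{n}\boldsymbol{n}^T=:P$, the orthogonal projection onto the tangent plane, which satisfies $\boldsymbol{n}^TP=\boldsymbol{0}^T$ and $P\boldsymbol{n}=\boldsymbol{0}$; (ii) Euler's identity for the one-homogeneous extension $\gamma(\boldsymbol{p})$ from \eqref{eq: def of gamma p}, which at $\boldsymbol{p}=\boldsymbol{n}$ reads $\boldsymbol{\xi}\cdot\boldsymbol{n}=\gamma(\boldsymbol{n})$ in view of \eqref{eq: def of xi}; and (iii) the symmetry of the extended Weingarten map $\nabla_\Gamma\boldsymbol{n}$ (the symmetry of the second fundamental form of the $C^2$ surface $\Gamma$), i.e.\ $\underline{D}_j n_i=\underline{D}_i n_j$ for all $i,j$.

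First I would simplify $\boldsymbol{G}_k(\boldsymbol{n})\nabla_\Gamma\boldsymbol{X}=\boldsymbol{G}_k(\boldsymbol{n})P$ by multiplying out \eqref{eq: def of Gk} term by term: since $\boldsymbol{n}^TP=\boldsymbol{0}^T$, the terms $\boldsymbol{\xi}\boldsymbol{n}^T$ and $k(\boldsymbol{n})\boldsymbol{n}\boldsymbol{n}^T$ are annihilated on the right by $P$, the term $\gamma(\boldsymbol{n})I_d$ becomes $\gamma(\boldsymbol{n})P$, and $-\boldsymbol{n}\boldsymbol{\xi}^T$ becomes $-\boldsymbol{n}\boldsymbol{\xi}^T+(\boldsymbol{\xi}\cdot\boldsymbol{n})\boldsymbol{n}\boldsymbol{n}^T$. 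Adding these and using $\boldsymbol{\xi}\cdot\boldsymbol{n}=\gamma(\boldsymbol{n})$ to cancel the $\boldsymbol{n}\boldsymbol{n}^T$ contributions yields the clean, $k$-independent identity
\begin{equation*}
  \boldsymbol{G}_k(\boldsymbol{n})\,\nabla_\Gamma\boldsymbol{X}=\gamma(\boldsymbol{n})\,I_d-\boldsymbol{n}\boldsymbol{\xi}^T .
\end{equation*}
(One may equivalently split this through $\boldsymbol{G}_k^{(s)}P=\gamma(\boldsymbol{n})P$ and $\boldsymbol{G}^{(a)}P=-\boldsymbol{n}\boldsymbol{\xi}^T+\gamma(\boldsymbol{n})\boldsymbol{n}\boldsymbol{n}^T$.)

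Next I would take the surface divergence, applied row-wise so as to be consistent with $\Delta_\Gamma=\nabla_\Gamma\cdot\nabla_\Gamma$ in \eqref{eq: surface laplace}. Using the Leibniz rule for $\underline{D}_j$ and $\mu=\nabla_\Gamma\cdot\boldsymbol{\xi}$ from \eqref{eq: alter def of mu}, the right-hand side becomes
\begin{equation*}
  \nabla_\Gamma\cdot\bigl(\gamma(\boldsymbol{n})I_d-\boldsymbol{n}\boldsymbol{\xi}^T\bigr)=\nabla_\Gamma\bigl(\gamma(\boldsymbol{n})\bigr)-(\nabla_\Gamma\boldsymbol{n})\boldsymbol{\xi}-\mu\,\boldsymbol{n},
\end{equation*}
where $\gamma(\boldsymbol{n})$ denotes the scalar field $\boldsymbol{\rho}\mapsto\gamma(\boldsymbol{n}(\boldsymbol{\rho},t))$ on $\Gamma(t)$ and the $i$-th component of $(\nabla_\Gamma\boldsymbol{n})\boldsymbol{\xi}$ is $\sum_j(\underline{D}_j n_i)\xi_j$. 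The chain rule together with \eqref{eq: def of xi} gives $\underline{D}_i(\gamma(\boldsymbol{n}))=\sum_j\xi_j\,\underline{D}_i n_j$, so the first two terms on the right differ componentwise by $\sum_j\xi_j(\underline{D}_i n_j-\underline{D}_j n_i)$, which vanishes by the symmetry of $\nabla_\Gamma\boldsymbol{n}$. Hence the right-hand side equals $-\mu\,\boldsymbol{n}$, establishing \eqref{eq: mu by Gk, strong}.

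I do not expect a genuine obstacle here: the argument is essentially bookkeeping, and the two places that require care are (a) fixing the convention for the surface divergence of a matrix field and (b) invoking the two structural facts at the right moment---Euler's relation $\boldsymbol{\xi}\cdot\boldsymbol{n}=\gamma(\boldsymbol{n})$ in the algebraic step and the symmetry $\underline{D}_j n_i=\underline{D}_i n_j$ in the differential step. I would include a one-line reminder of why the latter holds (on the tangent plane it is the symmetry of the shape operator, while $(\nabla_\Gamma\boldsymbol{n})\boldsymbol{n}=\boldsymbol{0}$ and $\boldsymbol{n}^T\nabla_\Gamma\boldsymbol{n}=\boldsymbol{0}^T$ follow from $|\boldsymbol{n}|^2\equiv1$ and $P\boldsymbol{n}=\boldsymbol{0}$), since it is the only ingredient that is geometric rather than purely algebraic.
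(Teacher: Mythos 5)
Your proposal is correct and follows essentially the same route as the paper: first simplify $\boldsymbol{G}_k(\boldsymbol{n})\nabla_\Gamma\boldsymbol{X}$ to the $k$-independent matrix $\gamma(\boldsymbol{n})I_d-\boldsymbol{n}\boldsymbol{\xi}^T$ using $\nabla_\Gamma\boldsymbol{X}=I_d-\boldsymbol{n}\boldsymbol{n}^T$ and Euler's relation $\boldsymbol{\xi}\cdot\boldsymbol{n}=\gamma(\boldsymbol{n})$, then take the row-wise surface divergence and cancel $\nabla_\Gamma(\gamma(\boldsymbol{n}))$ against $(\nabla_\Gamma\boldsymbol{n})\boldsymbol{\xi}$ via the chain rule and the symmetry of $\nabla_\Gamma\boldsymbol{n}$, leaving $-\mu\boldsymbol{n}$. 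The only difference is cosmetic: you argue componentwise with $\underline{D}_i,\underline{D}_j$, while the paper phrases the same cancellation using an orthonormal tangent frame and the identity $\nabla_\Gamma\cdot(\gamma(\boldsymbol{n})I_d)=(\nabla_\Gamma\boldsymbol{n})^T\boldsymbol{\xi}$.
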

\begin{proof}
  We begin by using the identity $\nabla_\Gamma \boldsymbol{X}=I_d-\boldsymbol{n}\boldsymbol{n}^T$ \cite[Lemma 9 (i)]{Barrett2020}. Combining this with \eqref{eq: def of Gk} and the identity $\gamma(\boldsymbol{n}) = \boldsymbol{\xi}\cdot \boldsymbol{n}$, we derive:
  \begin{align*}
    \boldsymbol{G}_k(\boldsymbol{n})\nabla_\Gamma \boldsymbol{X} &= \left(\gamma(\boldsymbol{n})I_d-\boldsymbol{n}\boldsymbol{\xi}^T+\boldsymbol{\xi}\boldsymbol{n}^T+k(\boldsymbol{n})\boldsymbol{n}\boldsymbol{n}^T\right)(I_d-\boldsymbol{n}\boldsymbol{n}^T)\\
    &= \gamma(\boldsymbol{n})I_d - \boldsymbol{n}\boldsymbol{\xi}^T - (\gamma(\boldsymbol{n})I_d - \boldsymbol{n}\boldsymbol{\xi}^T)\boldsymbol{n}\boldsymbol{n}^T\\
    &= \gamma(\boldsymbol{n})I_d-\boldsymbol{n}\boldsymbol{\xi}^T - \gamma(\boldsymbol{n})\boldsymbol{n}\boldsymbol{n}^T+(\boldsymbol{\xi}\cdot\boldsymbol{n})\boldsymbol{n}\boldsymbol{n}^T\\
    &= \gamma(\boldsymbol{n})I_d-\boldsymbol{n}\boldsymbol{\xi}^T.
  \end{align*}
  Next, applying Definition 5 from \cite{Barrett2020}, we obtain:
  \begin{align*}
    \nabla_\Gamma \cdot (\gamma(\boldsymbol{n})I_d) &= \nabla_\Gamma \gamma(\boldsymbol{n}) 
    = \sum_{i=1}^{d-1} \left(\partial_{\boldsymbol{\tau}_i} \gamma(\boldsymbol{n})\right)\boldsymbol{\tau}_i
    = \sum_{i=1}^{d-1} \left( \boldsymbol{\xi}\cdot \partial_{\boldsymbol{\tau}_i} \boldsymbol{n}\right)\boldsymbol{\tau}_i\\
    &= \left(\sum_{i=1}^{d-1}\boldsymbol{\tau}_i (\partial_{\boldsymbol{\tau}_i} \boldsymbol{n})^T\right) \boldsymbol{\xi}
    = (\nabla_\Gamma \boldsymbol{n})^T \boldsymbol{\xi},
  \end{align*}
  where $\{\boldsymbol{\tau}_1,\ldots, \boldsymbol{\tau}_{d-1}, \boldsymbol{n}\}$ forms an orthonormal basis of $\mathbb{R}^d$.
  Finally, we employ the chain rule of surface divergence and the symmetry of $\nabla_\Gamma \boldsymbol{n}$ from \cite{Barrett2020}, together with \eqref{eq: alter def of mu}, to deduce:
  \begin{align*}
    \nabla_\Gamma\cdot \left(\boldsymbol{G}_k(\boldsymbol{n})\nabla_\Gamma \boldsymbol{X}\right)
    &=\nabla_\Gamma \cdot \left(\gamma(\boldsymbol{n})I_d -\boldsymbol{n}\boldsymbol{\xi}^T\right)\\
    &= (\nabla_\Gamma \boldsymbol{n})^T \boldsymbol{\xi} - (\nabla_\Gamma\cdot \boldsymbol{\xi})\boldsymbol{n} - (\nabla_\Gamma \boldsymbol{n})\boldsymbol{\xi} \\
    &= (\nabla_\Gamma \boldsymbol{n}) \boldsymbol{\xi} - \mu \boldsymbol{n} - (\nabla_\Gamma \boldsymbol{n})\boldsymbol{\xi}\\
    &= -\mu \boldsymbol{n},
  \end{align*}
  which establishes the desired identity \eqref{eq: mu by Gk, strong}.
\end{proof}

Having established the unified strong formulation of $\mu$, we now present its unified weak formulation, which is crucial for the numerical scheme.
\begin{theorem}[Weak formulation of $\mu$]\label{thm: mu by Gk}
Let $\Gamma\subset\mathbb{R}^d$ be a closed orientable $C^2$-curve/surface with the outward unit normal vector $\boldsymbol{n}=(n_1, \ldots, n_d)^T$. For any $\boldsymbol{\omega}=(\omega_1, \ldots, \omega_d)^T \in [H^1(\Gamma)]^d$, the following identity holds:
\begin{equation}\label{eq: mu by Gk}
  \left(\mu\boldsymbol{n}, \boldsymbol{\omega}\right)_{\Gamma}=\langle \boldsymbol{G}_k(\boldsymbol{n})\nabla_{\Gamma} \boldsymbol{X}, \nabla_{\Gamma} \boldsymbol{\omega}\rangle_{\Gamma}.
\end{equation}
\end{theorem}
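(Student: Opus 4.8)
The plan is to obtain \eqref{eq: mu by Gk} directly from the strong formulation \eqref{eq: mu by Gk, strong} of \Cref{thm: mu by Gk, strong} by pairing it with a test field $\boldsymbol{\omega}$ and integrating by parts over the closed surface $\Gamma$. The preliminary observation that makes everything work is structural: since $\nabla_\Gamma \boldsymbol{X} = I_d - \boldsymbol{n}\boldsymbol{n}^T$ is the orthogonal projection onto the tangent plane (\cite[Lemma 9 (i)]{Barrett2020}), the matrix field $\boldsymbol{A} := \boldsymbol{G}_k(\boldsymbol{n})\nabla_\Gamma \boldsymbol{X}$ satisfies $\boldsymbol{A}\boldsymbol{n} = \boldsymbol{G}_k(\boldsymbol{n})(I_d - \boldsymbol{n}\boldsymbol{n}^T)\boldsymbol{n} = \boldsymbol{0}$; equivalently, each row $\boldsymbol{a}_i$ of $\boldsymbol{A}$ is a tangential vector field on $\Gamma$. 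Moreover, because $\Gamma$ is a closed $C^2$-hypersurface and $\gamma(\boldsymbol{p})$ is smooth away from the origin, $\boldsymbol{A}$ has entries in $C^1(\Gamma)\subset H^1(\Gamma)$, so the integration by parts below is legitimate.

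First I would take the $[L^2(\Gamma)]^d$-inner product of \eqref{eq: mu by Gk, strong} with an arbitrary $\boldsymbol{\omega} = (\omega_1,\ldots,\omega_d)^T \in [H^1(\Gamma)]^d$, which gives $-(\mu\boldsymbol{n},\boldsymbol{\omega})_\Gamma = (\nabla_\Gamma\cdot \boldsymbol{A},\boldsymbol{\omega})_\Gamma = \sum_{i=1}^d \int_\Gamma \omega_i\, (\nabla_\Gamma\cdot \boldsymbol{a}_i)\, dA$. Next I would apply the surface divergence theorem on the boundaryless surface $\Gamma$ together with the product rule $\nabla_\Gamma\cdot(\omega_i \boldsymbol{a}_i) = \omega_i\,\nabla_\Gamma\cdot \boldsymbol{a}_i + \nabla_\Gamma \omega_i \cdot \boldsymbol{a}_i$, obtaining $\int_\Gamma \omega_i\,\nabla_\Gamma\cdot \boldsymbol{a}_i\, dA = -\int_\Gamma \nabla_\Gamma \omega_i \cdot \boldsymbol{a}_i\, dA + \int_\Gamma (\nabla_\Gamma\cdot\boldsymbol{n})\,\omega_i\,(\boldsymbol{a}_i\cdot\boldsymbol{n})\, dA$. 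The decisive point is that the last term --- the mean-curvature contribution characteristic of Gauss--Green on a closed surface --- vanishes identically because $\boldsymbol{a}_i\cdot\boldsymbol{n} = (\boldsymbol{A}\boldsymbol{n})_i = 0$. Summing over $i$ and using $\sum_{i=1}^d \nabla_\Gamma\omega_i\cdot\boldsymbol{a}_i = \boldsymbol{A}:\nabla_\Gamma\boldsymbol{\omega}$ (the Frobenius pairing, by \eqref{eq: surface jacobian}) then yields $-(\mu\boldsymbol{n},\boldsymbol{\omega})_\Gamma = -\langle \boldsymbol{A},\nabla_\Gamma\boldsymbol{\omega}\rangle_\Gamma$, which is exactly \eqref{eq: mu by Gk}. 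If one prefers, this whole step can be replaced by invoking the weak integration-by-parts identity for tangential matrix fields already recorded in \cite{Barrett2020}.

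The only genuinely delicate point --- and where I would take most care --- is the surface integration by parts itself: on a closed hypersurface the Gauss--Green formula carries the extra term $\int_\Gamma (\nabla_\Gamma\cdot\boldsymbol{n})\,\omega_i\,(\boldsymbol{a}_i\cdot\boldsymbol{n})\,dA$, and the argument succeeds precisely because $\boldsymbol{G}_k(\boldsymbol{n})\nabla_\Gamma\boldsymbol{X}$ annihilates $\boldsymbol{n}$, so this term drops out; the remaining manipulations are routine. A minor loose end is the regularity of the test field: since $\boldsymbol{a}_i \in C^1(\Gamma)$ and $\omega_i \in H^1(\Gamma)$, the product $\omega_i\boldsymbol{a}_i$ lies in $H^1(\Gamma)$ and the divergence theorem applies directly, but one can also first prove the identity for smooth $\boldsymbol{\omega}$ and extend it to all of $[H^1(\Gamma)]^d$ by density, noting that both sides of \eqref{eq: mu by Gk} are continuous linear functionals of $\boldsymbol{\omega}$ in the $H^1(\Gamma)$-norm.
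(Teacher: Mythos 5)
Your proposal is correct and follows essentially the same route as the paper: both pair the strong formulation \eqref{eq: mu by Gk, strong} with $\boldsymbol{\omega}$ and integrate by parts, with the crucial fact that $\boldsymbol{G}_k(\boldsymbol{n})\nabla_\Gamma\boldsymbol{X}$ annihilates $\boldsymbol{n}$ being exactly what makes the curvature/boundary term vanish. The paper packages this step into a standalone divergence theorem for matrix-valued fields of the form $\boldsymbol{F}\nabla_\Gamma\boldsymbol{X}$ (Lemma~\ref{exlem: divergence theorem for matrix-valued functions}, proved via local parametrization), whereas you carry it out row-by-row via the standard Gauss--Green formula with its mean-curvature correction, but the underlying argument is the same.
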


\begin{proof}
  We begin with the strong formulation \eqref{eq: mu by Gk, strong}. Multiplying both sides by a test function $\boldsymbol{\omega}$ and integrating over $\Gamma$, we obtain:
  \begin{align}
  \label{newmunomega}
    \int_{\Gamma}\mu\boldsymbol{n}\cdot \boldsymbol{\omega}\,dA = -\int_{\Gamma}\Bigl(\nabla_{\Gamma}\cdot (\boldsymbol{G}_k(\boldsymbol{n})\nabla_{\Gamma} \boldsymbol{X})\Bigr)\cdot \boldsymbol{\omega}\,dA.
  \end{align}
  To simplify the right-hand side of \eqref{newmunomega}, we employ the divergence theorem for matrix-valued functions, as stated in Lemma \ref{exlem: divergence theorem for matrix-valued functions} in the Appendix A with 
  $\boldsymbol{F}=\boldsymbol{G}_k(\boldsymbol{n})$ in \eqref{matrxintegr}, to deduce that
  \begin{align*}
  -\int_{\Gamma}\Bigl(\nabla_{\Gamma}\cdot (\boldsymbol{G}_k(\boldsymbol{n})\nabla_{\Gamma} \boldsymbol{X})\Bigr)\cdot \boldsymbol{\omega}\,dA=\int_{\Gamma}\left(\boldsymbol{G}_k(\boldsymbol{n})\nabla_{\Gamma} \boldsymbol{X}\right):\nabla_{\Gamma}\boldsymbol{\omega}\,dA.
  \end{align*}
  Combining these two equations, we obtain the desired weak formulation \eqref{eq: mu by Gk}.
  \end{proof}}

  To obtain a unified weak formulation of \eqref{eq: asd pde}, we rewrite \eqref{eq: asd pde1} as $\boldsymbol{n}\cdot \partial_t \boldsymbol{X} = \Delta_{\Gamma} \mu$, multiply by a test function $\phi \in H^1(\Gamma(t))$, integrate over $\Gamma(t)$, and apply integration by parts. We then combine this with the unified weak form of \eqref{eq: asd pde2} derived from Theorem \ref{thm: mu by Gk}. The resulting unified conservative weak formulation is as follows: Let the initial closed and orientable curve/surface be $\Gamma_0$ and the function $\boldsymbol{X}_0(\boldsymbol{\rho})=\boldsymbol{\rho}, \forall \boldsymbol{\rho}\in \Gamma_0$. Find the solution $(\boldsymbol{X}(\cdot, t), \mu(\cdot, t))\in [H^1(\Gamma(t))]^d\times H^1(\Gamma(t))$, such that $\boldsymbol{X}(\cdot, 0)=\boldsymbol{X}_0(\cdot)$ and 
\begin{subequations}\label{eq: weak continuous}
\begin{align}
\label{eq: weak continuous 1}
&\left(\partial_t \boldsymbol{X}\cdot \boldsymbol{n}, \phi\right)_{\Gamma(t)}+\left(\nabla_{\Gamma}\mu, \nabla_{\Gamma} \phi\right)_{\Gamma(t)}=0,\quad \forall \phi\in H^1(\Gamma(t)),\\
\label{eq: weak continuous 2}
&\left(\mu\boldsymbol{n}, \boldsymbol{\omega}\right)_{\Gamma(t)}-\langle \boldsymbol{G}_k(\boldsymbol{n})\nabla_{\Gamma} \boldsymbol{X}, \nabla_{\Gamma} \boldsymbol{\omega}\rangle_{\Gamma(t)}=0,\quad \forall \boldsymbol{\omega}\in [H^1(\Gamma(t))]^d.
\end{align}
\end{subequations}

For the unified conservative weak formulation \eqref{eq: weak continuous}, in the same way as Theorem 2.2 in \cite{bao2022symmetrized}, it can be shown that the two geometric properties are well preserved.
\begin{proposition}\label{thm: two geometric properties, weak}
Let $\Gamma(t)$ be the solution of the unified conservative weak formulation \eqref{eq: weak continuous}. Denote $V(t)$ as the enclosed volume and $W(t)$ as the total energy of the closed orientable evolving curve/surface $\Gamma(t)$, respectively, which are formally given by
\begin{equation}
    V(t):=\frac{1}{d}\int_{\Gamma(t)}\boldsymbol{X}\cdot \boldsymbol{n}\, dA, \qquad W(t):=\int_{\Gamma(t)}\gamma(\boldsymbol{n})\,dA.
\end{equation} 
Then the enclosed volume $V(t)$ is conserved, and the total energy $W(t)$ is dissipative, i.e.,
\begin{equation}\label{eq: two geometric properties, weak}
  V(t)\equiv V(0)=\frac{1}{d}\int_{\Gamma_0}\boldsymbol{X}_0\cdot \boldsymbol{n}\, dA, \quad W(t)\le W(t^\prime)\le W(0), \ \forall t\ge t^\prime\ge 0.
\end{equation}
\end{proposition}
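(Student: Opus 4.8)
The plan is to mimic the proof of Theorem~2.2 in \cite{bao2022symmetrized}: first establish transport (first-variation) identities for the enclosed volume $V(t)$ and the total energy $W(t)$ along the flow $\boldsymbol{V}:=\partial_t\boldsymbol{X}$, and then feed these identities into the weak formulation \eqref{eq: weak continuous 1}--\eqref{eq: weak continuous 2} by inserting two specific test functions. As the statement itself indicates, the computation is formal, in the sense that we assume $\Gamma(t)$ is regular enough that $\partial_t\boldsymbol{X}\in[H^1(\Gamma(t))]^d$ and $\mu\in H^1(\Gamma(t))$ are admissible test functions.

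\textbf{Volume conservation.} First I would record the transport identity $\frac{\rmd}{\rmd t}V(t)=\int_{\Gamma(t)}\partial_t\boldsymbol{X}\cdot\boldsymbol{n}\,dA$. This holds because $V(t)=\frac{1}{d}\int_{\Gamma(t)}\boldsymbol{X}\cdot\boldsymbol{n}\,dA$ coincides, by the divergence theorem applied to the identity vector field (for which $\nabla\cdot\boldsymbol{X}=d$ in $\mathbb{R}^d$), with the Lebesgue measure of the region enclosed by $\Gamma(t)$, whose rate of change equals the flux of the boundary velocity through $\boldsymbol{n}$; note that only the normal component of $\boldsymbol{V}$ contributes. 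Then, taking the constant test function $\phi\equiv 1$ in \eqref{eq: weak continuous 1} and using $\nabla_{\Gamma}1=\boldsymbol{0}$, we get $(\partial_t\boldsymbol{X}\cdot\boldsymbol{n},1)_{\Gamma(t)}=0$, hence $\frac{\rmd}{\rmd t}V(t)=0$ and $V(t)\equiv V(0)$, which is the first assertion of \eqref{eq: two geometric properties, weak}.

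\textbf{Energy dissipation.} The key ingredient here is the transport identity $\frac{\rmd}{\rmd t}W(t)=\langle \boldsymbol{G}_k(\boldsymbol{n})\nabla_{\Gamma}\boldsymbol{X},\nabla_{\Gamma}\partial_t\boldsymbol{X}\rangle_{\Gamma(t)}$. I would derive it by differentiating $W(t)=\int_{\Gamma(t)}\gamma(\boldsymbol{n})\,dA$ using the standard evolution laws $\partial_t(dA)=(\nabla_{\Gamma}\cdot\boldsymbol{V})\,dA$ and $\partial_t\boldsymbol{n}=-(\nabla_{\Gamma}\boldsymbol{V})^T\boldsymbol{n}$ together with $\partial_t\gamma(\boldsymbol{n})=\boldsymbol{\xi}\cdot\partial_t\boldsymbol{n}$ from \eqref{eq: def of xi}, which gives $\frac{\rmd}{\rmd t}W(t)=\langle\gamma(\boldsymbol{n})I_d-\boldsymbol{n}\boldsymbol{\xi}^T,\nabla_{\Gamma}\boldsymbol{V}\rangle_{\Gamma(t)}$ (this is exactly the first variation $\delta W/\delta\Gamma$ of \eqref{muHga}--\eqref{eq: alter def of mu} written in matrix form), and then using the identity $\boldsymbol{G}_k(\boldsymbol{n})\nabla_{\Gamma}\boldsymbol{X}=\gamma(\boldsymbol{n})I_d-\boldsymbol{n}\boldsymbol{\xi}^T$ already established in the proof of Theorem~\ref{thm: mu by Gk, strong}. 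With this in hand, I take $\boldsymbol{\omega}=\partial_t\boldsymbol{X}$ in \eqref{eq: weak continuous 2} to rewrite the right-hand side as $(\mu\boldsymbol{n},\partial_t\boldsymbol{X})_{\Gamma(t)}$, and then $\phi=\mu$ in \eqref{eq: weak continuous 1} to obtain $(\mu\boldsymbol{n},\partial_t\boldsymbol{X})_{\Gamma(t)}=(\partial_t\boldsymbol{X}\cdot\boldsymbol{n},\mu)_{\Gamma(t)}=-(\nabla_{\Gamma}\mu,\nabla_{\Gamma}\mu)_{\Gamma(t)}\le 0$. Thus $\frac{\rmd}{\rmd t}W(t)=-(\nabla_{\Gamma}\mu,\nabla_{\Gamma}\mu)_{\Gamma(t)}\le0$, and integrating from $t'$ to $t$ yields $W(t)\le W(t')\le W(0)$ for all $t\ge t'\ge0$, completing \eqref{eq: two geometric properties, weak}.

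\textbf{Main obstacle.} The only genuinely nontrivial point is establishing the two transport identities rigorously — in particular the one for $W(t)$ — which relies on the evolution equations for the area element $dA$ and the unit normal $\boldsymbol{n}$ under a general (not purely normal) velocity field, and on the fact that tangential reparametrization affects neither $V(t)$ nor $W(t)$ since $\Gamma(t)$ is closed. Once these are granted, the rest is the short two-test-function argument above, structurally identical to \cite[Theorem~2.2]{bao2022symmetrized}.
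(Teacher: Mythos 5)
Your proof is correct and follows essentially the same route the paper intends: the paper itself gives no inline proof and simply cites Theorem~2.2 of \cite{bao2022symmetrized}, whose argument is exactly the two transport identities plus the three test-function choices $\phi\equiv1$, $\phi=\mu$, $\boldsymbol{\omega}=\partial_t\boldsymbol{X}$ that you describe. The only point worth highlighting, which you use implicitly, is that the identity $\boldsymbol{G}_k(\boldsymbol{n})\nabla_\Gamma\boldsymbol{X}=\gamma(\boldsymbol{n})I_d-\boldsymbol{n}\boldsymbol{\xi}^T$ from the proof of Theorem~\ref{thm: mu by Gk, strong} makes the stabilizing term $k(\boldsymbol{n})\boldsymbol{n}\boldsymbol{n}^T$ disappear at the continuous level, so the energy transport identity (and hence dissipation) holds for any choice of $k(\boldsymbol{n})\ge0$.
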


\section{A unified SP-PFEM}

\setcounter{equation}{0}

\subsection{A unified SP-PFEM discretization}
Choose a time step $\tau>0$, with $t_m=m\tau$ representing the discretized time level for $m=0, 1, \ldots$, and $\Gamma(t)$ at $t=t_m$ is approximated by  $\Gamma^m$. For a unified discretization of \eqref{eq: weak continuous}, we utilize a closed orientable $\Delta$-complex to approximate $\Gamma^m$, which is comprised by disjoint $(d-1)$-simplices $\sigma_j^m=[\boldsymbol{q}_{j_1}^m, \ldots, \boldsymbol{q}_{j_d}^m]$ for $1\leq j\leq J$, i.e., 
\begin{equation}
  \Gamma^m:=\cup_{j=1}^J \sigma_j^m.
\end{equation}
For the detailed definition of the $\Delta$-complex, we refer to \cite{Hatcher02}. Moreover, each $(d-1)$-simplex $\sigma_j \subset \mathbb{R}^d$ is associated with a direction vector $\mathcal{J}\{\sigma_j\}$, aligned with the orientation $[\boldsymbol{q}_{j_1}, \ldots, \boldsymbol{q}_{j_d}]$ as 
  \begin{equation}\label{eq: def of direction d}
      \mathcal{J}_j=\mathcal{J}\{\sigma_j\}:=(\boldsymbol{q}_{j_2}-\boldsymbol{q}_{j_1})\wedge\cdots\wedge (\boldsymbol{q}_{j_d}-\boldsymbol{q}_{j_1}).
  \end{equation}
Here $\wedge$ is the wedge product \cite{Hatcher02}, and this $\mathcal{J}_j$ satisfies
\begin{equation}\label{eq: wedge product}
    \mathcal{J}_j\cdot \boldsymbol{u}=\det [\boldsymbol{q}_{j_2}-\boldsymbol{q}_{j_1}, \ldots, \boldsymbol{q}_{j_d}-\boldsymbol{q}_{j_1}, \boldsymbol{u}], \qquad \forall \boldsymbol{u}\in \mathbb{R}^d,
\end{equation}
see \cite[Definition 45]{Barrett2020}. Specifically, for $d=2$, the $1$-simplex $\sigma_j=[\boldsymbol{q}_{j_1}, \boldsymbol{q}_{j_2}]$ is a line segment with vertices $\boldsymbol{q}_{j_1}$ and $\boldsymbol{q}_{j_2}$, and its direction vector $\mathcal{J}\{\sigma_j\}$ is defined as (c.f. Figure \ref{fig: illu})
\begin{equation}\label{eq: def of direction 2d}
  \mathcal{J}\{\sigma_j\}:=-(\boldsymbol{q}_{j_2}-\boldsymbol{q}_{j_1})^\perp,
 \end{equation}
  where $(u_1, u_2)^\perp=(u_2, -u_1), \forall \boldsymbol{u}=(u_1, u_2)\in \mathbb{R}^2$. 

For $d=3$, the $2$-simplex $\sigma_j=[\boldsymbol{q}_{j_1}, \boldsymbol{q}_{j_2}, \boldsymbol{q}_{j_3}]$ is a triangle with vertices $\boldsymbol{q}_{j_1}$, $\boldsymbol{q}_{j_2}$, and $\boldsymbol{q}_{j_3}$, and its direction vector $\mathcal{J}\{\sigma_j\}$ is given by  (c.f. Figure \ref{fig: illu})
\begin{equation}\label{eq: def of direction}
  \mathcal{J}\{\sigma_j\}:=(\boldsymbol{q}_{j_2}-\boldsymbol{q}_{j_1})\times 
  (\boldsymbol{q}_{j_3}-\boldsymbol{q}_{j_1}).
\end{equation}

 \begin{figure}[htp!]
\centering
\includegraphics[width=0.5\textwidth]{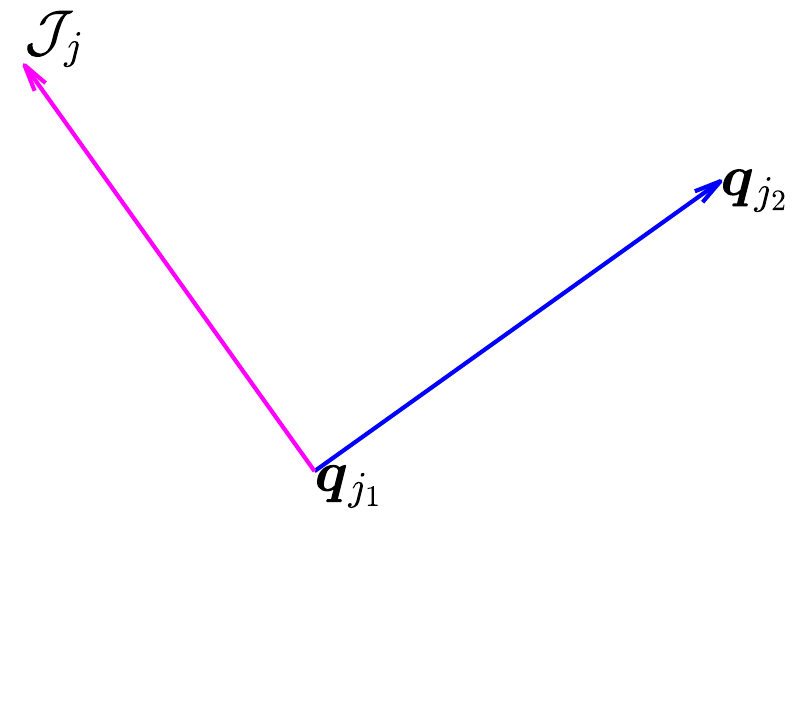}\includegraphics[width=0.5\textwidth]{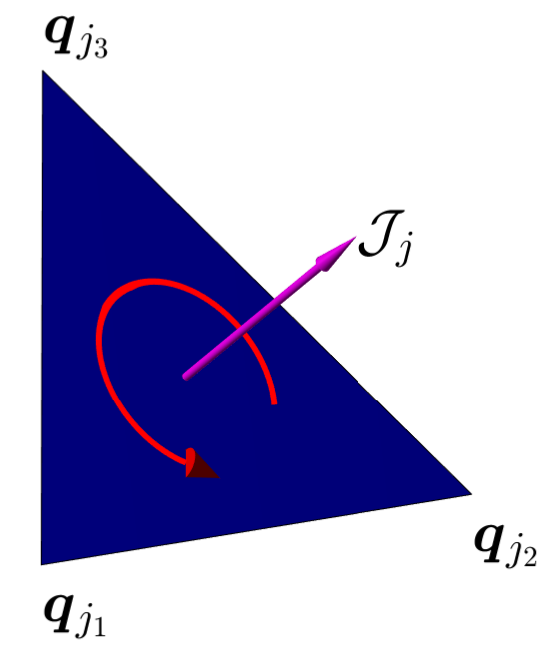}
\caption{Plot of the direction vector $\mathcal{J}$ for  2D (left) and for 3D (right).}
\label{fig: illu}
\end{figure}

Employing the direction vector $\mathcal{J}\{\sigma_j\}$ enables the representation of the area $|\sigma_j|$ and outward normal vector $\boldsymbol{n}_j$ for each $(d-1)$-simplex $\sigma_j$ as follows:
\begin{equation}\label{eq: normal vector}
  |\sigma_j|:=\frac{1}{d-1}|\mathcal{J}\{\sigma_j\}|, \qquad \boldsymbol{n}_j:=\frac{\mathcal{J}\{\sigma_j\}}{|\mathcal{J}\{\sigma_j\}|}.
\end{equation}

The finite element space for $\Gamma^m$ is defined as
\begin{equation}\label{eq: def of K space}
  \mathbb{K}^m=\mathbb{K}(\Gamma^m):=\Bigl\{u\in C(\Gamma^m)\Big|\, u|_{\sigma_{j}^m}\in \mathcal{P}^1(\sigma_{j}^m),\, \forall 1\leq j\leq J\Bigr\},
\end{equation}
here $\mathcal{P}^1(\sigma_{j}^m)$ is the set of polynomials on $\sigma_{j}^m$ with degree no higher than $1$. For $u, v\in \mathbb{K}^m$, the mass-lumped inner product $(u,v)^h_{\Gamma^m}$ is defined as
\begin{equation}\label{eq: mass lumped inner product}
  \left(u, v\right)_{\Gamma^m}^h:=\frac{1}{d}\sum_{j=1}^J\sum_{i=1}^d|\sigma_j^m|\,u((\boldsymbol{q}_{j_i}^m)^-)\,v((\boldsymbol{q}_{j_i}^m)^-),
\end{equation}
where $u((\boldsymbol{q}_{j_i}^m)^-)=\lim\limits_{\substack{\boldsymbol{q}\to \boldsymbol{q}_{j_i}^m\\ \boldsymbol{q}\in \sigma_j^m}}u(\boldsymbol{q})$ and $|\sigma_j^m|:=\frac{1}{d-1}|\mathcal{J}\{\sigma_j^m\}|$. This definition holds true for $[\mathbb{K}^m]^d$, $[\mathbb{K}^m]^{d\times d}$, and applies to the piecewise constant functions as well. Similar to the continuous situation, we emphasize  the mass-lumped inner product for two matrix-valued functions $\boldsymbol{U}, \boldsymbol{V}$ as follows
\begin{equation}\label{eq: mass lumped inner product, matrix}
  \langle\boldsymbol{U}, \boldsymbol{V}\rangle_{\Gamma^m}^h:=\frac{1}{d}\sum_{j=1}^J\sum_{i=1}^d|\sigma_{j}^m|\,\boldsymbol{U}((\boldsymbol{q}_{j_i}^m)^-):\,\boldsymbol{V}((\boldsymbol{q}_{j_i}^m)^-).
\end{equation}

Suppose the initial closed orientable $C^2$-evolving curve/surface $\Gamma_0$ is approximated by the closed orientable $\Delta$-complex $\Gamma^0=\cup_{j=1}^J \sigma_j^0$ with $\sigma_j^0=[\boldsymbol{q}_{j_1}^0, \ldots, \boldsymbol{q}_{j_d}^0]$. Applying backward-Euler discretization in time and the PFEM discretization in space to the unified weak form \eqref{eq: weak continuous}, a semi-implicit unified SP-PFEM for anisotropic surface diffusion is derived for both $d=2$ and $d=3$, as follows:

 For each $m=0,1,2,\ldots$, find the solution $(\boldsymbol{X}^{m+1}, \mu^{m+1})\in [\mathbb{K}^m]^d\times \mathbb{K}^m$ such that 
\begin{subequations}\label{eq: full PFEM}
\begin{align}
\label{eq: full PFEM 1}
&\left(\frac{\boldsymbol{X}^{m+1}-\boldsymbol{X}^m}{\tau}\cdot \boldsymbol{n}^{m+\frac{1}{2}}, \phi\right)_{\Gamma^m}^h+\left(\nabla_{\Gamma}\mu^{m+1}, \nabla_{\Gamma} \phi\right)_{\Gamma^m}^h=0, \quad \forall \phi\in \mathbb{K}^m,\\
\label{eq: full PFEM 2}
&\left(\mu^{m+1}\boldsymbol{n}^{m+\frac{1}{2}}, \boldsymbol{\omega}\right)_{\Gamma^m}^h-\langle \boldsymbol{G}_k(\boldsymbol{n}^m)\nabla_{\Gamma} \boldsymbol{X}^{m+1}, \nabla_{\Gamma} \boldsymbol{\omega}\rangle_{\Gamma^m}^h=0, \quad \forall \boldsymbol{\omega}\in [\mathbb{K}^m]^d.
\end{align}
\end{subequations}
Here $\boldsymbol{X}^m(\boldsymbol{q}_{j_i}^m)=\textbf{id}(\boldsymbol{q}_{j_i}^m)=\boldsymbol{q}_{j_i}^m$, the vertex $\boldsymbol{q}_{j_i}^{m+1}:=\boldsymbol{X}^{m+1}(\boldsymbol{q}_{j_i}^m)$, the $(d-1)$ simplex $\sigma_j^{m+1}$ is given by $\sigma_j^{m+1}:=[\boldsymbol{q}_{j_1}^{m+1}, \ldots, \boldsymbol{q}_{j_d}^{m+1}]=\boldsymbol{X}^{m+1}(\sigma_j^m)$, and the closed orientable $\Delta$-complex $\Gamma^{m+1}$ is given by $\cup_{j=1}^J \sigma_j^{m+1}=\boldsymbol{X}^{m+1}(\Gamma^m)$. 

The discretized surface gradient operator $\nabla_{\Gamma}$ for a $1$-simplex $\sigma=[\boldsymbol{q}_1, \boldsymbol{q}_2]$ in 2D becomes
\begin{equation}\label{eq: def of surface gradient dis 1 2d}
  \nabla_{\Gamma} f|_{\sigma}:=\left(f(\boldsymbol{q}_{2})-f(\boldsymbol{q}_{1})\right) \frac{\boldsymbol{q}_{2}-\boldsymbol{q}_{1}}{|\sigma|^2},\qquad \forall f \in \mathcal{P}^1(\sigma).
\end{equation}
And for a $2$-simplex $\sigma=[\boldsymbol{q}_{1}, \boldsymbol{q}_2, \boldsymbol{q}_3]$ in 3D, it is
\begin{align}\label{eq: def of surface gradient dis 1}
  \nabla_{\Gamma} f|_{\sigma}:=&\left[f(\boldsymbol{q}_{1})(\boldsymbol{q}_{2}-\boldsymbol{q}_{3})+f(\boldsymbol{q}_{2})
  (\boldsymbol{q}_{3}-\boldsymbol{q}_{1})+f(\boldsymbol{q}_{3})(\boldsymbol{q}_{1}-\boldsymbol{q}_{2})\right]\nonumber\\
    &\times \frac{\boldsymbol{n}}{2|\sigma|},\qquad \forall f \in \mathcal{P}^1(\sigma).
\end{align}
The surface Jacobian for a vector-valued function $\boldsymbol{f}=(f_1, f_2, \ldots, f_d)^T$ is
\begin{equation}\label{eq: def of surface jacob dis}
    \nabla_{\Gamma} \boldsymbol{f}|_{\sigma}:= \begin{bmatrix}(\nabla_{\Gamma} f_1)^T\\\vdots\\(\nabla_{\Gamma} f_d)^T\end{bmatrix} \qquad \forall \boldsymbol{f} \in [\mathcal{P}^1(\sigma)]^d.
\end{equation}
The vector $\boldsymbol{n}^{m+\frac{1}{2}}$ is determined as \cite{bao2021structurepreserving}
\begin{equation}
  \boldsymbol{n}^{m+\frac{1}{2}}|_{\sigma_j^m}:=\begin{cases}
      \frac{1}{2}\frac{1}{|\sigma_j^m|}(\mathcal{J}\{\sigma_j^m\}+\mathcal{J}\{\sigma_j^{m+1}\}),&d=2,\\
      \frac{\mathcal{J}\{\sigma_j^m\}+4\mathcal{J}\{\sigma_j^{m+\frac{1}{2}}\}+\mathcal{J}\{\sigma_j^{m+1}\}}
      {12|\sigma_j^m|}, & d=3;
    \end{cases}
\end{equation}
where $\sigma_j^{m+\frac{1}{2}}:=\frac{1}{2}(\sigma_j^m+\sigma_j^{m+1})=\Big[\frac{\boldsymbol{q}_{j_1}^m+\boldsymbol{q}_{j_1}^{m+1}}{2}, \ldots, \frac{\boldsymbol{q}_{j_d}^m+\boldsymbol{q}_{j_d}^{m+1}}{2}\Big]$.

\begin{remark}
The Newton's method is utilized to numerically solve the semi-implicit unified SP-PFEM \eqref{eq: full PFEM}. Notably, the nonlinearity is exclusively attributed to $\boldsymbol{n}^{m+\frac{1}{2}}$, a semi-implicit approximation of $\boldsymbol{n}$ introduced in \cite{bao2021structurepreserving}. This smart approximation is crucial for preserving the exact volume conservation at the full-discretized level. The other terms, especially the integration domain $\Gamma^m$, are explicitly defined. As a result, the unified SP-PFEM \eqref{eq: full PFEM} achieves high performance in practical computation.
\end{remark}

\subsection{Main result}
 Suppose the enclosed volume and surface energy for the solution $\Gamma^m=\cup_{j=1}^J \sigma_j^m$ of \eqref{eq: full PFEM} to be $V^m$ and $W^m$, respectively, which are given as
\begin{subequations}\label{eq: full-dis geo}
\begin{align}
\label{eq: full-dis volume}
&V^m:=\frac{1}{d}\left(\boldsymbol{X}^m, \boldsymbol{n}^m\right)_{\Gamma^m}^h=\frac{1}{d^2}\sum_{j=1}^J\sum_{i=1}^d|\sigma_j^m|\,\boldsymbol{q}_{j_i}^m\cdot \boldsymbol{n}_j^m,\\
\label{eq: full-dis energy}
&W^m:=\left(\gamma(\boldsymbol{n}^m), 1\right)^h_{\Gamma^m}=\sum_{j=1}^J |\sigma_j^m|\gamma(\boldsymbol{n}_j^m).
\end{align}
\end{subequations}
Our main result is the structure-preserving property of the unified SP-PFEM \eqref{eq: full PFEM}:

\begin{theorem}[structure-preserving]\label{thm: main}
Consider dimensions $d=2, 3$. For any $\gamma(\boldsymbol{n})$ satisfying \eqref{eq: energy stable condition}, the unified SP-PFEM \eqref{eq: full PFEM} is volume conservative and unconditional energy dissipative with sufficiently large $k(\boldsymbol{n})$, i.e.
\begin{subequations}\label{eq: full-dis geo preserve}
\begin{align}
\label{eq: full-dis volume conservation}
&V^{m+1}=V^m=\ldots=V^0,\\
\label{eq: full-dis energy dissipation}
&W^{m+1}\leq W^m\leq \ldots\leq W^0, \qquad \forall m=0,1,\ldots
\end{align}
\end{subequations}
\end{theorem}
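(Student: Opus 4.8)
The plan is to split the statement into its two assertions and prove them separately, since volume conservation holds unconditionally while energy dissipation is the delicate part requiring \eqref{eq: energy stable condition}.

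For the volume conservation \eqref{eq: full-dis volume conservation}, I would follow the now-standard argument (as in \cite{bao2021structurepreserving,bao2022symmetrized}): take $\phi \equiv 1$ in \eqref{eq: full PFEM 1}, so the second term vanishes (the surface gradient of a constant is zero), leaving $\left(\frac{\boldsymbol{X}^{m+1}-\boldsymbol{X}^m}{\tau}\cdot \boldsymbol{n}^{m+\frac12}, 1\right)^h_{\Gamma^m}=0$. The key algebraic identity is that the specially designed $\boldsymbol{n}^{m+\frac12}$ satisfies $\left(( \boldsymbol{X}^{m+1}-\boldsymbol{X}^m)\cdot \boldsymbol{n}^{m+\frac12}, 1\right)^h_{\Gamma^m} = d\,(V^{m+1}-V^m)$; this is exactly the reason for the $d=2$ trapezoidal / $d=3$ Simpson-type quadrature weights in the definition of $\boldsymbol{n}^{m+\frac12}$, and it can be verified element-by-element by expanding $\mathcal{J}\{\sigma_j^{m+1}\}$ as a multilinear (degree $d-1$) function along the segment joining $\sigma_j^m$ to $\sigma_j^{m+1}$ and integrating exactly. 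Combining the two gives $V^{m+1}=V^m$, and induction finishes this part.

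For the energy dissipation \eqref{eq: full-dis energy dissipation}, the strategy is the \emph{local energy estimate} framework announced in the introduction. It suffices to show $W^{m+1}\le W^m$ for a single step. First, take $\phi=\mu^{m+1}$ in \eqref{eq: full PFEM 1} and $\boldsymbol{\omega}=\boldsymbol{X}^{m+1}-\boldsymbol{X}^m$ in \eqref{eq: full PFEM 2}; adding the results and using $\boldsymbol{n}^{m+\frac12}$ to cancel the mixed $\mu$-terms yields
\begin{equation*}
  \left(\nabla_\Gamma \mu^{m+1}, \nabla_\Gamma \mu^{m+1}\right)^h_{\Gamma^m}
  = -\frac{1}{\tau}\left\langle \boldsymbol{G}_k(\boldsymbol{n}^m)\nabla_\Gamma \boldsymbol{X}^{m+1}, \nabla_\Gamma(\boldsymbol{X}^{m+1}-\boldsymbol{X}^m)\right\rangle^h_{\Gamma^m}.
\end{equation*}
The left side is nonnegative, so it remains to bound the right-hand inner product from below by $W^{m+1}-W^m$ (up to the factor $1/\tau$, which then drops out). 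Because both the quadrature $\langle\cdot,\cdot\rangle^h$ and the energy $W^m$ decompose as sums over simplices, this reduces to a \emph{per-simplex} inequality: for each $j$, writing $\boldsymbol{n}_j^m$, $\boldsymbol{n}_j^{m+1}$ for the old/new unit normals and using $\nabla_\Gamma\boldsymbol{X}^{m+1}|_{\sigma_j^m} = $ (the affine map pulling $\sigma_j^m$ to $\sigma_j^{m+1}$), one wants
\begin{equation*}
  \left\langle \boldsymbol{G}_k(\boldsymbol{n}_j^m)\nabla_\Gamma \boldsymbol{X}^{m+1}, \nabla_\Gamma \boldsymbol{X}^{m+1} - I_d + \boldsymbol{n}_j^m(\boldsymbol{n}_j^m)^T\right\rangle_{\sigma_j^m}
  \ge |\sigma_j^{m+1}|\gamma(\boldsymbol{n}_j^{m+1}) - |\sigma_j^m|\gamma(\boldsymbol{n}_j^m).
\end{equation*}

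The main obstacle — and the technical heart of the paper — is proving this local inequality, and this is where \eqref{eq: energy stable condition} and the \emph{minimal stabilizing function} $k_0(\boldsymbol{n})$ enter. The idea is that the affine map between two $(d-1)$-simplices is, up to scaling, an element of $SO(d)$ acting on the tangent spaces (after reducing the normal directions appropriately), so the left-hand side can be rewritten as a quadratic form in the rotation matrix $R\in SO(d)$ carrying $\boldsymbol{n}_j^m$ to $\boldsymbol{n}_j^{m+1}$; one then needs $k(\boldsymbol{n}_j^m)$ large enough that an associated symmetric matrix (the auxiliary matrices $\tilde M$ for $d=2$ and $M$ for $d=3$, to be introduced in Section 4) is positive semidefinite uniformly over $\mathbb{S}^{d-1}$ and over $R\in SO(d)$. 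The existence of such a finite $k_0(\boldsymbol{n})$ — equivalently, that the required positive semidefiniteness can be achieved — is precisely Lemma 5.? (the compactness lemma using the representation of $SO(d)$), and it is exactly under \eqref{eq: energy stable condition} that this works. Granting the existence of $k_0(\boldsymbol{n})$ and choosing $k(\boldsymbol{n})\ge k_0(\boldsymbol{n})$, the per-simplex inequality holds, summation over $j$ gives $W^{m+1}\le W^m$, and induction completes the proof of \eqref{eq: full-dis energy dissipation}. I would defer the construction of $k_0(\boldsymbol{n})$ and the verification of the auxiliary-matrix positivity to Sections 4–5 and here only assemble the pieces.
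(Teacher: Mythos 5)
Your proposal matches the paper's own proof in both structure and substance: volume conservation is established by taking $\phi\equiv1$ in the first equation and invoking the exactness property of the quadrature weights in $\boldsymbol{n}^{m+\frac12}$ (the paper defers this to \cite{bao2021structurepreserving}); energy dissipation is obtained by choosing $\phi=\mu^{m+1}$, $\boldsymbol{\omega}=\boldsymbol{X}^{m+1}-\boldsymbol{X}^m$, reducing to the per-simplex \emph{local energy estimate} (Lemma \ref{lem: local est}), and invoking the existence of the minimal stabilizing function $k_0(\boldsymbol{n})$ via the $SO(d)$-based auxiliary-matrix analysis (Lemma \ref{lem: QR}, Lemma \ref{lem: compactness}). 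The only minor imprecision is the phrase that the simplex-to-simplex affine map ``is, up to scaling, an element of $SO(d)$''; the paper's Lemma \ref{lem: QR} actually factors it as $U$ times a lower-triangular matrix preserving $\boldsymbol{n}$, with $U\in SO(d)$, which is what the per-simplex quadratic form is ultimately expressed in — but this does not affect the correctness of your outline.
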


The proof of volume conservation for $d=2$ and $d=3$, analogous to Theorem 2.1 and 3.1 in \cite{bao2021structurepreserving}, is omitted for brevity. However, an in-depth analysis is required for the proof of unconditional energy stability in \eqref{eq: full-dis energy dissipation}, which will be addressed in the following section.

\section{Proof of unconditional energy stability}

\setcounter{equation}{0}

To prove \eqref{eq: full-dis energy dissipation}, it is important to establish the following energy estimate for the energy difference $W^{m+1}-W^m$ between two subsequent time steps:
\begin{equation}\label{eq: energy difference two steps, global}
  \langle \boldsymbol{G}_k(\boldsymbol{n}^m)\nabla_{\Gamma} \boldsymbol{X}^{m+1},\, \nabla_{\Gamma} (\boldsymbol{X}^{m+1}-\boldsymbol{X}^m)\rangle_{\Gamma^m}^h\geq W^{m+1}-W^m.
\end{equation}
We aim to demonstrate that the local version of \eqref{eq: energy difference two steps, global}, applicable between $\sigma_j^m$ and $\sigma_j^{m+1}=\boldsymbol{X}^{m+1}(\sigma_j^m)$, is valid. This concept is illustrated in Figure \ref{fig: illu_local}, where the left and right images represent the 2D and 3D cases, respectively. We name this concept as \textit{local energy estimate}, which is formulated by the following lemma:

\begin{figure}[htp!]
\centering
\includegraphics[width=0.4\textwidth]{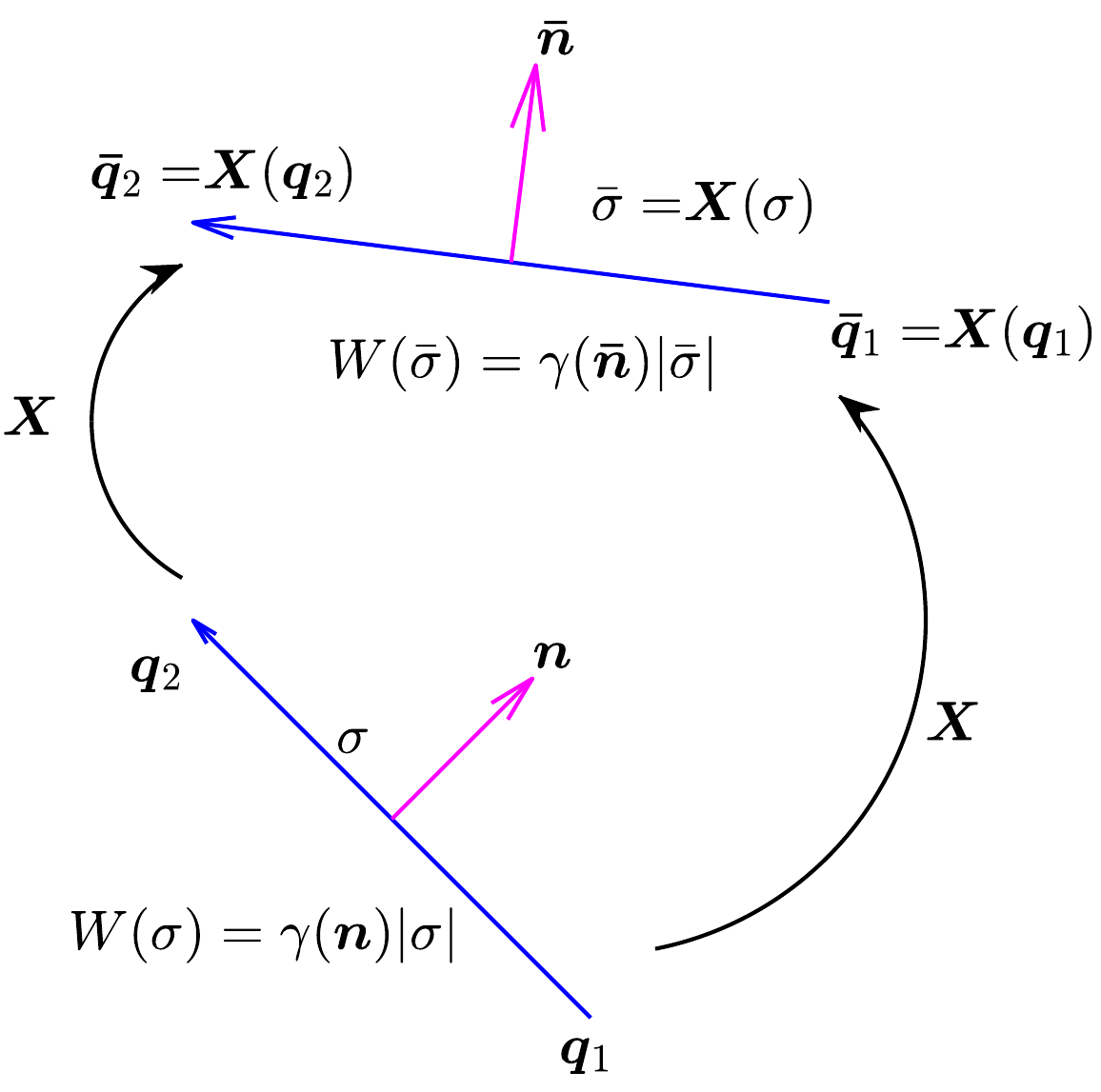}\includegraphics[width=0.6\textwidth]{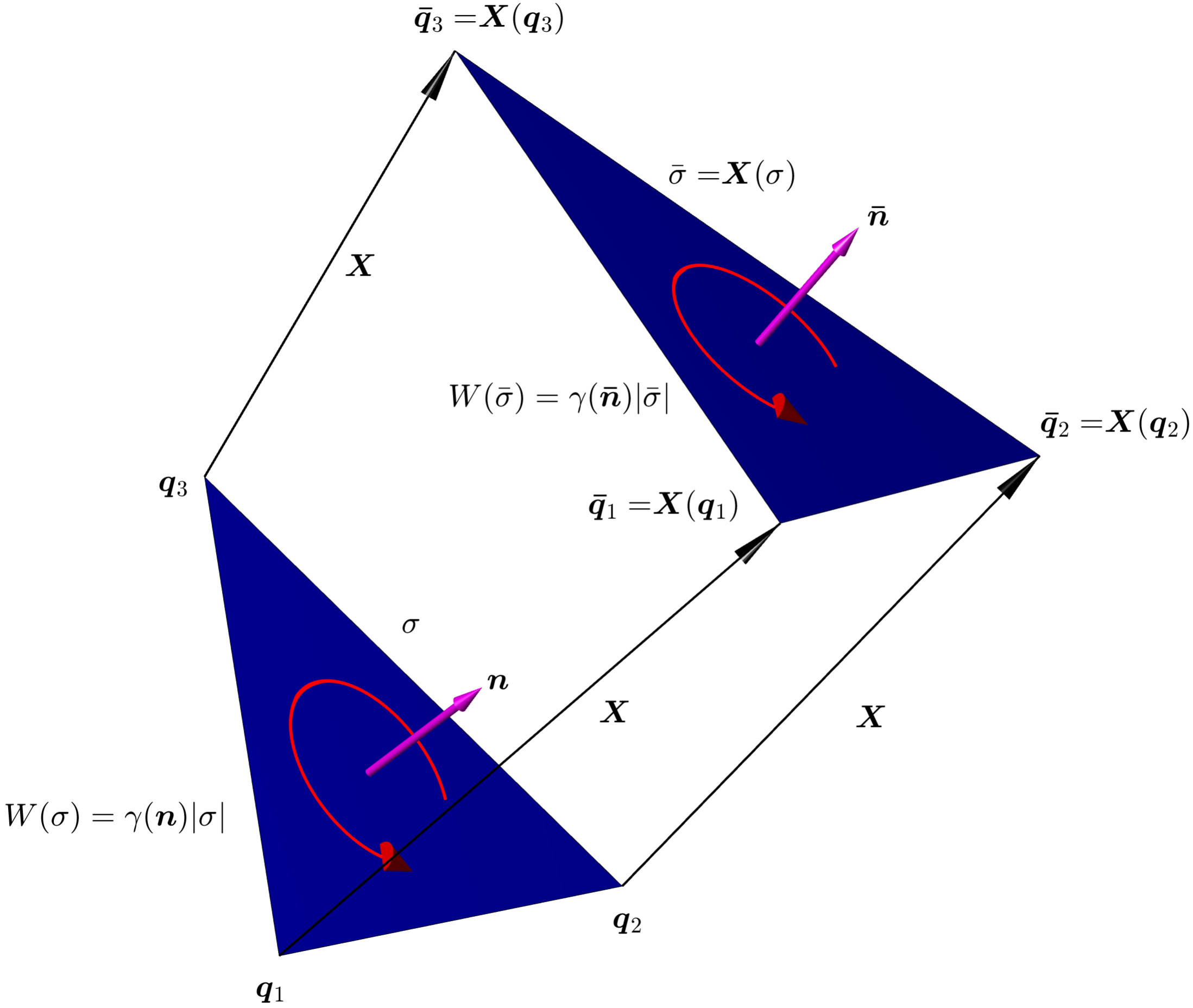}
\caption{Illustration of the local energy estimate Lemma \ref{lem: local est} for 2D (left) and for 3D (right).}
\label{fig: illu_local}
\end{figure}

\begin{lemma}[local energy estimate]\label{lem: local est}
Let $\sigma=[\boldsymbol{q}_1,  \ldots , \boldsymbol{q}_d], \bar{\sigma}=[\bar{\boldsymbol{q}}_1, \ldots, \bar{\boldsymbol{q}}_d]$ be two $(d-1)$-simplices in $\mathbb{R}^d$. Assume that $\boldsymbol{X}: \mathbb{R}^d\to \mathbb{R}^d$ is a continuous differentiable function satisfying 
\begin{equation}
    \boldsymbol{X}(\boldsymbol{q}_i)=\bar{\boldsymbol{q}}_i, \,\forall 1\leq i\leq d, \qquad \boldsymbol{X}|_{\sigma}\in [\mathcal{P}^1(\sigma)]^d.
\end{equation}
Then for dimensions $d=2, 3$ and sufficiently large $k(\boldsymbol{n})$, the following local energy estimate hold
\begin{align}\label{eq: energy difference two steps, local}
  |\sigma| \left(\boldsymbol{G}_k (\boldsymbol{n})\nabla_{\Gamma} \boldsymbol{X}|_{\sigma}\right):(\nabla_{\Gamma}\boldsymbol{X}|_{\sigma}-\nabla_{\Gamma} \textnormal{\textbf{id}}|_{\sigma})\geq \gamma(\bar{\boldsymbol{n}})|\bar{\sigma}|-\gamma(\boldsymbol{n})|\sigma|.
\end{align}
\end{lemma}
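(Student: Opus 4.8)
The plan is to reduce the local estimate \eqref{eq: energy difference two steps, local} to a pointwise algebraic inequality for $d\times d$ matrices and then to the positive semi-definiteness of an auxiliary matrix. First I would normalise: both sides of \eqref{eq: energy difference two steps, local} are unchanged under a common translation of the vertices, are multiplied by $\lambda^{d-1}$ under a common dilation by $\lambda$, and are covariant under a rotation $\boldsymbol{R}\in SO(d)$ of $\sigma$ and $\bar\sigma$ together with the replacement $\gamma\mapsto\gamma(\boldsymbol{R}^{T}\cdot)$, which leaves every hypothesis on $\gamma$ intact, in particular \eqref{eq: energy stable condition}. So I may assume $|\sigma|=1$ and $\boldsymbol{n}=\boldsymbol{e}_d$, whence $\sigma\subset\{x_d=0\}$. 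Writing $\boldsymbol{B}:=\nabla_\Gamma\boldsymbol{X}|_\sigma$, every row of $\boldsymbol{B}$ is a surface gradient, hence tangent to $\sigma$, i.e.\ orthogonal to $\boldsymbol{e}_d$, so its $d$-th column vanishes and $\boldsymbol{B}=\begin{pmatrix}\boldsymbol{B}'&\boldsymbol{0}\\\boldsymbol{b}^{T}&0\end{pmatrix}$ for arbitrary $\boldsymbol{B}'\in\mathbb{R}^{(d-1)\times(d-1)}$, $\boldsymbol{b}\in\mathbb{R}^{d-1}$, while $\nabla_\Gamma\textbf{id}|_\sigma=I_d-\boldsymbol{n}\boldsymbol{n}^{T}=\mathrm{diag}(I_{d-1},0)$.

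Next I would rewrite both sides. Expanding $\boldsymbol{G}_k(\boldsymbol{n})$ via \eqref{eq: def of Gk}--\eqref{eq: Gs and Ga} and using $\boldsymbol{B}\boldsymbol{n}=\boldsymbol{0}$, the identity $\gamma(\boldsymbol{n})=\boldsymbol{\xi}\cdot\boldsymbol{n}$, and the fact that $\boldsymbol{B}^{T}\boldsymbol{G}^{(a)}\boldsymbol{B}$ is trace-free, a short computation yields
\[
\boldsymbol{G}_k(\boldsymbol{n})\,\nabla_\Gamma\boldsymbol{X}|_\sigma:\bigl(\nabla_\Gamma\boldsymbol{X}|_\sigma-\nabla_\Gamma\textbf{id}|_\sigma\bigr)=\gamma(\boldsymbol{n})\bigl(\|\boldsymbol{B}'\|_F^{2}+|\boldsymbol{b}|^{2}-\mathrm{tr}\,\boldsymbol{B}'\bigr)-\boldsymbol{\xi}'\cdot\boldsymbol{b}+k(\boldsymbol{n})\,|\boldsymbol{b}|^{2},
\]
where $\boldsymbol{\xi}=(\boldsymbol{\xi}';\xi_d)$; in particular the stabilising term contributes exactly $k(\boldsymbol{n})|\boldsymbol{b}|^{2}\ge0$. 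For the right-hand side, from $\bar{\boldsymbol{e}}_i=\boldsymbol{B}\boldsymbol{e}_i$ for the edge vectors of $\sigma$ and the wedge-product formula \eqref{eq: wedge product} (cf.\ \cite{Barrett2020}) one obtains $|\bar\sigma|\,\bar{\boldsymbol{n}}=|\sigma|\,\mathrm{cof}(\boldsymbol{B})\boldsymbol{n}$, with $\mathrm{cof}(\boldsymbol{B})\boldsymbol{n}=\bigl(-\mathrm{cof}(\boldsymbol{B}')\boldsymbol{b}\,;\,\det\boldsymbol{B}'\bigr)$, so that by the one-homogeneity \eqref{eq: def of gamma p}, $\gamma(\bar{\boldsymbol{n}})|\bar\sigma|-\gamma(\boldsymbol{n})|\sigma|=|\sigma|\bigl(\gamma(\mathrm{cof}(\boldsymbol{B})\boldsymbol{n})-\gamma(\boldsymbol{n})\bigr)$. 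Dividing \eqref{eq: energy difference two steps, local} by $|\sigma|>0$, it becomes the algebraic inequality
\[
\gamma(\boldsymbol{n})\bigl(\|\boldsymbol{B}'\|_F^{2}-\mathrm{tr}\,\boldsymbol{B}'+1\bigr)+\bigl(\gamma(\boldsymbol{n})+k(\boldsymbol{n})\bigr)|\boldsymbol{b}|^{2}-\boldsymbol{\xi}'\cdot\boldsymbol{b}\ \ge\ \gamma\bigl(-\mathrm{cof}(\boldsymbol{B}')\boldsymbol{b}\,;\,\det\boldsymbol{B}'\bigr),
\]
required for all $\boldsymbol{B}'\in\mathbb{R}^{(d-1)\times(d-1)}$, $\boldsymbol{b}\in\mathbb{R}^{d-1}$ and $k(\boldsymbol{n})$ sufficiently large.

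To establish this reduced inequality I would treat the base configuration $\boldsymbol{b}=\boldsymbol{0}$ first: the claim there is $\gamma(\boldsymbol{n})\bigl(\|\boldsymbol{B}'\|_F^{2}-\mathrm{tr}\,\boldsymbol{B}'+1\bigr)\ge|\det\boldsymbol{B}'|\,\gamma\bigl(\mathrm{sgn}(\det\boldsymbol{B}')\,\boldsymbol{n}\bigr)$; for $\det\boldsymbol{B}'\ge0$ it follows from the elementary sum-of-squares bound $\|\boldsymbol{B}'\|_F^{2}+1\ge\mathrm{tr}\,\boldsymbol{B}'+\det\boldsymbol{B}'$ (valid for $(d-1)\times(d-1)$ matrices when $d\le3$, through the singular values), and for $\det\boldsymbol{B}'<0$ it uses in addition $\gamma(-\boldsymbol{n})<(5-d)\gamma(\boldsymbol{n})$ from \eqref{eq: energy stable condition}, which supplies exactly the slack needed in dimensions $d=2,3$ and is tight when $d=2$. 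For general $\boldsymbol{b}$ I would Taylor-expand the right-hand side about $(\boldsymbol{0};\det\boldsymbol{B}')$ — admissible because $\gamma\in C^{2}(\mathbb{R}^{d}_*)$ — getting a term linear in $\boldsymbol{b}$ with gradient $\boldsymbol{\xi}\bigl(\mathrm{sgn}(\det\boldsymbol{B}')\boldsymbol{n}\bigr)$ and an $O(|\boldsymbol{b}|^{2})$ remainder, then complete the square in $\boldsymbol{b}$ against $(\gamma(\boldsymbol{n})+k(\boldsymbol{n}))|\boldsymbol{b}|^{2}$ and absorb the residual linear part into the slack of the base case. Packaged this way, the inequality is equivalent to the positive semi-definiteness of a symmetric matrix built from $\gamma(\boldsymbol{n})$, $\boldsymbol{\xi}$ and $k(\boldsymbol{n})$ — which is precisely how the minimal stabilising function $k_0(\boldsymbol{n})$ is to be defined — and one then chooses $k(\boldsymbol{n})\ge k_0(\boldsymbol{n})$.

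The main obstacle is the uniformity. For $d=3$ the first-order term of $\gamma(\mathrm{cof}(\boldsymbol{B})\boldsymbol{n})$ in $\boldsymbol{b}$ carries the factor $\mathrm{cof}(\boldsymbol{B}')$ rather than $I_{d-1}$, so it does not cancel $-\boldsymbol{\xi}'\cdot\boldsymbol{b}$ on the nose (for $d=2$, $\mathrm{cof}(\boldsymbol{B}')\equiv1$ and the cancellation is exact, which is why the $2$D case of \cite{bao2022structure} is easier); consequently the several remainders — including those near degenerate $\bar\sigma$ — must all be dominated by a \emph{single} finite $k_0(\boldsymbol{n})$ depending only on $\boldsymbol{n}$ and $\gamma$, i.e.\ uniformly in $\boldsymbol{B}'$ and $\boldsymbol{b}$. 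Proving $k_0(\boldsymbol{n})<\infty$, equivalently the positive semi-definiteness of that auxiliary matrix for $k$ large, is exactly the existence statement established in Section 5 by reducing it to a compactness argument and invoking the representation of $SO(d)$ (Lemma~\ref{lem: compactness}); granting it, taking $k(\boldsymbol{n})\ge k_0(\boldsymbol{n})$ gives \eqref{eq: energy difference two steps, local}.
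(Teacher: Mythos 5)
Your overall plan — normalize $\boldsymbol{n}=\boldsymbol{e}_d$ and $|\sigma|=1$, reduce \eqref{eq: energy difference two steps, local} to a pointwise algebraic inequality in the surface Jacobian $\boldsymbol{B}=\nabla_\Gamma\boldsymbol{X}|_\sigma$, identify that inequality with the condition used to define $k_0(\boldsymbol{n})$, and invoke the existence of $k_0$ — is the same strategy the paper follows, and your intermediate computations (the block form of $\boldsymbol{B}$, the cofactor identity $|\bar\sigma|\bar{\boldsymbol{n}}=|\sigma|\,\mathrm{cof}(\boldsymbol{B})\boldsymbol{n}$, the simplification of the left-hand side and the $\boldsymbol{b}=\boldsymbol{0}$ special case) are correct. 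The gap is in the last step. The reduced inequality you derive is phrased in the variables $(\boldsymbol{B}',\boldsymbol{b})\in\mathbb{R}^{(d-1)\times(d-1)}\times\mathbb{R}^{d-1}$, whereas \eqref{eq: def of k0} is phrased in terms of $U\in SO(d)$ and a lower triangular $L$ with positive diagonal, and you never actually carry out the change of variables between them. Your ``Taylor-expand and complete the square in $\boldsymbol{b}$'' device does not supply this bridge: it is local, not uniform in $(\boldsymbol{B}',\boldsymbol{b})$, and near $\det\boldsymbol{B}'=0$ the basepoint of the expansion is the origin, where $\gamma$ is only assumed $C^2$ on $\mathbb{R}^d_*$. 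The missing device is exactly the paper's Lemma~\ref{lem: QR}: a QR factorization of the affine map taking $\sigma$ to $\bar\sigma$, giving $\nabla_\Gamma\boldsymbol{X}|_\sigma=U\bigl(\sum_{1\le j\le i\le d-1}l_{ij}\boldsymbol{\tau}_i\boldsymbol{\tau}_j^T+\boldsymbol{n}\boldsymbol{n}^T\bigr)\nabla_\Gamma \textnormal{\textbf{id}}|_\sigma$ with $U\in SO(d)$, $l_{ii}>0$, under which $\gamma(\bar{\boldsymbol{n}})|\bar\sigma|/|\sigma|=\gamma(U\boldsymbol{n})\prod_i l_{ii}$ and the left-hand side becomes $\mathrm{Tr}\bigl(L^T(P_{k(\boldsymbol{n})}(U,\boldsymbol{n})L-Q(U,\boldsymbol{n}))\bigr)$; this reproduces \eqref{eq: def of k0} verbatim, after which Theorem~\ref{thm: existence of k0} and the choice $k(\boldsymbol{n})\ge k_0(\boldsymbol{n})$ finish the proof exactly as you indicate. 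Replace the heuristic expansion with this QR change of variables and the argument is complete.
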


\begin{remark}\label{remark: 4.1}
It is worthwhile to mention a necessary condition for the local energy estimate \eqref{eq: energy difference two steps, local}. Let $\boldsymbol{X}=-p\, \textnormal{\textbf{id}}$ with $p>0$ in \eqref{eq: energy difference two steps, global}, then it is easy to verify that $\bar{\boldsymbol{n}}=-\boldsymbol{n}$, $\nabla_\Gamma \boldsymbol{X}=-p\nabla_\Gamma \textnormal{\textbf{id}}$, and $|\bar{\sigma}|=p^{d-1}|\sigma|>0$. In this special case, the local energy estimate \eqref{eq: energy difference two steps, local} gives

\begin{align}\label{eq: energy difference two steps, local necessary}
 |\sigma| \left(\boldsymbol{G}_k (\boldsymbol{n})\nabla_{\Gamma} \boldsymbol{X}|_{\sigma}\right):(\nabla_{\Gamma}\boldsymbol{X}|_{\sigma}-\nabla_{\Gamma} \textnormal{\textbf{id}}|_{\sigma})&\geq \gamma(\bar{\boldsymbol{n}})|\bar{\sigma}|-\gamma(\boldsymbol{n})|\sigma|\nonumber\\
 \Longleftrightarrow \quad p(p+1)|\sigma| \left(\boldsymbol{G}_k (\boldsymbol{n})\nabla_{\Gamma} \textnormal{\textbf{id}}|_{\sigma}\right):\nabla_{\Gamma} \textnormal{\textbf{id}}|_{\sigma}&\geq \gamma(-\boldsymbol{n})p^{d-1}|\bar{\sigma}|-\gamma(\boldsymbol{n})|\sigma|\nonumber\\
  \Longleftrightarrow \qquad \quad\qquad \qquad \quad\,\,(d-1)p(p+1)\gamma(\boldsymbol{n})&\geq \gamma(-\boldsymbol{n})p^{d-1}-\gamma(\boldsymbol{n}).
\end{align}
If $d=2$, by taking $p=1$, \eqref{eq: energy difference two steps, local necessary} implies $\gamma(-\boldsymbol{n})\leq 3\gamma(\boldsymbol{n})$; and if $d=3$, by taking the limit $p\to\infty$, \eqref{eq: energy difference two steps, local necessary} implies $\gamma(-\boldsymbol{n})\leq 2\gamma(\boldsymbol{n})$. Therefore, our energy-stable condition \eqref{eq: energy stable condition} is almost necessary to the local energy estimate!
\end{remark}

\subsection{The minimal stabilizing function}
For any unit normal vector $\boldsymbol{n}\in \mathbb{S}^{d-1}$, it can be assigned with $d-1$ unit vectors $\boldsymbol{\tau}_1, \boldsymbol{\tau}_2, \ldots, \boldsymbol{\tau}_{d-1}\in \mathbb{S}^{d-1}$, such that $\{\boldsymbol{\tau}_1, \boldsymbol{\tau}_2, \ldots, \boldsymbol{\tau}_{d-1}, \boldsymbol{n}\}$ form an orthonormal basis and $\det \begin{bmatrix}\boldsymbol{\tau}_1, \boldsymbol{\tau}_2, \ldots, \boldsymbol{\tau}_{d-1}, \boldsymbol{n}\end{bmatrix}=1$. We thus define two auxiliary $(d-1)\times (d-1)$ matrices $P_{\alpha}(U, \boldsymbol{n}), Q(U, \boldsymbol{n})$ for any $U\in SO(d)$ and $\alpha\in \mathbb{R}_{\geq 0}$ as follows
\begin{eqnarray}
    &&P_{\alpha}(U, \boldsymbol{n}):=\gamma(\boldsymbol{n})I_{_{d-1}}+\Bigl(\alpha(U\boldsymbol{\tau}_i\cdot \boldsymbol{n})(U\boldsymbol{\tau}_j\cdot \boldsymbol{n})\Bigr)_{1\leq i,j\leq d-1},\\
    &&Q(U, \boldsymbol{n}):=\Bigl(\gamma(\boldsymbol{n})(U\boldsymbol{\tau}_i\cdot \boldsymbol{\tau}_j)+(U\boldsymbol{\tau}_i\cdot \boldsymbol{n})(\boldsymbol{\tau}_j\cdot \boldsymbol{\xi})\Bigr)_{1\leq i,j\leq d-1},
\end{eqnarray}
 where $i$ is the row index and $j$ is the column index. Here we adopt $U\boldsymbol{\tau}_i\cdot \boldsymbol{n}:=(U\boldsymbol{\tau}_i)\cdot \boldsymbol{n}$ for simplicity without confusion.

By utilizing the auxiliary matrix $P_{\alpha}, Q$, we thus define the unified minimal stabilizing function $k_0(\boldsymbol{n})$ as
\begin{equation}\label{eq: def of k0}
    k_0(\boldsymbol{n}):=\inf\left\{
\alpha \Biggm| \begin{array}{l}
\text{Tr}\left(L^T(P_{\alpha}(U, \boldsymbol{n})L-Q(U, \boldsymbol{n}))\right)\geq \gamma(U\boldsymbol{n})\prod\limits_{i=1}^{d-1}l_{ii}-\gamma(\boldsymbol{n})\\
\forall U\in SO(d), \forall L=(l_{ij})_{_{1\leq j\leq i\leq d-1}} \text{ lower triangular}, l_{ii}>0
\end{array}
\right\}.
\end{equation}

The following theorem ensures the existence of $k_0(\boldsymbol{n})$.
\begin{theorem}\label{thm: existence of k0}
 For any $\gamma(\boldsymbol{n})$ satisfying \eqref{eq: energy stable condition}, the minimal stabilizing function $k_0(\boldsymbol{n})$, as given in \eqref{eq: def of k0}, is well-defined for $d=2$ and $d=3$.
\end{theorem}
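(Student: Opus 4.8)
The plan is to recast the inequality defining $k_0(\boldsymbol{n})$ in \eqref{eq: def of k0} as the positive semi-definiteness of an explicit symmetric matrix in $\alpha$, $U$, $\boldsymbol{n}$, and then to exhibit one admissible $\alpha$ by a compactness argument over $SO(d)$. For fixed $\boldsymbol{n}$, $U\in SO(d)$ and $\alpha\geq 0$, the quantity $\mathrm{Tr}\!\left(L^T(P_{\alpha}(U,\boldsymbol{n})L-Q(U,\boldsymbol{n}))\right)-\gamma(U\boldsymbol{n})\prod_{i=1}^{d-1}l_{ii}+\gamma(\boldsymbol{n})$ is, as a function of the lower-triangular $L$, a quadratic form in the entries of $L$ plus a linear term plus the constant $\gamma(\boldsymbol{n})$: indeed $P_{\alpha}=\gamma(\boldsymbol{n})I_{d-1}+\alpha\,\boldsymbol{v}\boldsymbol{v}^T$ with $\boldsymbol{v}=\boldsymbol{v}(U,\boldsymbol{n})=\bigl(U\boldsymbol{\tau}_i\cdot\boldsymbol{n}\bigr)_{1\leq i\leq d-1}$, and the only possibly non-quadratic piece $\gamma(U\boldsymbol{n})\prod_i l_{ii}$ is linear in $l_{11}$ when $d=2$ and equals $\gamma(U\boldsymbol{n})\det L=\tfrac12\gamma(U\boldsymbol{n})\,\mathrm{Tr}(L\,\mathrm{adj}\,L)$, hence again quadratic, when $d=3$. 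Appending a coordinate equal to $1$ to the vector $\ell$ of entries of $L$, the whole expression becomes $\hat{\ell}^{\,T}M_{\alpha}(U,\boldsymbol{n})\hat{\ell}$ for an explicit symmetric matrix of size $\tfrac{d(d-1)}{2}+1$ (the matrices $\tilde M$ for $d=2$ and $M$ for $d=3$). Consequently, if $M_{\alpha}(U,\boldsymbol{n})\succeq 0$ for every $U\in SO(d)$, the defining inequality holds for all $L$, in particular for all lower-triangular $L$ with $l_{ii}>0$, so $\alpha$ lies in the set whose infimum is $k_0(\boldsymbol{n})$; since that set is contained in $\mathbb{R}_{\geq 0}$, producing one such $\alpha$ gives the well-definedness.

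Next I would write $M_{\alpha}(U,\boldsymbol{n})=M_0(U,\boldsymbol{n})+\alpha N(U,\boldsymbol{n})$, where $N(U,\boldsymbol{n})\succeq 0$ is the matrix of the quadratic form $L\mapsto\alpha\|L^T\boldsymbol{v}\|^2$ and $M_0$ collects the $\alpha$-independent part. By elementary linear algebra, $M_0+\alpha N\succeq 0$ for some $\alpha\geq 0$ if and only if the restriction of $M_0$ to $\ker N$ is positive semi-definite. Now $\ker N$ always contains the appended constant direction (on which $M_0$ acts by the scalar $\gamma(\boldsymbol{n})>0$), and $N$ vanishes identically precisely when $\boldsymbol{v}=\boldsymbol{0}$, i.e. when $U$ preserves the tangent hyperplane $\boldsymbol{n}^\perp$, i.e. when $U\boldsymbol{n}=\pm\boldsymbol{n}$. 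On this degenerate subset of $SO(d)$ one evaluates $M_0(U,\boldsymbol{n})$ explicitly, using $\gamma(\boldsymbol{n})=\boldsymbol{\xi}\cdot\boldsymbol{n}$ and, where needed, a $C^2$-Taylor expansion of the one-homogeneous $\gamma$ (here the regularity $\gamma\in C^2(\mathbb{R}^d_*)$ from \eqref{eq: energy stable condition} is used): for $U\boldsymbol{n}=\boldsymbol{n}$ the quadratic form is $\gamma(\boldsymbol{n})\|L-I_{d-1}\|_F^2\geq 0$, hence automatically PSD, while for $U\boldsymbol{n}=-\boldsymbol{n}$ the PSD condition reduces exactly to the scalar inequality $(d-1)p(p+1)\gamma(\boldsymbol{n})\geq\gamma(-\boldsymbol{n})p^{d-1}-\gamma(\boldsymbol{n})$ of Remark \ref{remark: 4.1}, i.e. to $\gamma(-\boldsymbol{n})\leq(5-d)\gamma(\boldsymbol{n})$; the strict form in \eqref{eq: energy stable condition} then leaves a genuine gap there.

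The main obstacle is upgrading these pointwise-in-$U$ facts to a single $\alpha$ valid for all $U\in SO(d)$: compactness of $SO(d)$ alone does not suffice, since the threshold $U\mapsto\inf\{\alpha:M_0(U,\boldsymbol{n})+\alpha N(U,\boldsymbol{n})\succeq 0\}$ is only lower semicontinuous and could a priori blow up as $U$ approaches the degenerate set, where $N$ collapses. The remedy is a structural cancellation, which is the content of Lemma \ref{lem: compactness} and is where the explicit representation of $SO(d)$ enters (the rotation angle for $d=2$; an axis--angle or Euler parametrization for $d=3$): near the degenerate set the negative directions of $M_0$ lying in the shrinking kernel of $N$ are of the \emph{same} quadratic order in the distance to that set as the positive part supplied by $N$, and the strict inequality \eqref{eq: energy stable condition} gives room to dominate them. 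Concretely I would argue by contradiction: if no finite $\alpha$ worked, extract $U_n\to U^\ast$ in $SO(d)$ and unit vectors $\hat{\ell}_n\to\hat{\ell}^\ast$ with $\hat{\ell}_n^{\,T}(M_0(U_n,\boldsymbol{n})+nN(U_n,\boldsymbol{n}))\hat{\ell}_n<0$; boundedness of $M_0$ over $SO(d)$ forces $\hat{\ell}_n^{\,T}N(U_n,\boldsymbol{n})\hat{\ell}_n\to 0$ and hence $N(U^\ast,\boldsymbol{n})\hat{\ell}^\ast=0$, and the $C^2$-expansion of $M_0$ around the degenerate set together with \eqref{eq: energy stable condition} then yields $\hat{\ell}_n^{\,T}M_0(U_n,\boldsymbol{n})\hat{\ell}_n\geq -C\,\hat{\ell}_n^{\,T}N(U_n,\boldsymbol{n})\hat{\ell}_n+o\bigl(\hat{\ell}_n^{\,T}N(U_n,\boldsymbol{n})\hat{\ell}_n\bigr)$, contradicting the displayed strict negativity once $n>C$.

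Combining these steps yields a finite $\alpha_0=\alpha_0(\boldsymbol{n})\geq 0$ with $M_{\alpha_0}(U,\boldsymbol{n})\succeq 0$ for every $U\in SO(d)$, hence a nonempty admissible set in \eqref{eq: def of k0} contained in $\mathbb{R}_{\geq 0}$, so its infimum $k_0(\boldsymbol{n})$ is finite, which is the asserted well-definedness for $d=2,3$. I expect the genuinely hard work to sit in the case $d=3$: there the degenerate set is a one-parameter family of rotations rather than two isolated points, the form $M_0$ is a full $4\times 4$ object, and carrying out the Taylor bookkeeping and the $SO(3)$-parametrization needed for the cancellation in Lemma \ref{lem: compactness} is substantially more delicate than the $d=2$ computation underlying Remark \ref{remark: 4.1}.
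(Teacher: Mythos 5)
Your reduction of the defining inequality in \eqref{eq: def of k0} to positive semi-definiteness of a symmetric matrix $M_\alpha(U,\boldsymbol{n})=M_0(U,\boldsymbol{n})+\alpha N(U,\boldsymbol{n})$, followed by a compactness argument over $SO(d)$, matches the paper's strategy: the paper's auxiliary matrices $\tilde M$ in \eqref{eq: def of M 2d} and $M$ in \eqref{eq: def of M} play the role of your $M_\alpha$, and Lemma~\ref{lem: compactness} encodes exactly the open-cover/compactness step. However, two intermediate claims in your write-up are incorrect, and the decisive quantitative step is asserted without a mechanism.

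First, the ``elementary linear algebra'' equivalence you invoke --- that $M_0+\alpha N\succeq 0$ for some $\alpha\geq 0$ if and only if $M_0$ restricted to $\ker N$ is PSD --- is false. Take
\begin{equation*}
M_0=\begin{bmatrix}0&1\\1&0\end{bmatrix},\qquad N=\begin{bmatrix}1&0\\0&0\end{bmatrix};
\end{equation*}
then $\ker N=\operatorname{span}(e_2)$ and $e_2^T M_0 e_2=0$, so $M_0|_{\ker N}$ is PSD, yet $\det(M_0+\alpha N)=-1$ for every $\alpha$, so $M_0+\alpha N$ is never PSD. The correct Finsler-type criterion also constrains how the null directions of $M_0|_{\ker N}$ couple to the complement, which is precisely the sort of subtlety the paper's Schur-complement computation inside Lemma~\ref{lem: compactness} is designed to handle. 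Second, your claimed closed form $\gamma(\boldsymbol{n})\|L-I_{d-1}\|_F^2$ for the quadratic form when $U\boldsymbol{n}=\boldsymbol{n}$ holds only for $d=2$. For $d=3$ at $U=I_3$, \eqref{eq: M4 basic ori} gives the form $\gamma(\boldsymbol{n})\bigl(l_{11}^2+l_{22}^2+l_{21}^2-l_{11}l_{22}-l_{11}-l_{22}+1\bigr)$, which is PSD but is not $\|L-I_2\|_F^2$; and for $U\boldsymbol{n}=\boldsymbol{n}$ with $U\neq I_3$ the form depends on the in-plane rotation angle $\psi$ as in \eqref{eq: rep of O plus}. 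Getting the degenerate set right matters because it is where the entire argument hinges.

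The genuine gap is your displayed estimate $\hat\ell_n^{\,T}M_0(U_n)\hat\ell_n\geq -C\,\hat\ell_n^{\,T}N(U_n)\hat\ell_n+o\bigl(\hat\ell_n^{\,T}N(U_n)\hat\ell_n\bigr)$ near the degenerate set. You correctly identify it as the crux, but you supply no mechanism by which $C^2$-regularity of $\gamma$ and the strict condition \eqref{eq: energy stable condition} produce it, and without that the contradiction does not close. The paper supplies precisely this missing content: it verifies condition (iii) of Lemma~\ref{lem: compactness} by, at each degenerate $U$ where $\det M_\alpha$ vanishes, computing via Jacobi's formula (Lemma~\ref{lem: jacobi}) and the explicit $SO(d)$ parametrizations (Lemmas~\ref{lem: rep 2d}, \ref{lem: rep}) that the gradient of $\det M_\alpha$ is zero there and the Hessian can be made positive definite for $\alpha$ large enough (Lemmas~\ref{lem: F3 near 0 2d}, \ref{lem: F4 near 0}); at the remaining degenerate rotations it shows $\det M_\alpha>0$ outright and appeals to continuity (Lemmas~\ref{lem: F4 and M4 2d}, \ref{lem: F4 and M4}). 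Your sketch has the right architecture and isolates the correct difficulty, but as it stands it is not a proof: the local second-order analysis at the degenerate set, which is the nontrivial part, is missing.
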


We refer to Section 5.2 for the proof of Theorem \ref{thm: existence of k0} in the case of $d=2$ and to Section 5.3 for $d=3$.

\begin{remark}\label{remark: 2.1}
In fact, similar to the 2D case in \cite{bao2022structure}, the regularity condition in \eqref{eq: energy stable condition} for Theorem 3.1 and Theorem 3.2 can be relaxed to $\gamma(\boldsymbol{p})$ is piecewise $C^2(\mathbb{R}^d_*)$. 
\end{remark}

\subsection{Proof of the local energy estimate} 

To verify the local energy estimate \eqref{eq: energy difference two steps, local}, we need to represent $\nabla_{\Gamma}\boldsymbol{X}|_{\sigma}$ appropriately.

\begin{lemma}\label{lem: QR}
Let $\sigma=[\boldsymbol{q}_1,  \ldots , \boldsymbol{q}_d], \bar{\sigma}=[\bar{\boldsymbol{q}}_1, \ldots, \bar{\boldsymbol{q}}_d]$ be two $(d-1)$-simplices, and $\{\boldsymbol{\tau}_1, \boldsymbol{\tau}_2, \ldots, \boldsymbol{\tau}_{d-1}, \boldsymbol{n}\}$ be an orthonormal basis of $\mathbb{R}^d$ with  $\det \begin{bmatrix}\boldsymbol{\tau}_1,\ldots, \boldsymbol{\tau}_{d-1},$ $\boldsymbol{n}\end{bmatrix}=1$. Then for any continuously differentiable function $\boldsymbol{X}: \mathbb{R}^d\to \mathbb{R}^d, \boldsymbol{X}|_{\sigma}\in [\mathcal{P}^1(\sigma)]^d$ satisfying $\boldsymbol{X}(\boldsymbol{q}_j)=\bar{\boldsymbol{q}}_j,\,\forall 1\leq j\leq d$, there exists a matrix $U\in SO(d)$ and a lower triangular matrix $L=(l_{ij})_{1\leq j\leq i\leq d-1}$ with $l_{ii}>0, \forall 1\leq i\leq d-1$, such that the surface Jacobian $\nabla_{\Gamma}\boldsymbol{X}|_{\sigma}$ is
\begin{equation}\label{eq: QR 2d}
  \nabla_{\Gamma} \boldsymbol{X}|_{\sigma}=U\left(\sum_{1\leq j\leq i\leq d-1}l_{ij}\boldsymbol{\tau}_i\boldsymbol{\tau}_j^T+\boldsymbol{n}\boldsymbol{n}^T\right)\nabla_{\Gamma} \textnormal{\textbf{id}}\, |_{\sigma}.
\end{equation}
Furthermore, the two $(d-1)$-simplices $\sigma$ and $\bar{\sigma}$ are related as:
\begin{equation}\label{eq: QR 2d 21}
  \bar{\boldsymbol{q}}_j-\bar{\boldsymbol{q}}_1 = U\left(\sum_{1\leq j\leq i\leq d-1}l_{ij}\boldsymbol{\tau}_i\boldsymbol{\tau}_j^T+\boldsymbol{n}\boldsymbol{n}^T\right)\left(\boldsymbol{q}_j-\boldsymbol{q}_1\right), \, \forall 1\leq j\leq d,
\end{equation}
and
\begin{equation}\label{eq: QR 2d 22}
     \bar{\boldsymbol{n}}=U\left(\sum_{1\leq j\leq i\leq d-1}l_{ij}\boldsymbol{\tau}_i\boldsymbol{\tau}_j^T+\boldsymbol{n}\boldsymbol{n}^T\right)\boldsymbol{n}=U\boldsymbol{n}.
\end{equation}

\end{lemma}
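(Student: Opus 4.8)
\textbf{Proof plan for Lemma \ref{lem: QR}.}

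The plan is to construct the matrix $U \in SO(d)$ and the lower triangular $L$ directly from a $QR$-type factorization of the ``deformation data'' of the simplex, and then verify that this factorization is consistent with the discretized surface gradient formulas \eqref{eq: def of surface gradient dis 1 2d} and \eqref{eq: def of surface gradient dis 1}. First I would record that, since $\boldsymbol{X}|_\sigma \in [\mathcal{P}^1(\sigma)]^d$ and $\boldsymbol{X}(\boldsymbol{q}_j) = \bar{\boldsymbol{q}}_j$, the surface Jacobian $\nabla_\Gamma \boldsymbol{X}|_\sigma$ is the unique linear map sending each edge vector $\boldsymbol{q}_j - \boldsymbol{q}_1$ (for $2 \le j \le d$) to $\bar{\boldsymbol{q}}_j - \bar{\boldsymbol{q}}_1$ and sending $\boldsymbol{n}$ (the unit normal of $\sigma$) to $\bar{\boldsymbol{n}}$, up to the appropriate scaling dictated by how $\nabla_\Gamma \textnormal{\textbf{id}}|_\sigma = I_d - \boldsymbol{n}\boldsymbol{n}^T$ acts. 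Concretely, I would first establish the factorization for $\textnormal{\textbf{id}}$: note $\nabla_\Gamma \textnormal{\textbf{id}}|_\sigma$ projects onto $\mathrm{span}\{\boldsymbol{\tau}_1,\dots,\boldsymbol{\tau}_{d-1}\}$, so the claimed right-hand side of \eqref{eq: QR 2d}, namely $U(\sum l_{ij}\boldsymbol{\tau}_i\boldsymbol{\tau}_j^T + \boldsymbol{n}\boldsymbol{n}^T)(I_d - \boldsymbol{n}\boldsymbol{n}^T) = U \sum_{1\le j \le i \le d-1} l_{ij}\boldsymbol{\tau}_i\boldsymbol{\tau}_j^T$, is automatically a map that kills $\boldsymbol{n}$ and acts as $U L_\tau$ on the tangent plane, where $L_\tau$ denotes the lower-triangular matrix $\sum l_{ij}\boldsymbol{\tau}_i\boldsymbol{\tau}_j^T$ expressed in the $\{\boldsymbol{\tau}_i\}$ basis.

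Next I would pin down $U$ and $L$. The images $\bar{\boldsymbol{q}}_2 - \bar{\boldsymbol{q}}_1, \ldots, \bar{\boldsymbol{q}}_d - \bar{\boldsymbol{q}}_1$ span the tangent plane of $\bar\sigma$; I would apply the (unique, with positive diagonal) $QR$ factorization to the $(d-1)\times(d-1)$ matrix of coordinates of these vectors relative to an orthonormal basis of $\mathrm{span}\{\boldsymbol{\tau}_i\}$ on the source side, obtaining an orthogonal part that, after padding with the action $\boldsymbol{n} \mapsto \bar{\boldsymbol{n}}$ on the normal direction, yields $U$; the condition $\det[\boldsymbol{\tau}_1,\ldots,\boldsymbol{\tau}_{d-1},\boldsymbol{n}] = 1$ together with orientation-consistency of the $\Delta$-complex ensures $\det U = 1$, i.e.\ $U \in SO(d)$, and the positivity of the $QR$ diagonal gives $l_{ii} > 0$. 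Once $U$ and $L$ are fixed this way, \eqref{eq: QR 2d 21} holds by construction on the edge vectors; extending it to $j = 1$ is trivial since both sides vanish. Then \eqref{eq: QR 2d 22} follows because the bracketed operator $\sum l_{ij}\boldsymbol{\tau}_i\boldsymbol{\tau}_j^T + \boldsymbol{n}\boldsymbol{n}^T$ fixes $\boldsymbol{n}$, so it sends $\boldsymbol{n} \mapsto \boldsymbol{n} \mapsto U\boldsymbol{n}$, and one checks $U\boldsymbol{n} = \bar{\boldsymbol{n}}$: indeed $U$ maps the source tangent plane to the target tangent plane orientedly, hence maps the source normal to $\pm$ the target normal, and the orientation convention forces the $+$ sign. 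Finally \eqref{eq: QR 2d} is recovered by combining \eqref{eq: QR 2d 21} with the explicit formula for $\nabla_\Gamma \boldsymbol{X}|_\sigma$ — both sides are linear maps, both annihilate $\boldsymbol{n}$, and both agree on the edge vectors $\boldsymbol{q}_j - \boldsymbol{q}_1$, so they coincide (using that the edge vectors together with $\boldsymbol{n}$ span $\mathbb{R}^d$).

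I expect the main obstacle to be the bookkeeping that reconciles the abstract $QR$ factorization with the paper's specific discrete surface-gradient formulas in $d=2$ versus $d=3$, and in particular verifying the orientation/sign claim $U\boldsymbol{n} = \bar{\boldsymbol{n}}$ rather than $U\boldsymbol{n} = -\bar{\boldsymbol{n}}$. This requires carefully tracking how the direction vector $\mathcal{J}\{\sigma\}$ (and hence $\boldsymbol{n}$ via \eqref{eq: normal vector}) is built from the ordered vertices, and confirming that the linear map taking ordered edges of $\sigma$ to ordered edges of $\bar\sigma$ has positive determinant when restricted appropriately — equivalently that $\det(\nabla_\Gamma \boldsymbol{X}|_\sigma + \boldsymbol{n}\boldsymbol{n}^T) > 0$, which is exactly the statement that $\boldsymbol{X}|_\sigma$ preserves orientation as a map $\sigma \to \bar\sigma$ of non-degenerate simplices. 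Everything else is a routine application of the uniqueness of the $QR$ decomposition with positive diagonal and of the defining property of the piecewise-linear interpolant; I would handle $d=2$ and $d=3$ in a unified way by working with edge vectors and the normal rather than arclength or explicit triangle formulas.
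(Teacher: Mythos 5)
Your plan is in the right spirit and shares the paper's central idea (construct a linear map from the edge and normal data and apply a QR-type factorization), but two steps as written do not go through, and the paper's organization avoids both pitfalls.

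First, the ``$(d-1)\times(d-1)$ matrix of coordinates of $\bar{\boldsymbol{q}}_j-\bar{\boldsymbol{q}}_1$ relative to an orthonormal basis of $\mathrm{span}\{\boldsymbol{\tau}_i\}$'' is not well-defined: those vectors lie in the tangent plane of $\bar\sigma$, not in $\mathrm{span}\{\boldsymbol{\tau}_i\}$, so they have no coordinates in that basis. You would either need to also fix an orthonormal basis of the \emph{target} tangent plane (creating a non-canonical choice to reconcile with $U$), or use a reduced $d\times(d-1)$ QR and then extend — neither of which you address. The paper sidesteps this entirely by forming the full $d\times d$ matrices $B=[\boldsymbol{q}_2-\boldsymbol{q}_1,\ldots,\boldsymbol{q}_d-\boldsymbol{q}_1,\boldsymbol{n}]$ and $\bar B$, setting $A=\bar B B^{-1}$ (so $A\boldsymbol{n}=\bar{\boldsymbol{n}}$ is built in), QR-factoring $A[\boldsymbol{\tau}_1,\ldots,\boldsymbol{\tau}_{d-1},\boldsymbol{n}]$ in $\mathbb{R}^{d\times d}$, and then proving a posteriori that the last row of the lower-triangular factor is $(0,\ldots,0,1)$; this is the content of \eqref{eq: QR 2d} rather than an augmentation-by-hand.

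Second, and more seriously, your proposed orientation criterion is wrong. You claim the required sign condition is ``$\det(\nabla_\Gamma\boldsymbol{X}|_\sigma+\boldsymbol{n}\boldsymbol{n}^T)>0$,'' but a short computation shows
\begin{equation*}
\det\bigl(\nabla_\Gamma\boldsymbol{X}|_\sigma+\boldsymbol{n}\boldsymbol{n}^T\bigr)=\det\bigl(A(I_d-\boldsymbol{n}\boldsymbol{n}^T)+\boldsymbol{n}\boldsymbol{n}^T\bigr)=(\boldsymbol{n}\cdot\bar{\boldsymbol{n}})\,\det A,
\end{equation*}
which can be negative even when $\det A>0$: take $\boldsymbol{X}=-\textnormal{\textbf{id}}$ in $d=2$, so that $\bar{\boldsymbol{n}}=-\boldsymbol{n}$, $\det A=\det(-I_2)=1>0$, yet $\det(\nabla_\Gamma\boldsymbol{X}|_\sigma+\boldsymbol{n}\boldsymbol{n}^T)=\det(-I_2+2\boldsymbol{n}\boldsymbol{n}^T)=-1<0$. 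The quantity whose positivity controls $U\in SO(d)$ and $U\boldsymbol{n}=\bar{\boldsymbol{n}}$ is $\det A=\det\bar B/\det B$, and the paper establishes $\det B=\mathcal{J}\{\sigma\}\cdot\boldsymbol{n}=(d-1)|\sigma|>0$ (and likewise for $\bar B$) directly from the wedge-product identity \eqref{eq: wedge product}. In short: your instinct that the orientation check is the crux is correct, but your proposed test is the wrong determinant, and the cleanest way to both track orientation and get $U\boldsymbol{n}=\bar{\boldsymbol{n}}$ in one stroke is the paper's route through the $d\times d$ map $A$.
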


\begin{proof}
First, we consider the two matrices
\begin{equation*}
     B=\begin{bmatrix}\boldsymbol{q}_2-\boldsymbol{q}_1, \ldots, \boldsymbol{q}_d-\boldsymbol{q}_1, \boldsymbol{n}\end{bmatrix}, \quad \bar{B}=\begin{bmatrix}\bar{\boldsymbol{q}}_2-\bar{\boldsymbol{q}}_1, \ldots, \bar{\boldsymbol{q}}_d-\bar{\boldsymbol{q}}_1, \bar{\boldsymbol{n}}\end{bmatrix}.
 \end{equation*}
By combining the property of wedge product \eqref{eq: wedge product}, \eqref{eq: def of direction d}, and \eqref{eq: normal vector}, we derive that
  \begin{equation*}
       \det B=(\boldsymbol{q}_{2}-\boldsymbol{q}_{1})\wedge\cdots\wedge (\boldsymbol{q}_{d}-\boldsymbol{q}_{1})\cdot \boldsymbol{n}=\mathcal{J}\{\sigma\}\cdot \boldsymbol{n}=(d-1)|\sigma|>0,
   \end{equation*} 
   and thus $B$ is invertible. Similarly, we have $\det \bar{B}>0$. Let $A=\bar{B}B^{-1}$, we know that $\det A=\det \bar{B}\left(\det B\right)^{-1}>0$, and
   \begin{equation}\label{eq: property of A}
       A(\boldsymbol{q}_j-\boldsymbol{q}_1)=\bar{\boldsymbol{q}}_j-\bar{\boldsymbol{q}}_1,\,\ \forall 1\leq j\leq d, \qquad A\boldsymbol{n}=\bar{\boldsymbol{n}}.
   \end{equation} Moreover, let $\boldsymbol{b}= \bar{\boldsymbol{q}}_1-A\boldsymbol{q}_1$, we obtain
   \begin{align*}
       \boldsymbol{X}(\boldsymbol{q}_j)-\left(A\boldsymbol{q}_j+\boldsymbol{b}\right)&=\bar{\boldsymbol{q}}_j-\left(A(\boldsymbol{q}_j-\boldsymbol{q}_1)+\left(A\boldsymbol{q}_1+\boldsymbol{b}\right)\right)\\
       &=\bar{\boldsymbol{q}}_j-\left(\bar{\boldsymbol{q}}_j-\bar{\boldsymbol{q}}_1+\bar{\boldsymbol{q}}_1\right)\\
       &=\boldsymbol{0}, \quad \forall 1\leq j\leq d.
   \end{align*} 
   By noting the definition of surface Jacobian for vector-valued function \eqref{eq: def of surface jacob dis}, the surface gradient \eqref{eq: def of surface gradient dis 1 2d} in 2D and \eqref{eq: def of surface gradient dis 1} in 3D, we know that the surface Jacobian of $\boldsymbol{X}$ can be formulated by $A$ as
 \begin{equation}\label{eq: affine res}
     \nabla_{\Gamma} \boldsymbol{X}|_{\sigma}=\nabla_{\Gamma}\left(A\,\textnormal{\textbf{id}}+\boldsymbol{b}\right)|_{\sigma}=A\nabla_{\Gamma}\textnormal{\textbf{id}}|_{\sigma}.
 \end{equation}

Next, since $\{\boldsymbol{\tau}_1, \boldsymbol{\tau}_2, \ldots, \boldsymbol{\tau}_{d-1}, \boldsymbol{n}\}$ is an orthonormal basis of $\mathbb{R}^d$ with \\ $\det \begin{bmatrix}\boldsymbol{\tau}_1,\ldots, \boldsymbol{\tau}_{d-1}, \boldsymbol{n}\end{bmatrix}=1$, the matrix $\begin{bmatrix}\boldsymbol{\tau}_1,\ldots, \boldsymbol{\tau}_{d-1}, \boldsymbol{n}\end{bmatrix}\in SO(d)$ is an orthogonal matrix. By adopting QR factorization for the matrix $A\begin{bmatrix}\boldsymbol{\tau}_1,\ldots, \boldsymbol{\tau}_{d-1}, \boldsymbol{n}\end{bmatrix}$, we know there exists an orthogonal matrix $Q$ and a lower triangular matrix $\hat{L}=(l_{ij})_{1\leq j\leq i\leq d}$ with $l_{ii}>0, \,\forall 1\leq i\leq d$, such that
\begin{align}\label{eq: QR for Ax}
    A&=\left(A\begin{bmatrix}\boldsymbol{\tau}_1,\ldots, \boldsymbol{\tau}_{d-1}, \boldsymbol{n}\end{bmatrix}\right)\begin{bmatrix}\boldsymbol{\tau}_1,\ldots, \boldsymbol{\tau}_{d-1}, \boldsymbol{n}\end{bmatrix}^T\nonumber\\
  &=Q\hat{L}\begin{bmatrix}\boldsymbol{\tau}_1,\ldots, \boldsymbol{\tau}_{d-1}, \boldsymbol{n}\end{bmatrix}^T\nonumber\\
  &=\left(Q\begin{bmatrix}\boldsymbol{\tau}_1,\ldots, \boldsymbol{\tau}_{d-1}, \boldsymbol{n}\end{bmatrix}^T\right)\left(\begin{bmatrix}\boldsymbol{\tau}_1,\ldots, \boldsymbol{\tau}_{d-1}, \boldsymbol{n}\end{bmatrix}\hat{L}\begin{bmatrix}\boldsymbol{\tau}_1,\ldots, \boldsymbol{\tau}_{d-1}, \boldsymbol{n}\end{bmatrix}^T\right)\nonumber\\
  &=U\left(\sum_{1\leq j\leq i\leq d-1}l_{ij}\boldsymbol{\tau}_i\boldsymbol{\tau}_j^T+\sum_{j=1}^{d-1}l_{dj}\boldsymbol{n}\boldsymbol{\tau}_j^T+l_{dd}\boldsymbol{n}\boldsymbol{n}^T\right),
\end{align}
Here $U=\left(Q\begin{bmatrix}\boldsymbol{\tau}_1,\ldots, \boldsymbol{\tau}_{d-1}, \boldsymbol{n}\end{bmatrix}^T\right)$. We know $U$ is orthogonal from the fact $Q$ and $\begin{bmatrix}\boldsymbol{\tau}_1,\ldots, \boldsymbol{\tau}_{d-1}, \boldsymbol{n}\end{bmatrix}$ are orthogonal. Moreover, by taking the determinant of each side of \eqref{eq: QR for Ax} and noticing the fact $\det A>0$, we know that $\det{U}=\frac{\det{A}}{\det{\hat{L}}}=\frac{\det{A}}{l_{11}\ldots l_{dd}}>0$. Therefore, we conclude that $U\in SO(d)$.

Comparing \eqref{eq: QR for Ax} with the desired identity \eqref{eq: QR 2d} and noting \eqref{eq: affine res}, it suffices to show that $l_{dd}=1$ and $l_{dj}=0, \,\forall 1\leq j\leq d-1$. From \eqref{eq: property of A}, $A\boldsymbol{n}=\bar{\boldsymbol{n}}$ is a unit vector, \eqref{eq: QR for Ax} together with $U\in SO(d)$ yields that
\begin{equation*}
    1=\bar{\boldsymbol{n}}\cdot \bar{\boldsymbol{n}}=A\boldsymbol{n}\cdot A\boldsymbol{n}=\left(U(l_{dd}\, \boldsymbol{n})\right)\cdot \left(U (l_{dd}\, \boldsymbol{n})\right)=l_{dd}^2.
\end{equation*}
We know that $l_{dd}=1$ since $l_{dd}>0$. 

To see $l_{dj}=0$, we notice that $\det B>0$ implies that $\boldsymbol{\tau}_j\in \operatorname{span}\{\boldsymbol{q}_2-\boldsymbol{q}_1, \ldots, \boldsymbol{q}_{d}-\boldsymbol{q}_1, \boldsymbol{n}\}=\mathbb{R}^d$. Moreover, since $\boldsymbol{\tau}_j\cdot \boldsymbol{n}=0$, we further deduce that $\boldsymbol{\tau}_j\in \operatorname{span}\{\boldsymbol{q}_2-\boldsymbol{q}_1, \ldots, \boldsymbol{q}_{d}-\boldsymbol{q}_1\}$. This and \eqref{eq: property of A} conclude that $A\boldsymbol{\tau}_j \in \operatorname{span}\{A(\boldsymbol{q}_2-\boldsymbol{q}_1), \ldots, A(\boldsymbol{q}_{d}-\boldsymbol{q}_1)\}= \operatorname{span}\{\bar{\boldsymbol{q}}_2-\bar{\boldsymbol{q}}_1, \ldots, \bar{\boldsymbol{q}}_{d}-\bar{\boldsymbol{q}}_1\}$. Therefore, together with \eqref{eq: QR for Ax} and the identity $\bar{\boldsymbol{n}}=A\boldsymbol{n}$, we have
\begin{align*}
    0=A\boldsymbol{\tau}_j\cdot \bar{\boldsymbol{n}}=A\boldsymbol{\tau}_j\cdot A\boldsymbol{n}=\left(U\left(\sum_{i=j}^{d-1}l_{ij}\boldsymbol{\tau}_i+l_{dj}\boldsymbol{n}\right)\right)\cdot \left(U(l_{dd}\, \boldsymbol{n})\right)=l_{dj}\,l_{dd}.
\end{align*}
Thus $l_{dj}=0, \forall 1\leq j\leq d-1$. Therefore, \eqref{eq: QR for Ax} can be further simplified as
\begin{equation}\label{eq: QR for A final}
    A=U\left(\sum_{1\leq j\leq i\leq d-1}l_{ij}\boldsymbol{\tau}_i\boldsymbol{\tau}_j^T+\boldsymbol{n}\boldsymbol{n}^T\right),
\end{equation}
\eqref{eq: QR 2d}, \eqref{eq: QR 2d 21} and \eqref{eq: QR 2d 22} are the direct results of \eqref{eq: property of A}, \eqref{eq: affine res} and \eqref{eq: QR for A final}.  
\end{proof}

By using the representation of $\nabla_{\Gamma} \boldsymbol{X}|_{\sigma}$ given in Lemma \ref{lem: QR}, we can finally establish the local energy estimate.

{\em Proof of the local energy estimate, Lemma \ref{lem: local est}}\quad
First we take $\boldsymbol{\tau}_1, \ldots, \boldsymbol{\tau}_{d-1}$ as in the definition of $P_{\alpha}(U, \boldsymbol{n})$ and $Q(U, \boldsymbol{n})$. We know that $\{\boldsymbol{\tau}_1, \ldots, \boldsymbol{\tau}_{d-1}, \boldsymbol{n}\}$ form an orthonormal basis and $\det \begin{bmatrix}\boldsymbol{\tau}_1,\ldots,  \boldsymbol{\tau}_{d-1}, \boldsymbol{n}\end{bmatrix}=1$. Therefore, from Lemma \ref{lem: QR}, there exists a matrix $U\in SO(d)$ and a lower triangular matrix $L=(l_{ij})_{1\leq j\leq i\leq d-1}$ with $l_{ii}>0, \forall 1\leq i\leq d-1$, such that
\begin{equation}\label{eq: tmp xxad}
  \nabla_{\Gamma} \boldsymbol{X}|_{\sigma}=U\left(\sum_{1\leq j\leq i\leq d-1}l_{ij}\boldsymbol{\tau}_i\boldsymbol{\tau}_j^T+\boldsymbol{n}\boldsymbol{n}^T\right)\nabla_{\Gamma} \textnormal{\textbf{id}}\, |_{\sigma}.
\end{equation}

By using Lemma 3.6 in \cite{bao2022symmetrized} and Lemma 9 (i) in \cite{Barrett2020}, we know that
\begin{equation}\label{eq: nabla id 2d}
  \nabla_{\Gamma} \textnormal{\textbf{id}}|_{\sigma}=I_d-\boldsymbol{n}\boldsymbol{n}^T=\sum_{i=1}^{d-1}\boldsymbol{\tau}_i\boldsymbol{\tau}_i^T.
\end{equation}
This, together with \eqref{eq: tmp xxad} yields that
\begin{align}\label{eq: nabla X 2d}
  \nabla_{\Gamma}\boldsymbol{X}|_{\sigma}&=U\left(\sum_{1\leq j\leq i\leq d-1}l_{ij}\boldsymbol{\tau}_i\boldsymbol{\tau}_j^T+\boldsymbol{n}\boldsymbol{n}^T\right)(I_d-\boldsymbol{n}\boldsymbol{n}^T)\nonumber\\
  &=\sum_{1\leq j\leq i\leq d-1}l_{ij}(U\boldsymbol{\tau}_i)\boldsymbol{\tau}_j^T.
\end{align}

The left-hand side of \eqref{eq: energy difference two steps, local} is composed of the subtraction of two components. Utilizing \eqref{eq: nabla X 2d} and the definition of $\boldsymbol{G}_k(\boldsymbol{n})$ as given in \eqref{eq: def of Gk}, and using the fact \eqref{eq: Gs and Ga} that $\boldsymbol{G}_k(\boldsymbol{n})$ is the sum of its symmetric part $\boldsymbol{G}_k^{(s)}$ and anti-symmetric part $\boldsymbol{G}^{(a)}$, we can simplify the first component as follows
\begin{align}\label{eq: lhs 1 est 2d}
&|\sigma| \left(\boldsymbol{G}_k (\boldsymbol{n})\nabla_{\Gamma} \boldsymbol{X}|_{\sigma}\right):(\nabla_{\Gamma}\boldsymbol{X}|_{\sigma})\nonumber\\
&=|\sigma|\,\left(\boldsymbol{G}_k^{(s)}(\boldsymbol{n}) \nabla_{\Gamma} \boldsymbol{X}|_{\sigma}\right):(\nabla_{\Gamma} \boldsymbol{X}|_{\sigma})\nonumber\\
&=|\sigma|\,\text{Tr}\left[\left(\sum_{1\leq q\leq p\leq d-1}l_{pq}\boldsymbol{\tau}_q(U\boldsymbol{\tau}_p)^T\right)(\gamma(\boldsymbol{n})I_d+k(\boldsymbol{n})\boldsymbol{n}\boldsymbol{n}^T)\right.\nonumber\\
&\qquad\qquad\qquad\left. \left(\sum_{1\leq j\leq i\leq d-1}l_{ij}(U\boldsymbol{\tau}_i)\boldsymbol{\tau}_j^T\right)\right]\nonumber\\
&=|\sigma|\left[\gamma(\boldsymbol{n})\sum_{1\leq j\leq i\leq d-1}l_{ij}^2+k(\boldsymbol{n})\sum_{1\leq j\leq i, p\leq d-1}l_{ij}l_{pj}(U\boldsymbol{\tau}_i\cdot \boldsymbol{n})(U\boldsymbol{\tau}_p\cdot \boldsymbol{n})\right]\nonumber\\
&=|\sigma|\text{Tr}\left(L^T(P_{k(\boldsymbol{n})}(U, \boldsymbol{n})L)\right).
\end{align}
The second component is detailed as:
\begin{align}\label{eq: lhs 2 est 2d}
&|\sigma| \left(\boldsymbol{G}_k (\boldsymbol{n})\nabla_{\Gamma} \boldsymbol{X}|_{\sigma}\right):(\nabla_{\Gamma}\textnormal{\textbf{id}}|_{\sigma}) \nonumber\\
&=|\sigma|\,\text{Tr}\left[(I_d-\boldsymbol{n}\boldsymbol{n}^T)(\gamma(\boldsymbol{n})I_d-\boldsymbol{n}\boldsymbol{\xi}^T+\boldsymbol{\xi}\boldsymbol{n}^T+k(\boldsymbol{n})\boldsymbol{n}\boldsymbol{n}^T)\right.\nonumber\\
&\qquad\qquad \qquad \left. \left(\sum_{1\leq j\leq i\leq d-1}l_{ij}(U\boldsymbol{\tau}_i)\boldsymbol{\tau}_j^T\right)\right]\nonumber\\
&=|\sigma|\,\text{Tr}\left[\left(\sum_{i=1}^{d-1}\boldsymbol{\tau}_i\boldsymbol{\tau}_i^T\right)(\gamma(\boldsymbol{n})I_d+\boldsymbol{\xi}\boldsymbol{n}^T) \left(\sum_{1\leq j\leq i\leq d-1}l_{ij}(U\boldsymbol{\tau}_i)\boldsymbol{\tau}_j^T\right)\right]\nonumber\\
&=|\sigma|\left[\sum_{1\leq j\leq i\leq d-1} l_{ij}\Bigl(\gamma(\boldsymbol{n})(U\boldsymbol{\tau}_i\cdot \boldsymbol{\tau}_j)+(U\boldsymbol{\tau}_i\cdot \boldsymbol{n})(\boldsymbol{\tau}_j\cdot \boldsymbol{\xi})\Bigr)\right]\nonumber\\
&=|\sigma|\text{Tr}\left(L^TQ(U, \boldsymbol{n})\right).
\end{align}

For the right-hand side of \eqref{eq: energy difference two steps, local}, $\gamma(\bar{\boldsymbol{n}})|\bar{\sigma}|-\gamma(\boldsymbol{n})|\sigma|$, it suffices to deal with $\gamma(\bar{\boldsymbol{n}})|\bar{\sigma}|$. We have already known that $\bar{n}=U\boldsymbol{n}$ by Lemma \ref{lem: QR}. From \eqref{eq: def of direction d}, \eqref{eq: wedge product} and \eqref{eq: normal vector}, we deduce that 
\begin{align}\label{eq: rhs est 2d 1}
  \gamma (\bar{\boldsymbol{n}})\,|\bar{\sigma}|&=\gamma (U\boldsymbol{n})\,\frac{1}{d-1} \mathcal{J}\{\bar{\sigma}\}\cdot \boldsymbol{\bar{n}}\nonumber\\
  &=\gamma (U\boldsymbol{n})\frac{1}{d-1}\det [\bar{\boldsymbol{q}}_{2}-\bar{\boldsymbol{q}}_{1}, \ldots, \bar{\boldsymbol{q}}_{d}-\bar{\boldsymbol{q}}_{1}, \boldsymbol{\bar{n}}]\nonumber\\
  &=\gamma(U\boldsymbol{n})\frac{1}{d-1}\det \left(U\left(\sum_{1\leq j\leq i\leq d-1}l_{ij}\boldsymbol{\tau}_i\boldsymbol{\tau}_j^T+\boldsymbol{n}\boldsymbol{n}^T\right)\right)\nonumber\\
  &\quad\times \det [\boldsymbol{q}_2-\boldsymbol{q}_1, \ldots, \boldsymbol{q}_d-\boldsymbol{q}_1, \boldsymbol{n}]\nonumber\\
  &=\gamma(U\boldsymbol{n})|\sigma|\det \left(\sum_{1\leq j\leq i\leq d-1}l_{ij}\boldsymbol{\tau}_i\boldsymbol{\tau}_j^T+\boldsymbol{n}\boldsymbol{n}^T\right).
\end{align}
Here we use the identity $\bar{\boldsymbol{q}}_j-\bar{\boldsymbol{q}}_1=A(\boldsymbol{q}_j-\boldsymbol{q}_1), \forall 1\leq j\leq d$ from \eqref{eq: QR 2d 21}, and the fact $\det [\bar{\boldsymbol{q}}_{2}-\bar{\boldsymbol{q}}_{1}, \ldots, \bar{\boldsymbol{q}}_{d}-\bar{\boldsymbol{q}}_{1}, \boldsymbol{\bar{n}}]=\det [A(\boldsymbol{q}_{2}-\boldsymbol{q}_1), \ldots, A(\boldsymbol{q}_d-\boldsymbol{q}_1), A\boldsymbol{n}]=\det A \, \det [\boldsymbol{q}_{j_2}-\boldsymbol{q}_{j_1}, \ldots, \boldsymbol{q}_{j_d}-\boldsymbol{q}_{j_1}, \boldsymbol{n}]$, where $A=U\left(\sum\limits_{1\leq j\leq i\leq d-1}l_{ij}\boldsymbol{\tau}_i\boldsymbol{\tau}_j^T+\boldsymbol{n}\boldsymbol{n}^T\right)$.
Furthermore, we observe that
\begin{equation*}
    \sum_{1\leq j\leq i\leq d-1}l_{ij}\boldsymbol{\tau}_i\boldsymbol{\tau}_j^T+\boldsymbol{n}\boldsymbol{n}^T=\begin{bmatrix}\boldsymbol{\tau}_1,\ldots, \boldsymbol{\tau}_{d-1}, \boldsymbol{n}\end{bmatrix} \begin{bmatrix}l_{11} & &\\  \vdots &\ddots&\\l_{d1}&\ldots&l_{dd} \end{bmatrix}\begin{bmatrix}\boldsymbol{\tau}_1,\ldots, \boldsymbol{\tau}_{d-1}, \boldsymbol{n}\end{bmatrix}^T.
\end{equation*}
Here $l_{di}=0, \, \forall 1\leq i\leq d-1$ and $l_{dd}=1$.
Therefore, \eqref{eq: rhs est 2d 1} can be further simplified as
\begin{equation}\label{eq: rhs est 2d}
    \gamma (\bar{\boldsymbol{n}})\,|\bar{\sigma}|=\gamma(U\boldsymbol{n})|\sigma|\prod\limits_{i=1}^{d-1}l_{ii}.
\end{equation}
Finally, by substituting \eqref{eq: lhs 1 est 2d}, \eqref{eq: lhs 2 est 2d}, \eqref{eq: rhs est 2d} into the local energy estimate \eqref{eq: energy difference two steps, local}, we deduce that the local energy estimate \eqref{eq: energy difference two steps, local} is equivalent to
\begin{align}\label{eq: aux final 2d}
&|\sigma| \left(\boldsymbol{G}_k (\boldsymbol{n})\nabla_{\Gamma} \boldsymbol{X}|_{\sigma}\right):(\nabla_{\Gamma}\boldsymbol{X}|_{\sigma}-\nabla_{\Gamma} \textnormal{\textbf{id}}|_{\sigma})- \left(\gamma(\bar{\boldsymbol{n}})|\bar{\sigma}|-\gamma(\boldsymbol{n})|\sigma|\right)\nonumber\\
&=|\sigma|\left(\text{Tr}\left(L^T(P_{k(\boldsymbol{n})}(U, \boldsymbol{n})L-Q(U, \boldsymbol{n}))\right)- \left(\gamma(U\boldsymbol{n})\prod\limits_{i=1}^{d-1}l_{ii}-\gamma(\boldsymbol{n})\right)\right)\nonumber\\
  &\geq 0.
\end{align}
From the unified definition of $k_0(\boldsymbol{n})$ \eqref{eq: def of k0}, we know that $\gamma(U\boldsymbol{n})\prod\limits_{i=1}^{d-1}l_{ii}-\gamma(\boldsymbol{n})$ $\le \text{Tr}\left(L^T(P_{k(\boldsymbol{n})}(U, \boldsymbol{n})L-Q(U, \boldsymbol{n}))\right)$ for all $U\in SO(d)$, lower triangular matrix $L=(l_{ij})_{1\leq j\leq i\leq d-1}$ with $l_{ii}>0, \forall 1\leq i\leq d-1$, and $k(\boldsymbol{n})\geq k_0(\boldsymbol{n})$. Theorem \ref{thm: existence of k0} indicates that for dimensions $d=2, 3$, the unified minimal stabilizing function $k_0(\boldsymbol{n})<\infty$ is well-defined. Therefore, we can choose sufficient large $k(\boldsymbol{n})$ satisfying $k(\boldsymbol{n})\ge k_0(\boldsymbol{n})$ such that the desired local energy estimate \eqref{eq: energy difference two steps, local} is validated.
\endproof

\subsection{Proof of main result}
With the help of the local energy estimate \eqref{eq: energy difference two steps, local} in Lemma \ref{lem: local est}, we are finally able to finish the unconditional energy stability part \eqref{eq: full-dis energy dissipation} of the main result \ref{thm: main}.

{\em Proof of unconditional energy stability}. \quad Suppose $k(\boldsymbol{n})$ is sufficiently large, such that $k(\boldsymbol{n})\geq k_0(\boldsymbol{n})$, and the local energy estimate \eqref{eq: energy difference two steps, local} holds. For each $1\leq j\leq J$, we apply Lemma \ref{lem: local est} for $\sigma=[\boldsymbol{q}_{j_1}^m, \ldots, \boldsymbol{q}_{j_d}^m]$, $\bar{\sigma}=[\boldsymbol{q}_{j_1}^{m+1},\ldots, \boldsymbol{q}_{j_d}^{m+1}]$, and $\boldsymbol{X}=\boldsymbol{X}^{m+1}$. Consequently, the local energy estimate \eqref{eq: energy difference two steps, local} gives
\begin{align*}
  &|\sigma_j^m| \left(\boldsymbol{G}_k (\boldsymbol{n}_j^m)\nabla_{\Gamma} \boldsymbol{X}^{m+1}|_{\sigma_j^m}\right):(\nabla_{\Gamma}\boldsymbol{X}^{m+1}|_{\sigma_j^m}-\nabla_{\Gamma} \boldsymbol{X}^m|_{\sigma_j^m})\\
  &\geq \gamma(\boldsymbol{n}_j^{m+1})|\sigma_j^{m+1}|-\gamma(\boldsymbol{n}_j^m)|\sigma_j^m|.
\end{align*} 
By taking summation of this inequality for $j$ from $1$ to $J$, and applying the mass-lumped inner product \eqref{eq: mass lumped inner product, matrix} and the definition for $W^m$ \eqref{eq: full-dis energy}, we get
\begin{align}\label{eq: apply lem}
  &\langle \boldsymbol{G}_k(\boldsymbol{n}^m)\nabla_{\Gamma} \boldsymbol{X}^{m+1},\, \nabla_{\Gamma} (\boldsymbol{X}^{m+1}-\boldsymbol{X}^m)\rangle_{\Gamma^m}^h\nonumber\\
  &=\frac{1}{d}\sum_{j=1}^J|\sigma_j^m|\sum_{i=1}^d \Bigl(\boldsymbol{G}_k (\boldsymbol{n}^m_j)\nabla_{\Gamma} \boldsymbol{X}^{m+1} ( (\boldsymbol{q}_{j_i}^m)^-))\Bigr.\nonumber\\
  &\qquad\qquad\qquad\quad\Bigl.:\nabla_{\Gamma}\boldsymbol{X}^{m+1} ( (\boldsymbol{q}_{j_i}^m)^-)-\nabla_{\Gamma} \boldsymbol{X}^{m}((\boldsymbol{q}_{j_i}^m)^-))\Bigr)\nonumber\\
  &=\sum_{j=1}^J|\sigma_j^m| \left(\boldsymbol{G}_k (\boldsymbol{n}_j^m)\nabla_{\Gamma} \boldsymbol{X}^{m+1}|_{\sigma_j^m}\right):(\nabla_{\Gamma}\boldsymbol{X}^{m+1}|_{\sigma_j^m}-\nabla_{\Gamma} \boldsymbol{X}^m|_{\sigma_j^m})\nonumber\\
  &\geq \sum_{j=1}^J\left(\gamma(\boldsymbol{n}^{m+1}_j)|\sigma_j^{m+1}|-\gamma(\boldsymbol{n}^m_j)|\sigma_j^m|\right)=W^{m+1}-W^m, \qquad m\ge0.
\end{align}
Choosing $\phi=\mu^{m+1}$ in \eqref{eq: full PFEM 1} and $\boldsymbol{\omega}=\boldsymbol{X}^{m+1}-\boldsymbol{X}^m$ in \eqref{eq: full PFEM 2}, together with \eqref{eq: apply lem} yields that
\begin{align}
W^{m+1}-W^m&\leq \langle \boldsymbol{G}_k (\boldsymbol{n}^m)\nabla_{\Gamma}\boldsymbol{X}^{m+1}, \nabla_{\Gamma} (\boldsymbol{X}^{m+1}-\boldsymbol{X}^m)\rangle_{\Gamma^m}^h\nonumber\\
&=\left(\mu^{m+1}\boldsymbol{n}^{m+\frac{1}{2}}, \boldsymbol{X}^{m+1}-\boldsymbol{X}^m\right)_{\Gamma^m}^h\nonumber\\
&=-\tau\left(\nabla_{\Gamma}\mu^{m+1}, \nabla_{\Gamma}\mu^{m+1}\right)_{\Gamma^m}^h\leq 0,\qquad m\ge0,
\end{align}
which validates the unconditional energy stability
\eqref{eq: full-dis energy dissipation} in Theorem \ref{thm: main}.
\endproof

\section{Existence of the minimal stabilizing function}

\setcounter{equation}{0}

In this section, we first reduce the existence of the minimal stabilizing function $k_0(\boldsymbol{n})$ for dimensions $d=2, 3$ to the positive semi-definiteness of an auxiliary matrix. For any unit normal vector $\boldsymbol{n}\in \mathbb{S}^{d-1}$, we take $\boldsymbol{\tau}_1, \ldots, \boldsymbol{\tau}_{d-1}$ as in the definition of $P_{\alpha}(U, \boldsymbol{n})$ and $Q(U, \boldsymbol{n})$. 

When $d=2$, we define the auxiliary $2\times 2$ symmetric matrix $\tilde{M}(U, \alpha)$ for any $U\in SO(2)$ and $\alpha\in \mathbb{R}_{\geq 0}$ as follows
\begin{equation}\label{eq: def of M 2d}
  \tilde{M}(U, \alpha):=\begin{bmatrix}
  \gamma(\boldsymbol{n})+\alpha(U\boldsymbol{\tau}_1\cdot \boldsymbol{n})^2 &*\\
    -\frac{1}{2}\left(\gamma(\boldsymbol{n})(U\boldsymbol{\tau}_1\cdot \boldsymbol{\tau}_1)+(U\boldsymbol{\tau}_1\cdot \boldsymbol{n})(\boldsymbol{\tau}_1\cdot \boldsymbol{\xi})+\gamma(U\boldsymbol{n})\right)&\gamma(\boldsymbol{n})
  \end{bmatrix},
\end{equation}
here the entries above the main diagonal are abbreviated to $*$ since $\tilde{M}(U, \alpha)$ is symmetric.

It is straightforward to check that for $d=2$, it holds that
\begin{align*}
    &\text{Tr}\left(L^T(P_{\alpha}(U, \boldsymbol{n})L-Q(U, \boldsymbol{n}))\right)\geq \gamma(U\boldsymbol{n})\prod\limits_{i=1}^{d-1}l_{ii}-\gamma(\boldsymbol{n})\\
    \iff & \begin{bmatrix} l_{11}& 1\end{bmatrix} \tilde{M}(U, \alpha) \begin{bmatrix} l_{11}& 1\end{bmatrix}^T \geq 0.
\end{align*}

Therefore, by utilizing the auxiliary matrix $\tilde{M}(U, \alpha)$, the unified definition of the minimal stabilizing function $k_0(\boldsymbol{n})$ \eqref{eq: def of k0} is equivalent to
\begin{equation}\label{eq: def of k0 2d}
  k_0(\boldsymbol{n}):=\inf\Big\{\alpha\Big|\,\,\tilde{M}(U, \alpha) \text{ is positive semi-definite }\quad \forall U\in SO(2)\Big\}.
\end{equation}

When $d=3$, we define the auxiliary $4\times 4$ symmetric matrix $M(U, \alpha)$ for any $U\in SO(3)$ and $\alpha\in \mathbb{R}_{\geq 0}$ as follows
\begin{equation}\label{eq: def of M}
  \begin{bmatrix}
  \gamma(\boldsymbol{n})+\alpha(U\boldsymbol{\tau}_1\cdot \boldsymbol{n})^2 &*&*&*\\
  -\frac{1}{2}\gamma(U\boldsymbol{n})&\gamma(\boldsymbol{n})+\alpha(U\boldsymbol{\tau}_2\cdot \boldsymbol{n})^2&*&*\\
  \alpha(U\boldsymbol{\tau}_1\cdot \boldsymbol{n})(U\boldsymbol{\tau}_2\cdot \boldsymbol{n})&0&\gamma(\boldsymbol{n})+\alpha(U\boldsymbol{\tau}_2\cdot \boldsymbol{n})^2&*\\
  M_{41}&M_{42}&M_{43}&\gamma(\boldsymbol{n})
  \end{bmatrix},
\end{equation}
and $M_{41}, M_{42}, M_{43}$ are 
\begin{subequations}\label{eq: def of Ms}
\begin{align}
\label{eq: def of M31}
  &M_{41}=-\frac{1}{2}(\gamma(\boldsymbol{n})(U\boldsymbol{\tau}_1\cdot \boldsymbol{\tau}_1)+(U\boldsymbol{\tau}_1\cdot \boldsymbol{n})(\boldsymbol{\tau}_1\cdot \boldsymbol{\xi})), \\
\label{eq: def of M32}
  &M_{42}=-\frac{1}{2}(\gamma(\boldsymbol{n})(U\boldsymbol{\tau}_2\cdot \boldsymbol{\tau}_2)+(U\boldsymbol{\tau}_2\cdot \boldsymbol{n})(\boldsymbol{\tau}_2\cdot \boldsymbol{\xi})),\\
\label{eq: def of M43}
  &M_{43}=-\frac{1}{2}(\gamma(\boldsymbol{n})(U\boldsymbol{\tau}_2\cdot \boldsymbol{\tau}_1)+(U\boldsymbol{\tau}_2\cdot \boldsymbol{n})(\boldsymbol{\tau}_1\cdot \boldsymbol{\xi})).
\end{align}
\end{subequations}

Similarly, it is straightforward to check that for $d=3$, it holds that
\begin{align*}
    &\text{Tr}\left(L^T(P_{\alpha}(U, \boldsymbol{n})L-Q(U, \boldsymbol{n}))\right)\geq \gamma(U\boldsymbol{n})\prod\limits_{i=1}^{d-1}l_{ii}-\gamma(\boldsymbol{n})\\
    \iff & \begin{bmatrix} l_{11}& l_{22}&l_{21}& 1\end{bmatrix} M(U, \alpha)  \begin{bmatrix} l_{11}& l_{22}&l_{21}& 1\end{bmatrix}^T \geq 0.
\end{align*}
The unified definition of the minimal stabilizing function $k_0(\boldsymbol{n})$ \eqref{eq: def of k0} is equivalent to
\begin{equation}\label{eq: def of k0 3d}
  k_0(\boldsymbol{n}):=\inf\Big\{\alpha\Big|\,\,M(U, \alpha) \text{ is positive semi-definite }\quad \forall U\in SO(3)\Big\}.
\end{equation}

We then propose a unified approach for showing the positive semi-definiteness. And after that, we adopt this unified approach to show the positive semi-definiteness of $\tilde{M}$ and $M$ for dimensions $d=2$ and $d=3$, respectively.

\subsection{A unified approach}
\begin{lemma}\label{lem: compactness}
Let $A:\ SO(d)\times \mathbb{R}\to \mathbb{R}^{m\times m}$ and $D:\ SO(d)\to \mathbb{R}^{m\times m}$
be two $m\times m$ symmetric continuous matrices satisfying  the following $3$ conditions
\begin{itemize}
\item (i) Linearity in $\alpha$
\begin{equation}\label{eq: A is linear on alpha}
    A(U, \alpha)=A(U, 0)+\alpha D(U), \qquad D(U) \text{ is positive semi-definite}.
\end{equation}
\item (ii) There exists a constant $k_{m-1}\geq 0$, such that 
\begin{equation}\label{eq: An-1 is spd}
    A_{m-1}(U, \alpha) \text{ is positive-definite}, \qquad \forall U\in  SO(d), \alpha\geq k_{m-1},
\end{equation}
where $A_{m-1}$ is the $(m-1)$th leading principle minor of $A$. 
\item (iii) For any $U\in SO(d)$, there exists a constant $k_{m, U}\geq k_{m-1}$ and an open neighbourhood $\mathcal{U}_{U}$ of $U$, such that
\begin{equation}\label{eq: local spd}
    \det(A(\tilde{U}, k_{m, U}))\geq 0, \qquad \forall \tilde{U}\in \mathcal{U}_{U}.
\end{equation}
\end{itemize}
Then there exists a finite constant $k_m\geq k_{m-1}$, such that for any $\alpha\geq k_m$, it holds
\begin{equation}\label{eq: A is spd}
     A(U, \alpha)\text{ is positive semi-definite}, \qquad \forall U\in SO(d).
\end{equation}
\end{lemma}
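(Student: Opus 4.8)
{\em Proof proposal.}\quad The plan is to combine three ingredients: a linear-algebra reduction that, under condition~(ii), replaces the positive semi-definiteness of $A(U,\alpha)$ by the single scalar inequality $\det A(U,\alpha)\ge 0$; the compactness of $SO(d)$, which converts the pointwise data supplied by condition~(iii) into a uniform finite threshold; and the monotonicity encoded in condition~(i), which propagates positivity at one value of $\alpha$ to all larger values.

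First I would record the elementary observation behind the reduction. Let $S\in\R^{m\times m}$ be symmetric, write $S=\left(\begin{smallmatrix}S_{m-1}&b\\ b^{T}&c\end{smallmatrix}\right)$ with $S_{m-1}$ its top-left $(m-1)\times(m-1)$ block, $b\in\R^{m-1}$, $c\in\R$, and suppose $S_{m-1}$ is positive definite. Then the Schur-complement criterion gives that $S$ is positive semi-definite if and only if $c-b^{T}S_{m-1}^{-1}b\ge 0$; since $\det S=(\det S_{m-1})(c-b^{T}S_{m-1}^{-1}b)$ and $\det S_{m-1}>0$, this is in turn equivalent to $\det S\ge 0$. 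By condition~(ii), the hypothesis that $S_{m-1}=A_{m-1}(U,\alpha)$ is positive definite holds for all $U\in SO(d)$ and all $\alpha\ge k_{m-1}$, so for such $\alpha$ the positive semi-definiteness of $A(U,\alpha)$ is equivalent to $\det A(U,\alpha)\ge 0$.

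Next I would use compactness to obtain the threshold. The sets $\mathcal{U}_U$, $U\in SO(d)$, from condition~(iii) form an open cover of the compact group $SO(d)$; fix a finite subcover $\mathcal{U}_{U_1},\dots,\mathcal{U}_{U_N}$ and put $k_m:=\max\{k_{m,U_1},\dots,k_{m,U_N}\}$, which is finite and satisfies $k_m\ge k_{m-1}$ since each $k_{m,U_\ell}\ge k_{m-1}$. Now fix any $\alpha\ge k_m$ and any $U\in SO(d)$, and choose $\ell$ with $U\in\mathcal{U}_{U_\ell}$. Condition~(iii) gives $\det\bigl(A(U,k_{m,U_\ell})\bigr)\ge 0$; condition~(ii) with $k_{m,U_\ell}\ge k_{m-1}$ makes $A_{m-1}(U,k_{m,U_\ell})$ positive definite; hence, by the observation above, $A(U,k_{m,U_\ell})$ is positive semi-definite. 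Finally, condition~(i) yields
\begin{equation*}
  A(U,\alpha)=A(U,k_{m,U_\ell})+(\alpha-k_{m,U_\ell})\,D(U),
\end{equation*}
a sum of a positive semi-definite matrix and a non-negative multiple of the positive semi-definite matrix $D(U)$; therefore $A(U,\alpha)$ is positive semi-definite. Since $U\in SO(d)$ was arbitrary, this establishes \eqref{eq: A is spd}.

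The main obstacle is the linear-algebra step and, more importantly, organizing the argument so that it is used correctly: $\alpha\mapsto\det A(U,\alpha)$ need not be monotone on the range where the smallest eigenvalue of $A(U,\alpha)$ is negative, so one cannot simply push the determinant inequality of~(iii) from $\alpha=k_{m,U_\ell}$ up to $\alpha=k_m$. The remedy is to use~(iii) only to reach one value of $\alpha$ at which $A(U,\cdot)$ is genuinely positive semi-definite, and then add the positive semi-definite correction $(\alpha-k_{m,U_\ell})D(U)$; this is precisely why the linearity and semi-definiteness in~(i) are needed rather than mere continuity in $\alpha$. Compactness is indispensable because, without it, the thresholds $k_{m,U}$ produced by~(iii) could be unbounded over $SO(d)$, and no single finite $k_m$ would exist.
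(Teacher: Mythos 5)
Your proposal is correct and follows essentially the same route as the paper's proof: the Schur-complement reduction to $\det A\ge 0$ under condition~(ii), compactness of $SO(d)$ to pass from the local data of condition~(iii) to a finite subcover and a uniform threshold $k_m$, and condition~(i) to propagate positive semi-definiteness from $\alpha=k_{m,U_\ell}$ up to any $\alpha\ge k_m$ by adding the positive semi-definite correction $(\alpha-k_{m,U_\ell})D(U)$. The only cosmetic difference is that you extract the finite subcover up front whereas the paper invokes it at the end; your closing remark about why the determinant inequality must be routed through a genuine positive semi-definite point rather than pushed monotonically is a nice clarification of the same argument, not a new one.
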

\begin{proof}

We begin by showing that for an \(m \times m\) matrix \(A\), if the \((m-1)\)th leading principal minor \(A_{m-1}\) is positive definite and \(\det(A) \geq 0\), then \(A\) is positive semi-definite.

Consider the Schur complement of \(A_{m-1}\) in \(A\), denoted by \(A / A_{m-1}\). According to \cite[Appendix A.5.5]{boyd2004convex}, if \(A_{m-1}\) is positive definite, then
\[
A \text{ is positive semi-definite} \iff A / A_{m-1} \geq 0.
\]

Since \(\det(A) \geq 0\) and \(\det(A_{m-1}) > 0\), we have
\[
A / A_{m-1} = \frac{\det(A)}{\det(A_{m-1})} \geq 0.
\]
Thus, \(A\) is positive semi-definite.

Next, we consider any \(U \in SO(d)\) and \(\alpha \geq k_{m, U}\). From condition (iii) \eqref{eq: local spd}, we have
\[
\det(A(\tilde{U}, k_{m, U})) \geq 0 \quad \forall \tilde{U} \in \mathcal{U}_{U}.
\]
Moreover, by condition (ii) \eqref{eq: An-1 is spd} and the fact that \(k_{m, U} \geq k_{m-1}\), it follows that
\[
A_{m-1}(\tilde{U}, k_{m, U}) \text{ is positive definite} \quad \forall \tilde{U} \in SO(d).
\]
Therefore, \(A(\tilde{U}, k_{m, U})\) is positive semi-definite for any \(\tilde{U} \in \mathcal{U}_{U}\).

Finally, by condition (i) \eqref{eq: A is linear on alpha}, we have
\begin{equation}
A(\tilde{U}, \alpha) = A(\tilde{U}, k_{m, U}) + (\alpha - k_{m, U})D(\tilde{U}).
\end{equation}
Thus $\det(A(\tilde{U}, \alpha))\geq 0$ for all $\alpha\geq k_{m, U}, \tilde{U}\in \mathcal{U}_U$. And \eqref{eq: A is spd} is a direct result of the compactness of $SO(d)$ \cite{fulton2013representation} and the open cover theorem.
\end{proof}

Our objective is to establish that the two auxiliary matrices $\tilde{M}$, which dimension is $d=2$,   and $M$, which dimension is $d=3$, are positive semi-definite. Lemma \ref{lem: compactness} offers a straightforward and unified method to show the positive semi-definiteness of a matrix in arbitrary dimensions. By applying Lemma \ref{lem: compactness} with $A=\tilde{M}$ or $A=M$, we find that the condition (i) is already fulfilled.

For the condition (ii) \eqref{eq: An-1 is spd}, the positive-definiteness of $A_{m-1}(U, \alpha)$ can be verified by examining its leading principal minors. This is elaborated in Lemma \ref{lem: F1 and M1 2d} for $d=2$, and Lemma \ref{lem: F1 and M1} for $d=3$, respectively.

When it comes to verifying the condition (iii) \eqref{eq: local spd}, the analysis diverges into two cases. The simpler case is when $\det(A(U, k_{m, U}))>0$ for some positive $k_{m, U}$, the condition (iii) \eqref{eq: local spd} is ensured by the continuity of $\det(A(\cdot, k_{m, U}))$. For detailed computations in this case, we refer to Lemma \ref{lem: F4 and M4 2d} for $d=2$ and Lemma \ref{lem: F4 and M4} for $d=3$.

The most difficult case arises when $\det(A(U, k_{m, U}))=0$.  We need to prove that $U$ is a local minimum of the determine function $\det(A(\cdot, k_{m, U}))$. Typically, this can be verified by demonstrating that its gradient is zero and its Hessian matrix is positive-definite. However, finding the gradient of a determinant function is quite complicated, Jacobi's formula is introduced to simplify the calculations.

\begin{lemma}[Jacobi's formula]\label{lem: jacobi}
Suppose $A=(a_{i,j})_{m\times m}$ be a matrix of functions, we have
\begin{equation}\label{eq: jacobi 1}
  \frac{\partial\det(A)}{\partial \beta }=\textnormal{Tr}\left(\textnormal{adj}(A)\frac{\partial A}{\partial \beta}\right).
\end{equation}
\begin{equation}\label{eq: jacobi 2}
  \frac{\partial^2\det(A)}{\partial \beta \partial \phi}=\textnormal{Tr}\left(\textnormal{adj}(A)\frac{\partial^2A}{\partial \beta \partial \phi}\right)+\sum_{i\neq j}\det \begin{bmatrix}
  a_{1,1}&a_{1,2}&\ldots&a_{1,m}\\
  \vdots&\vdots&\ldots&\vdots\\
  \frac{\partial a_{i,1}}{\partial \beta}&\frac{\partial a_{i,2}}{\partial \beta}
  &\ldots&\frac{\partial a_{i,m}}{\partial \beta}\\
  \vdots&\vdots&\ldots&\vdots\\
  \frac{\partial a_{j,1}}{\partial \phi}&\frac{\partial a_{j,2}}{\partial \phi}
  &\ldots&\frac{\partial a_{j,m}}{\partial \phi}\\
  \vdots&\vdots&\ldots&\vdots\\
  a_{m,1}&a_{m,2}&\ldots&a_{m,m}
  \end{bmatrix}.
\end{equation}
Here $\textnormal{adj}(A)$ is the adjunct matrix of $A$.
\end{lemma}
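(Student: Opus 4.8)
The plan is to derive both identities from the multilinearity of the determinant with respect to the rows of $A$, combined with the Laplace (cofactor) expansion. Write $A$ in terms of its rows $\boldsymbol{r}_1, \ldots, \boldsymbol{r}_m$, so that $\det(A) = \det(\boldsymbol{r}_1, \ldots, \boldsymbol{r}_m)$ is a multilinear alternating function of the rows. I will use throughout the three standard facts: $\det(A) = \sum_{j=1}^m a_{i,j} C_{i,j}$ for every fixed row index $i$, where $C_{i,j}$ denotes the $(i,j)$ cofactor; the cofactor $C_{i,j}$ does not involve any entry of row $i$; and $\textnormal{adj}(A)_{j,i} = C_{i,j}$.

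First I would prove \eqref{eq: jacobi 1}. Differentiating $\det(\boldsymbol{r}_1, \ldots, \boldsymbol{r}_m)$ with respect to $\beta$ and using the Leibniz rule together with row-linearity gives
\begin{equation*}
  \frac{\partial \det(A)}{\partial \beta} = \sum_{i=1}^m \det(\boldsymbol{r}_1, \ldots, \partial_\beta \boldsymbol{r}_i, \ldots, \boldsymbol{r}_m).
\end{equation*}
Expanding the $i$th summand along its $i$th row — whose entries are $\partial_\beta a_{i,1}, \ldots, \partial_\beta a_{i,m}$, while the corresponding cofactors coincide with those of $A$ because they involve only rows $\neq i$ — yields $\sum_{j=1}^m (\partial_\beta a_{i,j})\, C_{i,j} = \sum_{j=1}^m (\partial_\beta a_{i,j})\, \textnormal{adj}(A)_{j,i}$. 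Summing over $i$ and recognizing the resulting double sum as a trace, $\sum_{i,j} (\partial_\beta a_{i,j})\,\textnormal{adj}(A)_{j,i} = \textnormal{Tr}\bigl(\tfrac{\partial A}{\partial \beta}\,\textnormal{adj}(A)\bigr) = \textnormal{Tr}\bigl(\textnormal{adj}(A)\,\tfrac{\partial A}{\partial \beta}\bigr)$, which is \eqref{eq: jacobi 1}.

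Next I would prove \eqref{eq: jacobi 2} by differentiating the above row-sum identity once more with respect to $\phi$, again applying the Leibniz rule and row-linearity to each summand $\det(\boldsymbol{r}_1, \ldots, \partial_\beta \boldsymbol{r}_i, \ldots, \boldsymbol{r}_m)$. This produces two kinds of terms: the \emph{diagonal} term, in which the already $\beta$-differentiated row $i$ is differentiated again, contributing $\det(\boldsymbol{r}_1, \ldots, \partial_\beta\partial_\phi \boldsymbol{r}_i, \ldots, \boldsymbol{r}_m)$; and the \emph{off-diagonal} terms, in which some other row $j \neq i$ is differentiated, contributing $\det(\ldots, \partial_\beta \boldsymbol{r}_i, \ldots, \partial_\phi \boldsymbol{r}_j, \ldots)$. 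Summing the diagonal terms over $i$ and repeating the cofactor-expansion argument above with $\partial_\beta\partial_\phi a_{i,j}$ in place of $\partial_\beta a_{i,j}$ gives $\textnormal{Tr}\bigl(\textnormal{adj}(A)\,\tfrac{\partial^2 A}{\partial \beta \partial \phi}\bigr)$, while the off-diagonal terms, collected over all ordered pairs $i \neq j$, are exactly the determinants in the sum on the right-hand side of \eqref{eq: jacobi 2}.

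I do not anticipate a genuine obstacle, as this is the classical Jacobi formula; the only points requiring care are the bookkeeping of which parameter differentiates which row and the repeated use of the fact that the cofactors of a given row are independent of that row's entries, so that expanding a row-differentiated determinant along the differentiated row reproduces the unchanged adjugate. All identities are pointwise in $\beta$ and $\phi$, so nothing beyond differentiability of the entries $a_{i,j}$ (and equality of mixed partials, used implicitly in the diagonal term) is needed.
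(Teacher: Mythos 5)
Your proof is correct. The paper itself does not supply a proof for this lemma---it simply cites Bellman's \emph{Introduction to Matrix Analysis}---so there is no argument in the paper to compare against; the multilinearity-plus-cofactor-expansion route you take is the standard one (and is in fact essentially what Bellman does). The bookkeeping is sound: the first identity follows from the Leibniz rule over rows and the observation that the $i$th-row cofactors of the row-differentiated matrix coincide with those of $A$; differentiating once more in $\phi$ splits into the diagonal case $j=i$ (giving the $\textnormal{Tr}\bigl(\textnormal{adj}(A)\,\partial^2_{\beta\phi}A\bigr)$ term) and the off-diagonal cases $j\neq i$ (giving exactly the sum of mixed-derivative determinants in \eqref{eq: jacobi 2}). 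You were right to flag equality of mixed partials as the only implicit regularity hypothesis.
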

We refer the proof of Jacobi's formula to \cite{bellman1997introduction}.

Another challenge is that $\det(A(\cdot, k_{m, U}))>0$ is a function defined on $SO(d)$ rather than the Euclid space $\mathbb{R}^d$. In order to define the open neighbourhood of $U\in SO(d)$, the gradient, and the Hessian matrix of $\det(A(\cdot, k_{m, U}))$ with respect to $U$, we need to provide a group representation of $SO(d)$. The following two lemmas are adapted from \cite{fulton2013representation}. 
\begin{lemma}[Group representation of $SO(2)$]\label{lem: rep 2d}
For any $U\in SO(2)$, there exists $\theta\in[0,2\pi]$, such that
\begin{equation}\label{eq: rep of O 2d}
  U \begin{bmatrix} \boldsymbol{\tau}, \boldsymbol{n}\end{bmatrix}=\begin{bmatrix} \boldsymbol{\tau}, \boldsymbol{n}\end{bmatrix}U(\theta),
\end{equation}
where
\begin{equation}\label{eq: O angles 2d}
  U(\theta):={\begin{bmatrix}\cos \theta &\sin \theta \\-\sin \theta &\cos \theta \end{bmatrix}}.
\end{equation}
Moreover, we have
\begin{subequations}\label{eq: dO at 0 2d}
\begin{align}
\label{eq: O at 0 2d}
&U \begin{bmatrix} \boldsymbol{\tau}, \boldsymbol{n}\end{bmatrix}\Big|_{\theta=0}=\begin{bmatrix} \boldsymbol{\tau}, \boldsymbol{n}\end{bmatrix}I_2,\quad
\frac{d}{d \theta} U\begin{bmatrix} \boldsymbol{\tau}, \boldsymbol{n}\end{bmatrix}\Big|_{\theta=0} =\begin{bmatrix} \boldsymbol{\tau}, \boldsymbol{n}\end{bmatrix}{\begin{bmatrix}0&1\\-1&0 \end{bmatrix}},\\
\label{eq: Othetatheta at 0 2d}
&\frac{d^2}{d \theta^2}  U\begin{bmatrix} \boldsymbol{\tau}, \boldsymbol{n}\end{bmatrix}\Big|_{\theta=0}=\begin{bmatrix} \boldsymbol{\tau}, \boldsymbol{n}\end{bmatrix}{\begin{bmatrix}-1&0\\0&-1\end{bmatrix}}.
\end{align}
\end{subequations}
\end{lemma}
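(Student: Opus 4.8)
The plan is to deduce everything from the elementary structure of $SO(2)$, so this lemma is essentially bookkeeping that supplies the group-representation coordinates on $SO(2)$ used later in the $d=2$ existence proof. First I would note that, because $\{\boldsymbol{\tau},\boldsymbol{n}\}$ is an orthonormal basis of $\mathbb{R}^2$ with $\det[\boldsymbol{\tau},\boldsymbol{n}]=1$, the matrix $O:=[\boldsymbol{\tau},\boldsymbol{n}]$ itself lies in $SO(2)$; hence $O^{-1}=O^T$ and, since $SO(2)$ is a group, the conjugate $O^TUO$ again belongs to $SO(2)$.

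Next I would invoke the classification of $2\times 2$ rotation matrices. Any matrix in $SO(2)$ has orthonormal columns; writing the first column as $(\cos\theta,-\sin\theta)^T$ for a suitable $\theta\in[0,2\pi]$, orthogonality together with the determinant condition $\det=+1$ forces the second column to be $(\sin\theta,\cos\theta)^T$, i.e.\ the matrix equals $U(\theta)$ as defined in \eqref{eq: O angles 2d}. Applying this to $O^TUO$ produces some $\theta\in[0,2\pi]$ with $O^TUO=U(\theta)$; left-multiplying by $O$ and rearranging then gives $U[\boldsymbol{\tau},\boldsymbol{n}]=UO=OU(\theta)=[\boldsymbol{\tau},\boldsymbol{n}]U(\theta)$, which is \eqref{eq: rep of O 2d}. (One may also observe that $SO(2)$ is abelian, so in fact $O^TUO=U$ and $U=U(\theta)$, but this is not needed for the statement.)

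Finally, for the derivative identities \eqref{eq: dO at 0 2d} I would regard $\theta\mapsto U(\theta)$ as the parametrization of this one-parameter family, so that \eqref{eq: rep of O 2d} reads $U[\boldsymbol{\tau},\boldsymbol{n}]=OU(\theta)$ and hence $\frac{d^{\,j}}{d\theta^{\,j}}\bigl(U[\boldsymbol{\tau},\boldsymbol{n}]\bigr)=O\,\frac{d^{\,j}}{d\theta^{\,j}}U(\theta)$ for $j=0,1,2$. It then remains only to differentiate the explicit matrix in \eqref{eq: O angles 2d} twice and evaluate at $\theta=0$, which yields $U(0)=I_2$ and the two constant matrices appearing on the right-hand sides of \eqref{eq: O at 0 2d} and \eqref{eq: Othetatheta at 0 2d}; multiplying on the left by $O=[\boldsymbol{\tau},\boldsymbol{n}]$ gives \eqref{eq: dO at 0 2d}. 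There is no substantive difficulty in this lemma; the only point requiring care is keeping track of the transpose/sign convention built into \eqref{eq: O angles 2d}, so that the conjugation $O^TUO=U(\theta)$ and the signs produced by differentiating $U(\theta)$ are all consistent with that convention.
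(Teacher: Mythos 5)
Your proof is correct. One important point of comparison: the paper does not actually provide a proof of this lemma at all; the text says only that it (and the corresponding $SO(3)$ lemma) is ``adapted from'' Fulton--Harris, so the authors leave this as a cited fact. Your argument is a self-contained elementary derivation that fills in what the paper omits. The structure is sound: $O:=[\boldsymbol{\tau},\boldsymbol{n}]\in SO(2)$, conjugation $O^TUO\in SO(2)$, classification of $SO(2)$ as $\{U(\theta)\}$, rearranging to obtain \eqref{eq: rep of O 2d}, and then direct differentiation of $U(\theta)$ at $\theta=0$ to get \eqref{eq: dO at 0 2d}. Your parenthetical note that $SO(2)$ is abelian (so in fact $O^TUO=U$) is correct and a nice sanity check, though, as you say, unused.

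One small clarification is worth making explicit, since the lemma's statement elides it: in \eqref{eq: dO at 0 2d} the symbol $U$ on the left is being reinterpreted as the $\theta$-dependent family $U=U(\theta)$ determined by \eqref{eq: rep of O 2d} (equivalently $U=OU(\theta)O^T$), so the $\theta$-derivatives act on $[\boldsymbol{\tau},\boldsymbol{n}]U(\theta)=OU(\theta)$ and reduce to differentiating the explicit matrix in \eqref{eq: O angles 2d}. You state this correctly; just be aware that this is a change in what ``$U$'' denotes between the two halves of the lemma, and naming it prevents the reader from mistakenly differentiating a constant matrix. Your computed values $U(0)=I_2$, $U'(0)=\bigl[\begin{smallmatrix}0&1\\-1&0\end{smallmatrix}\bigr]$, $U''(0)=-I_2$ all match the paper's sign/transpose convention built into \eqref{eq: O angles 2d}, as you flagged.
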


\begin{lemma}[Group representation of $SO(3)$]\label{lem: rep}
For any $U\in SO(3)$, there exists $\boldsymbol{\Phi} = (\phi, \theta, \psi)^T \in [0,2\pi]^3$, such that
\begin{equation}\label{eq: rep of O}
  U \begin{bmatrix} \boldsymbol{\tau}_1, \boldsymbol{\tau}_2, \boldsymbol{n}\end{bmatrix}=\begin{bmatrix} \boldsymbol{\tau}_1, \boldsymbol{\tau}_2, \boldsymbol{n}\end{bmatrix}U(\boldsymbol{\Phi}),
\end{equation}
where $U(\boldsymbol{\Phi})$ is given as
\begin{equation*}
  {\begin{bmatrix}\cos \theta \cos \psi &-\cos \phi \sin \psi +\sin \phi \sin \theta \cos \psi &\sin \phi \sin \psi +\cos \phi \sin \theta \cos \psi \\\cos \theta \sin \psi &\cos \phi \cos \psi +\sin \phi \sin \theta \sin \psi &-\sin \phi \cos \psi +\cos \phi \sin \theta \sin \psi \\-\sin \theta &\sin \phi \cos \theta &\cos \phi \cos \theta \end{bmatrix}}.
\end{equation*}
Moreover, for $\beta, \varphi\in \{\phi, \theta, \psi\}$, we have
\begin{subequations}\label{eq: dO at 0}
\begin{align}
\label{eq: O at 0}
&U\begin{bmatrix} \boldsymbol{\tau}_1, \boldsymbol{\tau}_2, \boldsymbol{n}\end{bmatrix}\Big|_{\boldsymbol{\Phi}=\boldsymbol{0}}=\begin{bmatrix} \boldsymbol{\tau}_1, \boldsymbol{\tau}_2, \boldsymbol{n}\end{bmatrix}I_3,\\
\label{eq: Ophi at 0}
&\frac{\partial }{\partial \phi} U\begin{bmatrix} \boldsymbol{\tau}_1, \boldsymbol{\tau}_2, \boldsymbol{n}\end{bmatrix}\Big|_{\boldsymbol{\Phi}=\boldsymbol{0}}=\begin{bmatrix} \boldsymbol{\tau}_1, \boldsymbol{\tau}_2, \boldsymbol{n}\end{bmatrix}{\begin{bmatrix}0&0&0\\0&0&-1\\0&1&0 \end{bmatrix}},\\
\label{eq: Otheta at 0}
&\frac{\partial}{\partial \theta}U \begin{bmatrix} \boldsymbol{\tau}_1, \boldsymbol{\tau}_2, \boldsymbol{n}\end{bmatrix}\Big|_{\boldsymbol{\Phi}=\boldsymbol{0}}=\begin{bmatrix} \boldsymbol{\tau}_1, \boldsymbol{\tau}_2, \boldsymbol{n}\end{bmatrix}{\begin{bmatrix}0&0&1\\0&0&0\\-1&0&0 \end{bmatrix}},
\end{align}
\end{subequations}
\begin{subequations}\label{eq: dO at 01}
\begin{align}
\label{eq: Opsi at 0}
&\frac{\partial}{\partial \psi}U \begin{bmatrix} \boldsymbol{\tau}_1, \boldsymbol{\tau}_2, \boldsymbol{n}\end{bmatrix}\Big|_{\boldsymbol{\Phi}=\boldsymbol{0}}=\begin{bmatrix} \boldsymbol{\tau}_1, \boldsymbol{\tau}_2, \boldsymbol{n}\end{bmatrix}{\begin{bmatrix}0&-1&0\\1&0&0\\0&0&0 \end{bmatrix}},\\
\label{eq: Opsipsi at 0}
&\frac{\partial^2}{\partial \psi^2}U \begin{bmatrix} \boldsymbol{\tau}_1, \boldsymbol{\tau}_2, \boldsymbol{n}\end{bmatrix}\Big|_{\boldsymbol{\Phi}=\boldsymbol{0}}=\begin{bmatrix} \boldsymbol{\tau}_1, \boldsymbol{\tau}_2, \boldsymbol{n}\end{bmatrix}{\begin{bmatrix}-1&0&0\\0&-1&0\\0&0&0 \end{bmatrix}},\\
\label{eq: Oother at 0}
&\frac{\partial^2 }{\partial \beta\partial \varphi} \left(U\boldsymbol{\tau}_1\cdot \boldsymbol{n}\right)^2\Big|_{\boldsymbol{\Phi}=\boldsymbol{0}}=2\delta_{\beta\phi}\delta_{\varphi\phi}, \quad \frac{\partial^2 }{\partial \beta\partial \varphi} \left(U\boldsymbol{\tau}_2\cdot \boldsymbol{n}\right)^2\Big|_{\boldsymbol{\Phi}=\boldsymbol{0}}=2\delta_{\beta\theta}\delta_{\varphi\theta}.
\end{align}
\end{subequations}
Here $\delta$ is the Kronecker delta, i.e.,  $\delta_{ij} = 1$ if $i=j$, otherwise $0$.
\end{lemma}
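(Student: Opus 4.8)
\emph{Proof proposal.} The plan is to reduce the existence assertion to the classical surjectivity of the Euler-angle parametrization of $SO(3)$, and then to obtain the value at the origin and the derivative identities by direct differentiation of the explicit matrix $U(\boldsymbol{\Phi})$.

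First I would set $R := [\boldsymbol{\tau}_1,\boldsymbol{\tau}_2,\boldsymbol{n}]$. Since $\{\boldsymbol{\tau}_1,\boldsymbol{\tau}_2,\boldsymbol{n}\}$ is an orthonormal basis with $\det R = 1$, we have $R\in SO(3)$ and $R^{-1}=R^T$, so the claimed identity \eqref{eq: rep of O} is equivalent to $U(\boldsymbol{\Phi}) = R^T U R$. Because $SO(3)$ is a group, $R^T U R \in SO(3)$; hence the existence of some $\boldsymbol{\Phi}\in[0,2\pi]^3$ satisfying \eqref{eq: rep of O} is precisely the statement that $\boldsymbol{\Phi}\mapsto U(\boldsymbol{\Phi})$ maps $[0,2\pi]^3$ onto $SO(3)$.

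Next I would observe that the matrix displayed in the statement factors as the product of three elementary rotations, $U(\boldsymbol{\Phi}) = R_z(\psi)\,R_y(\theta)\,R_x(\phi)$, where $R_x(\phi),R_y(\theta),R_z(\psi)$ denote the standard rotations about the three coordinate axes; this is checked by one $3\times 3$ matrix multiplication. Surjectivity of the Euler-angle map onto $SO(3)$ is then the classical Euler-angle theorem (see~\cite{fulton2013representation}): given $V\in SO(3)$, one reads off $\theta$ from $V_{31}=-\sin\theta$; in the generic case $\cos\theta\neq 0$ one recovers $\psi$ from the first column of $V$ (via $V_{11}=\cos\theta\cos\psi$ and $V_{21}=\cos\theta\sin\psi$) and $\phi$ from the third row (via $V_{32}=\sin\phi\cos\theta$ and $V_{33}=\cos\phi\cos\theta$); in the gimbal-lock case $\cos\theta=0$ the product collapses to a single planar rotation, for which any consistent choice of the remaining angle works. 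Each of the three angles may then be shifted into $[0,2\pi]$ without changing $U(\boldsymbol{\Phi})$.

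Finally, for \eqref{eq: dO at 0}--\eqref{eq: dO at 01}, I would differentiate the explicit trigonometric entries of $U(\boldsymbol{\Phi})$ (equivalently, apply the Leibniz rule to the factorization $R_z(\psi)R_y(\theta)R_x(\phi)$) and evaluate at $\phi=\theta=\psi=0$: there $U(\boldsymbol{0})=I_3$ and the first-derivative matrices are the standard generators of $\mathfrak{so}(3)$, while the stated second derivatives follow from the second-order Taylor coefficients of the corresponding entries. For the two identities in \eqref{eq: Oother at 0} I would use $U\boldsymbol{\tau}_i\cdot\boldsymbol{n}=\boldsymbol{n}^T U\boldsymbol{\tau}_i=(R^T U R)_{3i}=(U(\boldsymbol{\Phi}))_{3i}$, so that $(U\boldsymbol{\tau}_1\cdot\boldsymbol{n})^2$ and $(U\boldsymbol{\tau}_2\cdot\boldsymbol{n})^2$ are squares of single entries of the third row of $U(\boldsymbol{\Phi})$, whose Hessians at the origin are read off from their quadratic Taylor expansions. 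I do not expect a genuine obstacle here: the only mildly delicate point is surjectivity in the degenerate configuration $\cos\theta=0$, which is classical, and everything else is a finite amount of elementary trigonometric differentiation.
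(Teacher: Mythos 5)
Your proposal is correct and complete in all the ways that matter, and in fact more explicit than the paper: for this lemma the paper offers no argument at all, merely citing \cite{fulton2013representation}, whereas you spell out the standard Euler-angle surjectivity argument together with the elementary differentiations. The factorization $U(\boldsymbol{\Phi})=R_z(\psi)R_y(\theta)R_x(\phi)$ of the displayed matrix checks out, the reduction via $R:=[\boldsymbol{\tau}_1,\boldsymbol{\tau}_2,\boldsymbol{n}]\in SO(3)$ and $U(\boldsymbol{\Phi})=R^{T}UR$ is right, the gimbal-lock case is handled in the usual way, and the first and second derivatives in \eqref{eq: O at 0}--\eqref{eq: Opsipsi at 0} drop out of the factorization at the origin exactly as you say.

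One thing you should be prepared for if you actually carry out the last step. Your identification $U\boldsymbol{\tau}_i\cdot\boldsymbol{n}=(R^{T}UR)_{3i}=(U(\boldsymbol{\Phi}))_{3i}$ is correct, and the third row of the displayed matrix is $(-\sin\theta,\ \sin\phi\cos\theta,\ \cos\phi\cos\theta)$. Hence
\begin{equation*}
\bigl(U\boldsymbol{\tau}_1\cdot\boldsymbol{n}\bigr)^2=\sin^2\theta,\qquad
\bigl(U\boldsymbol{\tau}_2\cdot\boldsymbol{n}\bigr)^2=\sin^2\phi\cos^2\theta,
\end{equation*}
whose quadratic Taylor coefficients at $\boldsymbol{\Phi}=\boldsymbol{0}$ give
\begin{equation*}
\frac{\partial^2}{\partial\beta\partial\varphi}\bigl(U\boldsymbol{\tau}_1\cdot\boldsymbol{n}\bigr)^2\Big|_{\boldsymbol{\Phi}=\boldsymbol{0}}=2\delta_{\beta\theta}\delta_{\varphi\theta},\qquad
\frac{\partial^2}{\partial\beta\partial\varphi}\bigl(U\boldsymbol{\tau}_2\cdot\boldsymbol{n}\bigr)^2\Big|_{\boldsymbol{\Phi}=\boldsymbol{0}}=2\delta_{\beta\phi}\delta_{\varphi\phi}.
\end{equation*}
This is the \emph{opposite} pairing of $\phi$ and $\theta$ from the one printed in \eqref{eq: Oother at 0}, i.e., the two Kronecker-delta subscripts are transposed in the lemma as stated. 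The discrepancy is a typo in the paper and is harmless where the lemma is used: in the Hessian computation for $\det(M_4(U,\alpha))$ in Lemma \ref{lem: F4 near 0} only the sum $2\delta_{\beta\phi}\delta_{\varphi\phi}+2\delta_{\beta\theta}\delta_{\varphi\theta}$ enters, which is invariant under the swap. So your strategy is sound; just be aware that a faithful execution of it will produce the corrected version of \eqref{eq: Oother at 0} rather than the version printed, and you should say so rather than force your calculation to match the text.
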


With the adept application of Lemma \ref{lem: jacobi} for the gradient and Hessian matrix of a determinant, and the proper group representation for $SO(d)$ (Lemma \ref{lem: rep 2d} for $d=2$ and Lemma \ref{lem: rep} for $d=3$), the challenging case of $\det(A(\cdot, k_{m, U}))=0$ is handled in Lemma \ref{lem: F3 near 0 2d} for $d=2$ and Lemma \ref{lem: F4 near 0} for $d=3$.

\medskip
\subsection{Existence of the minimal stabilizing function in 2D}
We denote the leading principle minors of $\tilde{M}(U, \alpha)$ as $\tilde{M}_1(U, \alpha), \tilde{M}_2(U, \alpha)$, respectively.

Now we are going to prove the existence of $k_0(\boldsymbol{n})$ by applying Lemma \ref{lem: compactness}. 

\begin{lemma}\label{lem: F1 and M1 2d}
For any $\gamma(\boldsymbol{p})\in C^2(\mathbb{R}^2_*)$, there exists a $k_{1}<\infty$, such that $\forall U\in SO(2), \alpha\geq k_1$, there holds 
\begin{equation}\label{eq: F1 and M1 global 2d}
    \tilde{M}_1(U, \alpha)\text{ is positive-definite}.
\end{equation}
\end{lemma}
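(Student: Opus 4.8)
The plan is short, since \eqref{eq: F1 and M1 global 2d} is the mildest of the three hypotheses needed to invoke Lemma~\ref{lem: compactness}; concretely it verifies condition~(ii) there, with $d=2$, $m=2$, so that the relevant leading minor is $A_{m-1}=\tilde{M}_1$. First I would simply unwind the definition: for the $2\times 2$ matrix $\tilde{M}(U,\alpha)$ in \eqref{eq: def of M 2d}, the first leading principal minor $\tilde{M}_1(U,\alpha)$ is nothing but its $(1,1)$-entry,
\begin{equation*}
  \tilde{M}_1(U,\alpha)=\gamma(\boldsymbol{n})+\alpha\,(U\boldsymbol{\tau}_1\cdot\boldsymbol{n})^2 .
\end{equation*}

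Next I would note that the unit normal $\boldsymbol{n}\in\mathbb{S}^{1}$ is fixed throughout the argument, so $\gamma(\boldsymbol{n})$ is a fixed strictly positive constant by the standing assumption that the anisotropic surface energy density satisfies $\gamma>0$, while $\alpha\,(U\boldsymbol{\tau}_1\cdot\boldsymbol{n})^2\ge 0$ for every $U\in SO(2)$ and every $\alpha\ge 0$. Hence $\tilde{M}_1(U,\alpha)\ge\gamma(\boldsymbol{n})>0$, which is precisely the positive-definiteness of the $1\times 1$ matrix $\tilde{M}_1(U,\alpha)$. Since this lower bound is uniform in $U$ and holds for all $\alpha\ge 0$, any nonnegative constant serves; one may simply take $k_1=0$, which is certainly finite.

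There is no genuine obstacle in this step: the only ingredient is the positivity of the surface energy density, and the $C^2$-regularity hypothesis in \eqref{eq: energy stable condition} is not used here. It enters only in the more delicate conditions attached to $\tilde{M}$, namely condition~(iii) of Lemma~\ref{lem: compactness} — the local nonnegativity of $\det\tilde{M}(\cdot,k_{2,U})$ — which is treated separately in Lemmas~\ref{lem: F4 and M4 2d} and~\ref{lem: F3 near 0 2d}, the latter handling via Jacobi's formula and the $SO(2)$-parametrization of Lemma~\ref{lem: rep 2d} the borderline case where this determinant vanishes.
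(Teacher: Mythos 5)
Your proof is correct and takes essentially the same approach as the paper: both identify $\tilde{M}_1(U,\alpha)$ as the $(1,1)$-entry $\gamma(\boldsymbol{n})+\alpha\,(U\boldsymbol{\tau}_1\cdot\boldsymbol{n})^2$, observe it is strictly positive for all $\alpha\ge 0$ and $U\in SO(2)$ since $\gamma>0$, and take $k_1=0$.
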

\begin{proof}
We choose $k_1=0$. It is easy to check $\det(\tilde{M}_1(U, \alpha))=\gamma(\boldsymbol{n})+\alpha(U\boldsymbol{\tau}\cdot \boldsymbol{n})^2> 0$, and thus we know that $\tilde{M}_1(U, \alpha)$ is positive-definite.
\end{proof}

\begin{lemma}\label{lem: F4 and M4 2d}
For any $\gamma(\boldsymbol{p})\in C^2(\mathbb{R}^2_*)$ with $\gamma(-\boldsymbol{n})<3\gamma(\boldsymbol{n})$ and $\forall I_2 \neq U\in SO(2)$, there exists a constant $k_1\leq k_{2, U}<\infty$ with the open neighbourhood $\mathcal{U}_{U}$ of $U$, such that 
\begin{equation}\label{eq: F4 and M4 global 2d}
    \det(\tilde{M}_2(\tilde{U}, k_{2, U}))\geq 0, \quad \forall \tilde{U}\in \mathcal{U}_{U}.
\end{equation}
\end{lemma}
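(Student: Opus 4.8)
The plan is to coordinatize $SO(2)$ by a single angle using the group representation Lemma~\ref{lem: rep 2d}, write the determinant of $\tilde M$ out explicitly in this coordinate, and then split into two cases according to whether the $(1,1)$-entry of $\tilde M$ genuinely grows with $\alpha$. Note that $\tilde M_2=\tilde M$ is the full $2\times 2$ matrix, and abbreviate $\boldsymbol{\tau}=\boldsymbol{\tau}_1$. By Lemma~\ref{lem: rep 2d}, every $U\in SO(2)$ is represented by some $\theta\in[0,2\pi]$ with $U[\boldsymbol{\tau},\boldsymbol{n}]=[\boldsymbol{\tau},\boldsymbol{n}]U(\theta)$, and $U=I_2$ precisely when $\theta\in\{0,2\pi\}$. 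Reading off columns gives $U\boldsymbol{\tau}\cdot\boldsymbol{n}=-\sin\theta$, $U\boldsymbol{\tau}\cdot\boldsymbol{\tau}=\cos\theta$, and $U\boldsymbol{n}=\sin\theta\,\boldsymbol{\tau}+\cos\theta\,\boldsymbol{n}$ (a unit vector).

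Since $\tilde M(U,\alpha)$ is symmetric with diagonal $\bigl(\gamma(\boldsymbol{n})+\alpha\sin^2\theta,\ \gamma(\boldsymbol{n})\bigr)$ and off-diagonal entry $-\frac12\bigl(\gamma(\boldsymbol{n})\cos\theta-\sin\theta\,(\boldsymbol{\tau}\cdot\boldsymbol{\xi})+\gamma(U\boldsymbol{n})\bigr)$, one computes
\[
\det\tilde M(U,\alpha)=\gamma(\boldsymbol{n})^2+\alpha\,\gamma(\boldsymbol{n})\sin^2\theta-\frac14\Bigl(\gamma(\boldsymbol{n})\cos\theta-\sin\theta\,(\boldsymbol{\tau}\cdot\boldsymbol{\xi})+\gamma(U\boldsymbol{n})\Bigr)^2 .
\]
The coefficient $\gamma(\boldsymbol{n})\sin^2\theta$ of $\alpha$ is nonnegative, and the subtracted square is bounded by a constant independent of $\alpha$, since $\gamma$ is continuous on $\mathbb{S}^{1}$ and $\boldsymbol{\tau}\cdot\boldsymbol{\xi}$ is a fixed scalar.

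I would then distinguish two cases. If $\sin\theta\neq0$, the term $\alpha\,\gamma(\boldsymbol{n})\sin^2\theta$ tends to $+\infty$ while the subtracted square stays bounded, so there is a finite $k_{2,U}\geq k_1$ with $\det\tilde M(U,k_{2,U})>0$. If $\sin\theta=0$ but $U\neq I_2$, then necessarily $\theta=\pi$, so $U=-I_2$, $U\boldsymbol{n}=-\boldsymbol{n}$, $U\boldsymbol{\tau}\cdot\boldsymbol{\tau}=-1$; the $\alpha$-term vanishes and $\det\tilde M(-I_2,\alpha)=\gamma(\boldsymbol{n})^2-\frac14\bigl(\gamma(-\boldsymbol{n})-\gamma(\boldsymbol{n})\bigr)^2$, which is strictly positive because $\gamma>0$ forces $\gamma(-\boldsymbol{n})-\gamma(\boldsymbol{n})>-2\gamma(\boldsymbol{n})$ while $\gamma(-\boldsymbol{n})<3\gamma(\boldsymbol{n})$ forces $\gamma(-\boldsymbol{n})-\gamma(\boldsymbol{n})<2\gamma(\boldsymbol{n})$, hence $\bigl(\gamma(-\boldsymbol{n})-\gamma(\boldsymbol{n})\bigr)^2<4\gamma(\boldsymbol{n})^2$; here one takes $k_{2,U}=k_1$. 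In both cases $k_1\leq k_{2,U}<\infty$ and $\det\tilde M(U,k_{2,U})>0$, and since $\gamma\in C^2(\mathbb{R}^2_*)$ the map $\tilde U\mapsto\det\tilde M(\tilde U,k_{2,U})$ is continuous on $SO(2)$; it therefore stays positive, a fortiori $\geq0$, on an open neighbourhood $\mathcal{U}_U$ of $U$, which is precisely \eqref{eq: F4 and M4 global 2d}.

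I do not anticipate a serious obstacle here: this is the ``easy'' branch of condition~(iii) in Lemma~\ref{lem: compactness}, where $\det>0$ and continuity closes the argument. The one point requiring care is that $\sin\theta=0$ has the two roots $\theta=0$ and $\theta=\pi$: the excluded value $\theta=0$ (that is, $U=I_2$) is exactly the degenerate configuration in which $\det\tilde M$ vanishes for every $\alpha$, which is why it must be treated separately in Lemma~\ref{lem: F3 near 0 2d}, whereas at $\theta=\pi$ the hypothesis $\gamma(-\boldsymbol{n})<3\gamma(\boldsymbol{n})$ enters essentially — with equality it would yield only $\det=0$, the harder borderline case.
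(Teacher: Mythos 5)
Your proof is correct and takes essentially the same route as the paper's: split on whether $(U\boldsymbol{\tau}\cdot\boldsymbol{n})^2=\sin^2\theta$ vanishes, use linear growth in $\alpha$ in the generic case, use the strict inequality $\gamma(-\boldsymbol{n})<3\gamma(\boldsymbol{n})$ at $U=-I_2$ (your expression $\gamma(\boldsymbol{n})^2-\tfrac14(\gamma(-\boldsymbol{n})-\gamma(\boldsymbol{n}))^2$ factors to the paper's $\tfrac14(3\gamma(\boldsymbol{n})-\gamma(-\boldsymbol{n}))(\gamma(\boldsymbol{n})+\gamma(-\boldsymbol{n}))$), and close by continuity. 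The only cosmetic difference is that you make the angle parametrization and the determinant formula fully explicit, where the paper records only the asymptotic form $\gamma(\boldsymbol{n})(U\boldsymbol{\tau}\cdot\boldsymbol{n})^2\alpha+\mathcal{O}(1)$.
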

\begin{proof}
First from Lemma \ref{lem: F1 and M1 2d}, we know that there exists a constant $ k_1\geq 0$, such that $\tilde{M}_1(U, \alpha)$ is positive-definite $\alpha\geq k_1$. 

Suppose $(U_0\boldsymbol{\tau} \cdot \boldsymbol{n})^2\neq 0$, we have 
\begin{equation}
    \det(\tilde{M}_2(U_0, \alpha))=\gamma(\boldsymbol{n})(U_0\boldsymbol{\tau}\cdot \boldsymbol{n})^2\alpha+\mathcal{O}(1).
\end{equation}
Thus for such $U_0$, there exists a constant $k_1\leq k_{2, U_0}<\infty$ and an open neighbourhood $\mathcal{U}_{U_0}$ of $U_0$, such that $\det(\tilde{M}_2(U, k_{2, U_0}))\geq 0, \forall U\in \mathcal{U}_{U_0}$.

If $(U_1\boldsymbol{\tau} \cdot \boldsymbol{n})^2= 0$, we know that $U_1\boldsymbol{n}=\pm \boldsymbol{n}$. If $U_1\boldsymbol{n}=\boldsymbol{n}$, then it must be $I_2$. So the last case is $U_1\boldsymbol{n}=-\boldsymbol{n}$. From the fact $\gamma(-\boldsymbol{n})<3\gamma(\boldsymbol{n})$ and $U_1\boldsymbol{\tau}=-\boldsymbol{\tau}$, we have
\begin{equation}
    \det(\tilde{M}_2(U_1, \alpha))=\frac{3\gamma(\boldsymbol{n})-\gamma(-\boldsymbol{n})}{4}(\gamma(\boldsymbol{n})+\gamma(-\boldsymbol{n}))>0.
\end{equation}
Thus there is an open neighbourhood $\mathcal{U}_{U_1}$ of $U_1$ and a $k_{2, U_1}<\infty$, such that $\forall U\in \mathcal{U}_{U_1}$, it holds $\det(\tilde{M}_2(U, k_{2, U_1}))\geq 0$. 
\end{proof}

To discuss $U$ near $I_2$, by using Lemma \ref{lem: rep 2d}, it suffices to consider the $U=U(\theta)$ when $\theta$ near $0$.

\begin{lemma}\label{lem: F3 near 0 2d}
For any $\gamma(\boldsymbol{p})\in C^2(\mathbb{R}^2_*)$, there exists a constant $k_1\leq k_{2,I_2}<\infty$ with the open neighbourhood $\mathcal{U}$ of $U(0)=I_2$ such that 
\begin{equation}\label{eq: F3 near 0 2d}
    \det(\tilde{M}_2(U, k_{2,I_2}))\geq 0,\quad \forall U\in \mathcal{U}.
\end{equation}
\end{lemma}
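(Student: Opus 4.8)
The plan is to verify condition~(iii) of Lemma~\ref{lem: compactness} (with $A=\tilde M$) at the single remaining matrix $U=I_2$, which is precisely the degenerate case: using $\gamma(\boldsymbol{n})=\boldsymbol{\xi}\cdot\boldsymbol{n}$ one checks that $\det\bigl(\tilde M_2(I_2,\alpha)\bigr)=\gamma(\boldsymbol{n})^2-\bigl(\frac{1}{2}(\gamma(\boldsymbol{n})+0+\gamma(\boldsymbol{n}))\bigr)^2=0$ for \emph{every} $\alpha$, so continuity of the determinant alone is not enough. Instead I would show that, for $k_{2,I_2}$ chosen large enough, $U=I_2$ is a local minimum of $U\mapsto\det\bigl(\tilde M_2(U,k_{2,I_2})\bigr)$. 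By the group representation of $SO(2)$ in Lemma~\ref{lem: rep 2d} it suffices to analyse the scalar function $g(\theta):=\det\bigl(\tilde M_2(U(\theta),\alpha)\bigr)$ for $\theta$ near $0$, where $U(\theta)\boldsymbol{\tau}=\cos\theta\,\boldsymbol{\tau}-\sin\theta\,\boldsymbol{n}$ and $U(\theta)\boldsymbol{n}=\sin\theta\,\boldsymbol{\tau}+\cos\theta\,\boldsymbol{n}$, and then pull an interval around $\theta=0$ back to an open neighbourhood of $I_2$ in $SO(2)$.

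Expanding the $2\times2$ determinant explicitly (a direct computation suffices here; Jacobi's formula of Lemma~\ref{lem: jacobi} is only needed for $d=3$), one obtains
\[
g(\theta)=\gamma(\boldsymbol{n})^2+\alpha\,\gamma(\boldsymbol{n})\,(U(\theta)\boldsymbol{\tau}\cdot\boldsymbol{n})^2-b(\theta)^2,\qquad b(\theta):=\tfrac12\bigl(\gamma(\boldsymbol{n})(U(\theta)\boldsymbol{\tau}\cdot\boldsymbol{\tau})+(U(\theta)\boldsymbol{\tau}\cdot\boldsymbol{n})(\boldsymbol{\tau}\cdot\boldsymbol{\xi})+\gamma(U(\theta)\boldsymbol{n})\bigr).
\]
I would then compute the first two derivatives at $\theta=0$. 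Since $U(0)\boldsymbol{\tau}\cdot\boldsymbol{n}=0$, the $\alpha$-term and its first $\theta$-derivative vanish at $\theta=0$, while $b(0)=\gamma(\boldsymbol{n})$; hence $g(0)=0$ and $g'(0)=-2\gamma(\boldsymbol{n})\,b'(0)$. The crucial point is $b'(0)=0$: by the chain rule and \eqref{eq: def of xi}, $\frac{d}{d\theta}\gamma(U(\theta)\boldsymbol{n})\big|_{\theta=0}=\nabla\gamma(\boldsymbol{n})\cdot\frac{d}{d\theta}(U(\theta)\boldsymbol{n})\big|_{\theta=0}=\boldsymbol{\xi}\cdot\boldsymbol{\tau}$, which cancels exactly the contribution $-(\boldsymbol{\tau}\cdot\boldsymbol{\xi})$ produced by differentiating $(U(\theta)\boldsymbol{\tau}\cdot\boldsymbol{n})(\boldsymbol{\tau}\cdot\boldsymbol{\xi})=-\sin\theta\,(\boldsymbol{\tau}\cdot\boldsymbol{\xi})$, the term $\gamma(\boldsymbol{n})\cos\theta$ having zero derivative at $0$. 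For the second derivative, $(U(\theta)\boldsymbol{\tau}\cdot\boldsymbol{n})^2=\sin^2\theta$ contributes $2\alpha\gamma(\boldsymbol{n})$ and $-\bigl(b(\theta)^2\bigr)''\big|_{\theta=0}=-2\gamma(\boldsymbol{n})\,b''(0)$, so that
\[
g''(0)=2\gamma(\boldsymbol{n})\bigl(\alpha-b''(0)\bigr),
\]
where $b''(0)$ is a finite constant; this is the only place the regularity $\gamma(\boldsymbol{p})\in C^2(\mathbb{R}^2_*)$ is used, since $b''(0)$ involves $\nabla^2\gamma(\boldsymbol{n})$ together with the second derivatives of $\theta\mapsto U(\theta)\boldsymbol{n}$ furnished by Lemma~\ref{lem: rep 2d}.

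Finally I would fix any finite $k_{2,I_2}$ with $k_{2,I_2}\geq k_1$ (the constant of Lemma~\ref{lem: F1 and M1 2d}, so that condition~(iii)'s requirement $k_{m,U}\geq k_{m-1}$ holds) and $k_{2,I_2}>b''(0)$; then $g''(0)>0$. Combined with $g(0)=g'(0)=0$, Taylor's theorem yields $\varepsilon>0$ with $g(\theta)\geq0$ for $|\theta|\leq\varepsilon$, and transporting this interval through the local homeomorphism $\theta\mapsto U(\theta)$ produces the desired open neighbourhood $\mathcal{U}$ of $I_2$ in $SO(2)$ on which $\det\bigl(\tilde M_2(U,k_{2,I_2})\bigr)\geq0$. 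The main obstacle is precisely the verification $g'(0)=0$: without the cancellation forced by $\nabla\gamma(\boldsymbol{n})=\boldsymbol{\xi}$, the point $\theta=0$ would fail to be critical and no size of the stabilizing constant could restore nonnegativity; everything afterwards is a routine second-derivative test, made to work by the fact that the coefficient of $\alpha$ in $g''(0)$ is the strictly positive number $2\gamma(\boldsymbol{n})$.
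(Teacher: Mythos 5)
Your proposal is correct and follows essentially the same route as the paper: parametrize $U=U(\theta)$ via Lemma~\ref{lem: rep 2d}, observe $g(0)=g'(0)=0$ with the crucial cancellation $\boldsymbol{\xi}\cdot\boldsymbol{\tau}$ in $g'(0)$ coming from $\nabla\gamma(\boldsymbol{n})=\boldsymbol{\xi}$, show $g''(0)=\gamma(\boldsymbol{n})\bigl(2\alpha+2\gamma(\boldsymbol{n})-\boldsymbol{\tau}\cdot({\bf H}_{\gamma}(\boldsymbol{n})\boldsymbol{\tau})\bigr)$ is an increasing affine function of $\alpha$, pick $\alpha$ large, and invoke continuity plus Taylor's theorem. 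The only (cosmetic) deviation is that you expand the $2\times2$ determinant by hand instead of calling on Jacobi's formula (Lemma~\ref{lem: jacobi}), which the paper uses uniformly for $d=2$ and $d=3$; your shortcut is legitimate in 2D and yields the same expression for $g''(0)$.
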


\begin{proof}
First by applying the chain rule, noticing , $\nabla \nabla \gamma(\boldsymbol{p})|_{\boldsymbol{p}
=\boldsymbol{n}}={\bf H}_{\gamma}(\boldsymbol{n})$, $\nabla \gamma(\boldsymbol{p})|_{\boldsymbol{p}=\boldsymbol{n}}=\boldsymbol{\xi}(\boldsymbol{n})$, 
together with \eqref{eq: dO at 0 2d}, we obtain that
\begin{subequations}\label{eq: gamma U basic 2d}
\begin{align}
    \label{eq: gamma U ori 2d}
    \gamma(U\boldsymbol{n})\Big|_{\theta = 0}&=\gamma(\boldsymbol{n}), \\
    \label{eq: gamma U dtheta 2d}
    \frac{d \gamma(U\boldsymbol{n})}{d \theta}\Big|_{\theta = 0}&=\boldsymbol{\xi}\cdot \boldsymbol{\tau},\\
    \label{eq: gamma U dthetatheta 2d}
    \frac{d^2 \gamma(U\boldsymbol{n})}{d \theta^2}\Big|_{\theta = 0}&=\left(\frac{d U}{d \theta}\Big|_{\theta = 0} \boldsymbol{n}\right)\cdot {\bf H}_{\gamma}(\boldsymbol{n})\cdot \left(\frac{d U}{d \theta}\Big|_{\theta = 0} \boldsymbol{n}\right)+\boldsymbol{\xi}\cdot \left(\frac{d^2 U}{d \theta^2}\Big|_{\theta = 0} \boldsymbol{n}\right)\nonumber\\
    &=\boldsymbol{\tau}\cdot ({\bf H}_{\gamma}(\boldsymbol{n}) \boldsymbol{\tau})-\gamma(\boldsymbol{n}).
\end{align}
\end{subequations}
By the definition of $\tilde{M}_2(U, \alpha)$, \eqref{eq: dO at 0 2d}, \eqref{eq: gamma U basic 2d}, and the definition of adjunct matrix,  we know that
\begin{subequations}\label{eq: A basic 2d}
\begin{align}
    \label{eq: A basic ori 2d}
    &\tilde{M}_2(U, \alpha)\Big|_{\theta = 0}=\gamma(\boldsymbol{n})\begin{bmatrix}
    1&-1\\-1&1
    \end{bmatrix}, \\
    \label{eq: A basic adj 2d}
    &\text{adj}(\tilde{M}_2(U, \alpha))\Big|_{\theta = 0}=\gamma(\boldsymbol{n})\begin{bmatrix}
    1\\1
    \end{bmatrix}\begin{bmatrix}
    1&1
    \end{bmatrix},\\
    \label{eq: A basic dtheta 2d}
    &\frac{d \tilde{M}_2(U, \alpha)}{d \theta}\Big|_{\theta = 0}=\begin{bmatrix}
    0&0\\0&0
    \end{bmatrix},\\
    \label{eq: A basic dthetatheta 2d}
    &\frac{d^2 \tilde{M}_2(U, \alpha)}{d \theta^2}\Big|_{\theta = 0}=\begin{bmatrix}
    2\alpha&*\\-\frac{1}{2}(-2\gamma(\boldsymbol{n})+\boldsymbol{\tau}\cdot ({\bf H}_{\gamma}(\boldsymbol{n}) \boldsymbol{\tau}))&0
    \end{bmatrix}.
\end{align}
\end{subequations}
\eqref{eq: jacobi 1}, \eqref{eq: jacobi 2} in Lemma \ref{lem: jacobi} and \eqref{eq: A basic ori 2d}-\eqref{eq: A basic dthetatheta 2d} suggest that
\begin{equation}\label{eq: F3 ori and first 2d}
    \det(\tilde{M}_2((U, \alpha)))\Big|_{\theta = 0}=0, \quad \frac{d \det(\tilde{M}_2((U, \alpha)))}{d\theta}\Big|_{\theta = 0}=0,
\end{equation}
and
\begin{equation}\label{eq: F3 Hessian 2d}
    \frac{d^2 \det(\tilde{M}_2(U, \alpha))}{d\theta^2}\Big|_{\theta = 0}=\gamma(\boldsymbol{n})\left(2\alpha+2\gamma(\boldsymbol{n})-\boldsymbol{\tau}\cdot ({\bf H}_{\gamma}(\boldsymbol{n}) \boldsymbol{\tau})\right).
\end{equation}
\eqref{eq: F3 Hessian 2d} implies that there exists a $k_1\leq k_{2, I_2}<\infty$, such that $\frac{d^2 \det(\tilde{M}_2(U, k_{2, I_2}))}{d\theta^2}\Big|_{\theta = 0}>0$. By the continuity of $\frac{d^2 \det(\tilde{M}_2(U, k_{2, I_2}))}{d\theta^2}$, we know that there exists an open neighbourhood $\mathcal{U}$ of $I_2$, such that $\frac{d^2 \det(\tilde{M}_2(U, k_{2, I_2}))}{d\theta^2}\geq 0, \forall U\in \mathcal{U}$. Thus by Taylor expansion and \eqref{eq: F3 ori and first 2d}, we know that there exists a $\det(\tilde{M}_2(U, k_{2,I_2}))\geq 0, \forall  U\in \mathcal{U}$, which validates \eqref{eq: F3 near 0 2d}. 
\end{proof}

\medskip

{\em Proof of Theorem \ref{thm: existence of k0} in 2D}. The condition (ii) \eqref{eq: An-1 is spd} is proved by Lemma \ref{lem: F1 and M1 2d}, and the condition (iii) \eqref{eq: local spd} is the result of Lemma \ref{lem: F4 and M4 2d} and Lemma \ref{lem: F3 near 0 2d}. 

For the condition (i), it is obvious that $\tilde{M}_2(U, \alpha)=\tilde{M}_2(U, 0)+\alpha\tilde{D}$, where $\tilde{D}=\text{diag}(\left(U\boldsymbol{\tau}\cdot \boldsymbol{n}\right)^2, 0)$ is positive semi-definite. 
Therefore by Lemma \ref{lem: compactness}, we derive that 
\begin{equation}
    k_2<\infty, \quad k_2  \in \Big\{\alpha\Big|\,\,\tilde{M}(U, \alpha) \text{ is positive semi-definite }\quad \forall U\in SO(2)\Big\}.
\end{equation}
Thus such a set is nonempty. On the other hand, let $U\boldsymbol{\tau}\cdot \boldsymbol{n}=1$ and $\tilde{\alpha}=-2\gamma(\boldsymbol{n})$. We know that $\det(\tilde{M}_1(U, \tilde{\alpha}))=\gamma(\boldsymbol{n})+\tilde{\alpha}(U\boldsymbol{\tau}\cdot \boldsymbol{n})^2=-\gamma(\boldsymbol{n})<0$, and the set is also bounded below. Therefore the set has a finite infimum $k_0(\boldsymbol{n})$.

\subsection{Existence of the minimal stabilizing function in 3D}
Similarly, we denote the leading principle minors of $M(U, \alpha)$ are denoted as $M_1(U, \alpha), M_2(U, \alpha), M_3(U, \alpha)$ and $M_4(U, \alpha)$, respectively.

To apply Lemma \ref{lem: compactness}, we first need to show $M_3(U, \alpha)$ is positive-definite for large enough $\alpha$.
 
\begin{lemma}\label{lem: F1 and M1}
For any $\gamma(\boldsymbol{p})\in C^2(\mathbb{R}^3_*)$ with $\gamma(-\boldsymbol{n})<2\gamma(\boldsymbol{n})$, there exists a constant  $k_{3}<\infty$, such that $\forall U\in SO(3), \alpha\geq k_3$, there holds 
\begin{equation}\label{eq: F1 and M1 global}
    M_3(U, \alpha)\text{ is positive-definite}.
\end{equation}
\end{lemma}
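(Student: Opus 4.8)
The plan is to show that the symmetric $3\times3$ leading principal submatrix $M_3(U,\alpha)$ is positive-definite by Sylvester's criterion, exhibiting a single threshold $k_3$ valid for every $U\in SO(3)$. Abbreviate $\gamma_n:=\gamma(\boldsymbol{n})$, $a_i:=U\boldsymbol{\tau}_i\cdot\boldsymbol{n}$ for $i=1,2$, and $c:=U\boldsymbol{n}\cdot\boldsymbol{n}$; since $\{U\boldsymbol{\tau}_1,U\boldsymbol{\tau}_2,U\boldsymbol{n}\}$ is an orthonormal basis of $\mathbb{R}^3$, expanding $\boldsymbol{n}$ in it gives the identity $a_1^2+a_2^2=1-c^2$. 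The first leading minor of $M_3(U,\alpha)$ is $\gamma_n+\alpha a_1^2\ge\gamma_n>0$ for all $\alpha\ge0$. Reading the entries of $M_3(U,\alpha)$ off \eqref{eq: def of M} — its $(2,3)$-entry is $0$, its $(1,3)$-entry is $\alpha a_1a_2$, its $(1,2)$-entry is $-\frac{1}{2}\gamma(U\boldsymbol{n})$ — and expanding $\det M_3$ along its last row, a short computation gives the factorisation
\begin{equation*}
  \det\big(M_3(U,\alpha)\big)=\big(\gamma_n+\alpha a_2^2\big)\,h(U,\alpha),\qquad
  h(U,\alpha):=\gamma_n^2+\alpha\gamma_n(1-c^2)-\frac{1}{4}\gamma(U\boldsymbol{n})^2,
\end{equation*}
while the $2\times2$ leading principal minor equals $h(U,\alpha)+\alpha^2a_1^2a_2^2\ge h(U,\alpha)$. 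As $\gamma_n+\alpha a_2^2>0$ always, the lemma reduces to producing $k_3<\infty$ such that $h(U,\alpha)>0$ for all $U\in SO(3)$ and $\alpha\ge k_3$.

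To prove this, note that $h$ depends on $U$ only through $\boldsymbol{m}:=U\boldsymbol{n}$, and that $\boldsymbol{m}$ ranges over all of $\mathbb{S}^{2}$ as $U$ ranges over $SO(3)$; thus it suffices to show $\gamma_n^2-\frac{1}{4}\gamma(\boldsymbol{m})^2+\alpha\gamma_n\big(1-(\boldsymbol{m}\cdot\boldsymbol{n})^2\big)>0$ for all $\boldsymbol{m}\in\mathbb{S}^{2}$ once $\alpha$ is large. The coefficient $1-(\boldsymbol{m}\cdot\boldsymbol{n})^2$ of $\alpha$ is nonnegative and vanishes exactly at $\boldsymbol{m}=\pm\boldsymbol{n}$, so I would split $\mathbb{S}^{2}$ into a small open neighbourhood $N_+$ of $\boldsymbol{n}$, a small open neighbourhood $N_-$ of $-\boldsymbol{n}$, and the compact remainder. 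On $N_+$, continuity of $\gamma$ keeps the $\alpha$-free part $\gamma_n^2-\frac{1}{4}\gamma(\boldsymbol{m})^2$ near $\frac{3}{4}\gamma_n^2>0$, hence bounded below by a positive constant, and the $\alpha$-term only helps. On $N_-$, the same part stays near $\gamma_n^2-\frac{1}{4}\gamma(-\boldsymbol{n})^2$, which is strictly positive precisely because of the hypothesis $\gamma(-\boldsymbol{n})<2\gamma(\boldsymbol{n})$; so $h(U,\alpha)>0$ on $N_-$ for every $\alpha\ge0$. On the compact set $\mathbb{S}^{2}\setminus(N_+\cup N_-)$, the continuous function $1-(\boldsymbol{m}\cdot\boldsymbol{n})^2$ attains a positive minimum $\delta_0>0$ while $\gamma(\boldsymbol{m})\le\gamma_{\max}:=\max_{\mathbb{S}^{2}}\gamma<\infty$, so $h(U,\alpha)\ge\gamma_n^2-\frac{1}{4}\gamma_{\max}^2+\alpha\gamma_n\delta_0$, which is positive as soon as $\alpha>(\frac{1}{4}\gamma_{\max}^2-\gamma_n^2)/(\gamma_n\delta_0)$ (and if this right-hand side is $\le0$, any $\alpha\ge0$ works). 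Taking $k_3$ to be this threshold (or $0$, whichever is larger), Sylvester's criterion yields that $M_3(U,\alpha)$ is positive-definite for all $U\in SO(3)$ and $\alpha\ge k_3$.

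The one genuinely delicate feature is the loss of uniformity of the linear-in-$\alpha$ term at the two poles $\boldsymbol{m}=\pm\boldsymbol{n}$: increasing $\alpha$ cannot help there, so the $\alpha$-free remainder $\gamma_n^2-\frac{1}{4}\gamma(\boldsymbol{m})^2$ must already be positive on neighbourhoods of both. At $\boldsymbol{m}=\boldsymbol{n}$ this holds automatically, and at $\boldsymbol{m}=-\boldsymbol{n}$ it is exactly the assumed inequality $\gamma(-\boldsymbol{n})<2\gamma(\boldsymbol{n})$ — in line with the necessity noted in Remark \ref{remark: 4.1}. (Alternatively, one could route $h$ through a compactness argument in the spirit of Lemma \ref{lem: compactness}, but since $h$ is bounded away from $0$ on $\mathbb{S}^{2}$ for $\alpha\ge k_3$ the elementary finite cover above is enough.) Everything else — the cofactor expansion leading to the displayed factorisation of $\det(M_3)$, and the identity $a_1^2+a_2^2=1-c^2$ — is entirely routine.
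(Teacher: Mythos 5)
Your proposal is correct and follows the same line of attack as the paper: Sylvester's criterion, the same explicit expressions for $\det(M_1)$, $\det(M_2)$ and the factorisation $\det(M_3)=(\gamma(\boldsymbol{n})+\alpha a_2^2)\,h$, and the same two-pole-plus-compact-remainder case split, with $\gamma(-\boldsymbol{n})<2\gamma(\boldsymbol{n})$ entering exactly at $\boldsymbol{m}=-\boldsymbol{n}$. The only difference is cosmetic: you observe that $h$ depends on $U$ only through $\boldsymbol{m}=U\boldsymbol{n}$ and so run the compactness argument directly on $\mathbb{S}^2$, whereas the paper takes an open cover of $SO(3)$; both amount to the same thing.
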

\begin{proof}
By checking the leading principle minors, $M_3(U, \alpha)$ is positive-definite if and only if $\det(M_1), \det(M_2), \det(M_3)>0$. It is easy to verify that
\begin{subequations}
\begin{align*}
    \det(M_1(U, \alpha))&=\gamma(\boldsymbol{n})+\alpha(U\boldsymbol{\tau}_1\cdot \boldsymbol{n})^2, \\
    \det(M_2(U, \alpha))&= \alpha^2 (U\boldsymbol{\tau}_1 \cdot \boldsymbol{n})^2(U\boldsymbol{\tau}_2 \cdot \boldsymbol{n})^2+\alpha ((U\boldsymbol{\tau}_1 \cdot \boldsymbol{n})^2+(U\boldsymbol{\tau}_2 \cdot \boldsymbol{n})^2)\gamma(\boldsymbol{n})\nonumber\\
    &\qquad +\frac{4\gamma(\boldsymbol{n})^2-\gamma(U\boldsymbol{n})^2}{4},\\
    \det(M_3(U, \alpha))&= \left(\alpha ((U\boldsymbol{\tau}_1 \cdot \boldsymbol{n})^2+(U\boldsymbol{\tau}_2 \cdot \boldsymbol{n})^2)\gamma(\boldsymbol{n})+\frac{4\gamma(\boldsymbol{n})^2-\gamma(U\boldsymbol{n})^2}{4}\right)\nonumber\\
    &\qquad (\gamma(\boldsymbol{n})+\alpha(U\boldsymbol{\tau}_2\cdot\boldsymbol{n})^2).
\end{align*}
\end{subequations}
Thus for $\alpha\geq 0$, we know $\det(M_1(U, \alpha))>0, \alpha^2 (U\boldsymbol{\tau}_1 \cdot \boldsymbol{n})^2(U\boldsymbol{\tau}_2 \cdot \boldsymbol{n})^2\geq 0$. Also, $\det(M_2(U, \alpha)), \det(M_3(U, \alpha))$ are nondecreasing with respect to $\alpha$. Moreover, if $\alpha ((U\boldsymbol{\tau}_1 \cdot \boldsymbol{n})^2+(U\boldsymbol{\tau}_2 \cdot \boldsymbol{n})^2)\gamma(\boldsymbol{n})+\frac{4\gamma(\boldsymbol{n})^2-\gamma(U\boldsymbol{n})^2}{4}>0$, we can deduce that \\ $\det(M_2(U, \alpha)), \det(M_3(U, \alpha))>0$.

Suppose $(U_1\boldsymbol{\tau}_1 \cdot \boldsymbol{n})^2+(U_1\boldsymbol{\tau}_2 \cdot \boldsymbol{n})^2>0$. Then for such $U_1\in SO(3)$, we know that there exists a constant $k_{3, U_1}\geq 0$ with an open neighbourhood $\mathcal{U}_{U_1}$ of $U_1$, such that for all $\tilde{U}\in \mathcal{U}_{U_1}$ and $\alpha>k_{3, U_1}$
\begin{equation}\label{eq: lem C1 aux 1}
    \alpha ((\tilde{U}_1\boldsymbol{\tau}_1 \cdot \boldsymbol{n})^2+(\tilde{U}_1\boldsymbol{\tau}_2 \cdot \boldsymbol{n})^2)\gamma(\boldsymbol{n})+\frac{4\gamma(\boldsymbol{n})^2-\gamma(\tilde{U}_1\boldsymbol{n})^2}{4}>0.
\end{equation}

On the contrary, $(U_2\boldsymbol{\tau}_1 \cdot \boldsymbol{n})^2+(U_2\boldsymbol{\tau}_2 \cdot \boldsymbol{n})^2=0$ implies both $U_2\boldsymbol{\tau}_1 \cdot \boldsymbol{n}=0$ and $U_2\boldsymbol{\tau}_2 \cdot \boldsymbol{n}=0$, we know that $U_2\boldsymbol{n}=\pm \boldsymbol{n}$. In this case, $\alpha ((U\boldsymbol{\tau}_1 \cdot \boldsymbol{n})^2+(U\boldsymbol{\tau}_2 \cdot \boldsymbol{n})^2)\gamma(\boldsymbol{n})+\frac{4\gamma(\boldsymbol{n})^2-\gamma(U\boldsymbol{n})^2}{4}$ becomes
\begin{equation}\label{eq: lem C1 aux 2}
    \frac{4\gamma(\boldsymbol{n})^2-\gamma(U\boldsymbol{n})^2}{4}\geq \min\left\{\frac{3\gamma(\boldsymbol{n})^2}{4}, \frac{4\gamma(\boldsymbol{n})^2-\gamma(-\boldsymbol{n})^2}{4}\right\}>0.
\end{equation}
And we can simply choose $k_{3, U_2}=0$. By applying the open cover theorem and \eqref{eq: lem C1 aux 1}, \eqref{eq: lem C1 aux 2}, and the above analysis, we deduce the desired result. 
\end{proof}

\begin{lemma}\label{lem: F4 and M4}
For any $\gamma(\boldsymbol{p})\in C^2(\mathbb{R}^3_*)$ with $\gamma(-\boldsymbol{n})<2\gamma(\boldsymbol{n})$, and $\forall U\in SO(3)$, $U\neq I_3=U(0, 0, 0)$, $U\neq {\rm diag(-1,-1,1)}=U(0, 0, \pi)$, there exists a constant $k_3\leq k_{4, U}<\infty$ with the open neighbourhood $\mathcal{U}_{U}$ of $U$, such that 
\begin{equation}\label{eq: F4 and M4 global}
    \det(M_4(\tilde{U}, k_{4, U}))\geq 0, \quad \forall \tilde{U}\in \mathcal{U}_{U}.
\end{equation}
\end{lemma}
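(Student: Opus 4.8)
The plan is to verify condition (iii) of Lemma~\ref{lem: compactness} --- namely \eqref{eq: local spd} --- for the matrix $A=M_4$ (so $m=4$), at every $U\in SO(3)$ other than the two rotations $I_3=U(0,0,0)$ and $\mathrm{diag}(-1,-1,1)=U(0,0,\pi)$. I would start by invoking Lemma~\ref{lem: F1 and M1}: there is a finite $k_3\ge 0$ such that $M_3(\tilde U,\alpha)$ is positive-definite for all $\tilde U\in SO(3)$ and all $\alpha\ge k_3$, and I will arrange $k_{4,U}\ge k_3$ throughout. Since for each fixed $\alpha$ (and fixed $\boldsymbol{n},\boldsymbol{\tau}_1,\boldsymbol{\tau}_2,\boldsymbol{\xi}$) the map $\tilde U\mapsto\det\bigl(M_4(\tilde U,\alpha)\bigr)$ is continuous on $SO(3)$ --- being a polynomial in the entries of $\tilde U$ together with the continuous factor $\gamma(\tilde U\boldsymbol{n})$ --- it is enough to produce, for each admissible $U$, some $k_{4,U}\ge k_3$ with the \emph{strict} inequality $\det\bigl(M_4(U,k_{4,U})\bigr)>0$; the open neighbourhood $\mathcal{U}_U$ on which \eqref{eq: F4 and M4 global} then holds comes for free.

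Writing $a:=U\boldsymbol{\tau}_1\cdot\boldsymbol{n}$ and $b:=U\boldsymbol{\tau}_2\cdot\boldsymbol{n}$, I would split into two cases, mirroring the proof of Lemma~\ref{lem: F4 and M4 2d}. If $a^2+b^2>0$, I would expand $\det\bigl(M_4(U,\alpha)\bigr)$ along the last row; the only $\alpha$-dependent entries of $M_4(U,\alpha)$ are the $(1,1),(2,2),(3,3)$ and $(1,3)=(3,1)$ entries of its leading $3\times3$ block (see \eqref{eq: def of M}), so this is a polynomial in $\alpha$, and I expect a direct computation to show that it has degree $\ge 1$ with strictly positive leading coefficient --- built, up to positive factors, from $a^2+b^2$ and $\gamma(\boldsymbol{n})$, in parallel with the leading term $\gamma(\boldsymbol{n})\,b^2(a^2+b^2)\alpha^2$ of $\det\bigl(M_3(U,\alpha)\bigr)$ recorded in Lemma~\ref{lem: F1 and M1}. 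Then $\det\bigl(M_4(U,\alpha)\bigr)\to+\infty$ as $\alpha\to\infty$, so any sufficiently large $k_{4,U}\ge k_3$ works.

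If instead $a^2+b^2=0$, then $U\boldsymbol{\tau}_1,U\boldsymbol{\tau}_2\perp\boldsymbol{n}$, hence $U\boldsymbol{n}=\pm\boldsymbol{n}$, and all $\alpha$-dependent entries of $M_4(U,\alpha)$ vanish; thus $\det\bigl(M_4(U,\alpha)\bigr)$ equals a constant $c_U$, and it suffices to check $c_U>0$, after which one may take $k_{4,U}=k_3$. When $U\boldsymbol{n}=\boldsymbol{n}$, $U$ is a rotation of $\boldsymbol{n}^{\perp}$ through some angle $\psi$; substituting the corresponding inner products $U\boldsymbol{\tau}_i\cdot\boldsymbol{\tau}_j$ and $\gamma(U\boldsymbol{n})=\gamma(\boldsymbol{n})$ into \eqref{eq: def of M}--\eqref{eq: def of Ms} and evaluating the $4\times4$ determinant should give
\[
  c_U=\tfrac{9}{16}\,\gamma(\boldsymbol{n})^4\sin^2\psi ,
\]
which is strictly positive precisely when $\psi\notin\{0,\pi\}$, i.e.\ when $U\neq I_3$ and $U\neq\mathrm{diag}(-1,-1,1)$. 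When $U\boldsymbol{n}=-\boldsymbol{n}$, the restriction of $U$ to $\boldsymbol{n}^{\perp}$ has determinant $-1$, hence is a reflection, across the line making some angle $\tfrac{\phi}{2}$ with $\boldsymbol{\tau}_1$; the same evaluation should give
\[
  c_U=\tfrac{3}{16}\gamma(\boldsymbol{n})^2\bigl(4\gamma(\boldsymbol{n})^2-\gamma(-\boldsymbol{n})^2\bigr)-\tfrac{1}{16}\gamma(\boldsymbol{n})^2\bigl(2\gamma(\boldsymbol{n})-\gamma(-\boldsymbol{n})\bigr)^2\cos^2\phi ,
\]
whose coefficient of $\cos^2\phi$ is $\le 0$, so that $c_U\ge\tfrac14\gamma(\boldsymbol{n})^2\bigl(2\gamma(\boldsymbol{n})-\gamma(-\boldsymbol{n})\bigr)\bigl(\gamma(\boldsymbol{n})+\gamma(-\boldsymbol{n})\bigr)>0$, using $\gamma>0$ and the hypothesis $\gamma(-\boldsymbol{n})<2\gamma(\boldsymbol{n})$. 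This would exhaust all of $SO(3)\setminus\{I_3,\mathrm{diag}(-1,-1,1)\}$.

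The hard part will be the generic case: showing that the leading $\alpha$-coefficient of the $4\times4$ determinant is strictly positive, since the contributions of the last-row off-diagonal entries $M_{41},M_{42},M_{43}$ (see \eqref{eq: def of Ms}) could, a priori, cancel the manifestly positive term $\gamma(\boldsymbol{n})\det\bigl(M_3(U,\alpha)\bigr)$; I would control this by carrying out the cofactor expansion carefully and exploiting the explicit factorisation of $\det\bigl(M_3\bigr)$ from Lemma~\ref{lem: F1 and M1}. I would also flag that $I_3$ and $\mathrm{diag}(-1,-1,1)$ are exactly the rotations at which the constant $c_U$ above vanishes, so they genuinely fall outside the scope of this lemma and must instead be treated by a second-order (Hessian) analysis --- via Jacobi's formula (Lemma~\ref{lem: jacobi}) and the group representation of $SO(3)$ (Lemma~\ref{lem: rep}) --- which is the content of Lemma~\ref{lem: F4 near 0}.
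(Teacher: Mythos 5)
Your case split is sensible and your computations in the degenerate case $a^2+b^2=0$ (with $U\boldsymbol{n}=\pm\boldsymbol{n}$) appear to agree with the paper's, including the crucial use of $\gamma(-\boldsymbol{n})<2\gamma(\boldsymbol{n})$ for the $U\boldsymbol{n}=-\boldsymbol{n}$ subcase and the observation that $\psi\in\{0,\pi\}$ are precisely the excluded rotations. But the generic case $a^2+b^2>0$ --- which you flag as ``the hard part'' and leave unresolved --- is exactly where the real content of this lemma lives, and as stated the plan has a genuine gap there.

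Two concrete issues. First, you cannot treat $a^2+b^2>0$ as a single case in which ``the leading $\alpha$-coefficient is positive.'' Because $M_4$ is \emph{not} symmetric in $\boldsymbol{\tau}_1,\boldsymbol{\tau}_2$ (the $(2,2)$ and $(3,3)$ entries of \eqref{eq: def of M} carry $(U\boldsymbol{\tau}_2\cdot\boldsymbol{n})^2$, not $(U\boldsymbol{\tau}_1\cdot\boldsymbol{n})^2$), the $\alpha^2$ coefficient of $\det(M_4(U,\alpha))$ carries an overall factor of $b^2=(U\boldsymbol{\tau}_2\cdot\boldsymbol{n})^2$. So when $b=0$ but $a\neq 0$ the $\alpha^2$ term vanishes \emph{identically} and one must show instead that the $\alpha^1$ coefficient is positive; the paper does this by computing it as $\gamma(\boldsymbol{n})a^2\bigl(\gamma(\boldsymbol{n})^2-M_{42}^2-M_{43}^2\bigr)$ and then noting that, since $b=0$ kills the $\boldsymbol{\xi}$ contributions in $M_{42},M_{43}$, one has $M_{42}^2+M_{43}^2=\frac{\gamma(\boldsymbol{n})^2}{4}\bigl((U\boldsymbol{\tau}_2\cdot\boldsymbol{\tau}_2)^2+(U\boldsymbol{\tau}_2\cdot\boldsymbol{\tau}_1)^2\bigr)\le\frac{\gamma(\boldsymbol{n})^2}{4}$. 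Second, and more centrally, in the $b\neq 0$ subcase the worry you raise (that the off-diagonal last-row entries might cancel $\gamma(\boldsymbol{n})\det(M_3)$) is resolved not by a generic cofactor expansion but by a specific algebraic identity: the $\alpha^2$ coefficient is $b^2\gamma(\boldsymbol{n})^2(a^2+b^2)-b^2\bigl(aM_{43}-bM_{41}\bigr)^2$, and the combination $aM_{43}-bM_{41}$ is special because the $\boldsymbol{\tau}_1\cdot\boldsymbol{\xi}$ terms from \eqref{eq: def of M31} and \eqref{eq: def of M43} cancel, leaving $aM_{43}-bM_{41}=-\tfrac{\gamma(\boldsymbol{n})}{2}\bigl[a(U\boldsymbol{\tau}_2\cdot\boldsymbol{\tau}_1)-b(U\boldsymbol{\tau}_1\cdot\boldsymbol{\tau}_1)\bigr]$, which is then controlled by Cauchy--Schwarz via $(U\boldsymbol{\tau}_2\cdot\boldsymbol{\tau}_1)^2+(U\boldsymbol{\tau}_1\cdot\boldsymbol{\tau}_1)^2\le 2$. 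Without this cancellation of the $\boldsymbol{\xi}$ terms --- which depends on the precise form of $M_{41},M_{43}$ --- there is no reason the cross term could not overwhelm the positive term, so ``carrying out the cofactor expansion carefully'' is not yet an argument; you would need to exhibit this identity to close the gap.
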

\begin{proof}
First from Lemma \ref{lem: F1 and M1}, we know that there exists a constant $k_3\geq 0$, such that $M_3(U, \alpha)$ is positive-definite $\alpha\geq k_3$. 
Suppose $(U_0\boldsymbol{\tau}_2 \cdot \boldsymbol{n})^2\neq 0$, we have 
\begin{align}
    &\det(M_4(U_0, \alpha))\nonumber\\
    &\ =(U_0\boldsymbol{\tau}_2\cdot \boldsymbol{n})^2\gamma(\boldsymbol{n})^2\left((U_0\boldsymbol{\tau}_1\cdot \boldsymbol{n})^2+(U_0\boldsymbol{\tau}_2\cdot \boldsymbol{n})^2\right)\alpha^2\nonumber\\
    &\quad -(U_0\boldsymbol{\tau}_2\cdot \boldsymbol{n})^2\left((U_0\boldsymbol{\tau}_1\cdot \boldsymbol{n})M_{43}-(U_0\boldsymbol{\tau}_2\cdot \boldsymbol{n})M_{41}\right)^2\alpha^2+\mathcal{O}(\alpha)\nonumber\\
    &\ =(U_0\boldsymbol{\tau}_2\cdot \boldsymbol{n})^2\gamma(\boldsymbol{n})^2\left((U_0\boldsymbol{\tau}_1\cdot \boldsymbol{n})^2+(U_0\boldsymbol{\tau}_2\cdot \boldsymbol{n})^2\right)\alpha^2\nonumber\\
    &\quad -\frac{(U_0\boldsymbol{\tau}_2\cdot \boldsymbol{n})^2\gamma(\boldsymbol{n})^2}{4}[(U_0\boldsymbol{\tau}_1\cdot \boldsymbol{n})(U_0\boldsymbol{\tau}_2\cdot \boldsymbol{\tau}_1)-(U_0\boldsymbol{\tau}_2\cdot \boldsymbol{n})(U_0\boldsymbol{\tau}_1\cdot \boldsymbol{\tau}_1)]^2\alpha^2
    +\mathcal{O}(\alpha)\nonumber\\
    &\ \geq \frac{(U_0\boldsymbol{\tau}_2\cdot \boldsymbol{n})^2\gamma(\boldsymbol{n})^2}{2}\left((U_0\boldsymbol{\tau}_1\cdot \boldsymbol{n})^2+(U_0\boldsymbol{\tau}_2\cdot \boldsymbol{n})^2\right)\alpha^2+\mathcal{O}(\alpha).\nonumber
\end{align}
Thus for such $U_0$, there exists a constant 
$k_3\leq k_{4, U_0}<\infty$ and an open neighbourhood $\mathcal{U}_{U_0}$ of $U_0$,
 such that $F_4(U, k_{4, U_0})\geq 0, \forall U\in \mathcal{U}_{U_0}$.

Next, suppose $(U_1\boldsymbol{\tau}_1 \cdot \boldsymbol{n})^2\neq 0, (U_1\boldsymbol{\tau}_2 \cdot \boldsymbol{n})^2=0$, we have
\begin{align}
    \det(M_4(U_1, \alpha))&=\gamma(\boldsymbol{n})(U_1\boldsymbol{\tau}_1 \cdot \boldsymbol{n})^2\left(\gamma(\boldsymbol{n})^2-M_{42}^2-M_{43}^2\right)\alpha+\mathcal{O}(1)\nonumber\\
    &\geq \frac{1}{2}\gamma(\boldsymbol{n})^3(U_1\boldsymbol{\tau}_1 \cdot \boldsymbol{n})^2\alpha+\mathcal{O}(1).\nonumber
\end{align}
By the same argument, we know that there exists a constant $k_3\leq k_{4,U_1}<\infty$ and an open neighbourhood $\mathcal{U}_{U_1}$ of $U_1$, such that $\det(M_4(U, k_{4, U_1}))\geq 0, \forall U\in \mathcal{U}_{U_1}$.

If both $(U_2\boldsymbol{\tau}_1 \cdot \boldsymbol{n})^2= 0$ and $(U_2\boldsymbol{\tau}_2 \cdot \boldsymbol{n})^2= 0$, we know that $U_2\boldsymbol{n}=\pm \boldsymbol{n}$. First we assume that $U_2(\boldsymbol{\Phi})\boldsymbol{n}=\boldsymbol{n}$, i.e. $\phi=\theta=0$. In this case, from Lemma \ref{lem: rep} and \eqref{eq: rep of O} we obtain
\begin{equation}\label{eq: rep of O plus}
U_2\boldsymbol{\tau}_1\cdot \boldsymbol{\tau}_1=\cos\psi,\, U_2\boldsymbol{\tau}_1\cdot \boldsymbol{\tau}_2=\sin\psi,\, U_2\boldsymbol{\tau}_2\cdot \boldsymbol{\tau}_2=\cos\psi,\, U_2\boldsymbol{\tau}_2\cdot \boldsymbol{\tau}_1=-\sin\psi.
\end{equation}
For any $\alpha\geq k_3$, by applying \eqref{eq: rep of O plus} we have 
\begin{equation}
    \det(M_4(U_2, \alpha))=\frac{9\sin^2\psi}{16}\gamma(\boldsymbol{n})^4.
\end{equation}
The condition $U\neq I_3=U(0, 0, 0), U\neq U(0, 0, \pi)={\rm diag(-1,-1,1)}$, implies $\psi\neq 0,\pi$, thus we know that $\det(M_4(U_2, k_3))>0$. By the same argument, there exists such open neighbourhood $\mathcal{U}_{U_2}$ of $U_2$ and the $k_3= k_{4, U_2}<\infty$. 

The last case is $U_3(\boldsymbol{\Phi})\boldsymbol{n}=-\boldsymbol{n}$, we assume that $\phi=\pi, \theta=0$. For any $\alpha>0$, from the fact $\gamma(-\boldsymbol{n})<2\gamma(\boldsymbol{n})$ and Lemma \ref{lem: rep}, we have 
\begin{align}
    \det(M_4(U_3, \alpha))=&\gamma(\boldsymbol{n})^2\frac{2\gamma(\boldsymbol{n})-\gamma(-\boldsymbol{n})}{32}
    \bigl(\gamma(\boldsymbol{n})(10-2\cos(2\psi))\bigr.\nonumber\\
    &\ \bigl.+\gamma(-\boldsymbol{n})(7-2\cos(2\psi))\bigr)>0.
\end{align}
By the same argument, there is an open neighbourhood $\mathcal{U}_{U_3}$ of $U_3$ and a constant 
$k_3= k_{4, U_3}<\infty$, such that $\forall U\in \mathcal{U}_{U_3}$, it holds $\det(M_4(U, k_{4, U_3}))\geq 0$. 
\end{proof}

To discuss $U$ near $I_3$ or $U$ near $U(0,0,\pi)={\rm diag(-1,-1,1)}$, it suffices to consider the $U=U(\boldsymbol{\Phi})$ when $\boldsymbol{\Phi}$ near $\boldsymbol{0}$ or $(0,0,\pi)^T$. 

\begin{lemma}\label{lem: F4 near 0}
For any $\gamma(\boldsymbol{p})\in C^2(\mathbb{R}^3_*)$ with $\gamma(-\boldsymbol{n})<2\gamma(\boldsymbol{n})$, there exists $k_3\leq k_{4,I_3}<\infty, k_3\leq k_{4,U(0,0,\pi)}<\infty$ with the open neighbourhood $\mathcal{U}$ of $I_3$, $\mathcal{V}$ of $U(0,0,\pi)$ such that 
\begin{equation}\label{eq: F4 near 0}
    F_4(U, k_{4,I_3})\geq 0,\quad \forall U(\boldsymbol{\Phi})\in \mathcal{U};
\end{equation}
\begin{equation}\label{eq: F4 near 0 2}
    F_4(U, k_{4,U(0,0,\pi)})\geq 0,\quad \forall U(\boldsymbol{\Phi})\in \mathcal{V}.
\end{equation}
\end{lemma}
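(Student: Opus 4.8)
The plan is to imitate, in the three angular variables furnished by the group representation of Lemma~\ref{lem: rep}, the same second-order Taylor argument that proves the two-dimensional Lemma~\ref{lem: F3 near 0 2d}: near each of the two exceptional rotations $\det(M_4(\cdot,\alpha))$ vanishes, has vanishing first variation, and --- once $\alpha$ is large --- has a positive-definite second variation, hence is locally nonnegative. Before computing anything I would halve the work by a reflection symmetry: with $D=\mathrm{diag}(1,1,1,-1)$ and $U(0,0,\pi)=\mathrm{diag}(-1,-1,1)$ in the frame $\{\boldsymbol{\tau}_1,\boldsymbol{\tau}_2,\boldsymbol{n}\}$, one checks directly from \eqref{eq: def of M}--\eqref{eq: def of Ms} that $M(U,\alpha)=D\,M\bigl(U\,\mathrm{diag}(-1,-1,1),\alpha\bigr)\,D$: sending $U\mapsto U\,\mathrm{diag}(-1,-1,1)$ negates $U\boldsymbol{\tau}_1,U\boldsymbol{\tau}_2$ but fixes $U\boldsymbol{n}$, leaving the diagonal and $M_{12},M_{13},M_{23}$ unchanged and merely flipping the signs of $M_{41},M_{42},M_{43}$. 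Thus $\det M_4$ and the minors $M_1,M_2,M_3$ are invariant under this map, so \eqref{eq: F4 near 0 2} will follow from \eqref{eq: F4 near 0} with $k_{4,U(0,0,\pi)}:=k_{4,I_3}$ and $\mathcal{V}:=\mathcal{U}\cdot\mathrm{diag}(-1,-1,1)$; it therefore suffices to prove \eqref{eq: F4 near 0}.

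For \eqref{eq: F4 near 0} I would write $U=U(\boldsymbol{\Phi})$ with $\boldsymbol{\Phi}$ near $\boldsymbol{0}$. Because $\partial_\phi U,\partial_\theta U,\partial_\psi U$ at $\boldsymbol{\Phi}=\boldsymbol{0}$ are, by \eqref{eq: dO at 0}--\eqref{eq: dO at 01}, the three standard generators of $\mathfrak{so}(3)$ and hence linearly independent, $\boldsymbol{\Phi}\mapsto U(\boldsymbol{\Phi})$ is a local diffeomorphism at $\boldsymbol{0}$, so it is enough to prove $g(\boldsymbol{\Phi}):=\det\bigl(M_4(U(\boldsymbol{\Phi}),\alpha)\bigr)\ge 0$ near $\boldsymbol{\Phi}=\boldsymbol{0}$ for one suitable fixed $\alpha$. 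At $\boldsymbol{\Phi}=\boldsymbol{0}$, $U=I_3$, so $U\boldsymbol{\tau}_i\cdot\boldsymbol{n}=0$, $U\boldsymbol{\tau}_i\cdot\boldsymbol{\tau}_j=\delta_{ij}$, $\gamma(U\boldsymbol{n})=\gamma(\boldsymbol{n})$, and $M_{41}|_{\boldsymbol{0}}=M_{42}|_{\boldsymbol{0}}=-\frac12\gamma(\boldsymbol{n})$, $M_{43}|_{\boldsymbol{0}}=0$; one finds $M_4|_{\boldsymbol{0}}$ is a symmetric rank-$3$ matrix with zero determinant whose leading $3\times3$ minor $M_3|_{\boldsymbol{0}}$ is positive definite with $\det(M_3|_{\boldsymbol{0}})=\frac34\gamma(\boldsymbol{n})^3>0$. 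Hence $g(\boldsymbol{0})=0$ and $\mathrm{adj}(M_4)|_{\boldsymbol{0}}=\lambda\,\boldsymbol{v}\boldsymbol{v}^{T}$ with $\lambda=\det(M_3|_{\boldsymbol{0}})>0$ and $\boldsymbol{v}=(1,1,0,1)^{T}$ spanning $\ker M_4|_{\boldsymbol{0}}$. Jacobi's formula \eqref{eq: jacobi 1} gives $\partial_\beta g|_{\boldsymbol{0}}=\lambda\,\boldsymbol{v}^{T}\bigl(\partial_\beta M_4|_{\boldsymbol{0}}\bigr)\boldsymbol{v}$; using \eqref{eq: dO at 0}--\eqref{eq: dO at 01}, the chain rule $\partial_\beta\gamma(U\boldsymbol{n})|_{\boldsymbol{0}}=\boldsymbol{\xi}\cdot\partial_\beta(U\boldsymbol{n})|_{\boldsymbol{0}}$, and that $\boldsymbol{\xi}=\boldsymbol{\xi}(\boldsymbol{n})$ is $\boldsymbol{\Phi}$-independent, each component vanishes, the $\gamma(U\boldsymbol{n})$-term cancelling the $\boldsymbol{\xi}$-term exactly as in \eqref{eq: F3 ori and first 2d}, so $\nabla_{\boldsymbol{\Phi}}g|_{\boldsymbol{0}}=\boldsymbol{0}$.

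The crux is the Hessian: by the second-order Jacobi formula \eqref{eq: jacobi 2}, $\partial^2_{\beta\varphi}g|_{\boldsymbol{0}}=\lambda\,\boldsymbol{v}^{T}\bigl(\partial^2_{\beta\varphi}M_4|_{\boldsymbol{0}}\bigr)\boldsymbol{v}$ plus a sum of determinants with two rows of $M_4$ differentiated. Here $\alpha$ enters $M_4$ only through $M_{11},M_{22},M_{33}$ (via $(U\boldsymbol{\tau}_i\cdot\boldsymbol{n})^2$) and through $M_{13}=\alpha(U\boldsymbol{\tau}_1\cdot\boldsymbol{n})(U\boldsymbol{\tau}_2\cdot\boldsymbol{n})$, whose first $\boldsymbol{\Phi}$-derivatives all vanish at $\boldsymbol{0}$ (each has a factor $U\boldsymbol{\tau}_i\cdot\boldsymbol{n}|_{\boldsymbol{0}}=0$), so the determinant-sum term is $\alpha$-free; and by \eqref{eq: Oother at 0} the second derivatives add $2\alpha$ to the $\phi\phi$-entry through $M_{11}$ and $2\alpha$ to the $\theta\theta$-entry through $M_{22}$, contributing nothing (after contracting with $\boldsymbol{v}$, whose third entry is $0$) through $M_{33}$, $M_{13}$ or $M_{44}$. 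Thus $\mathrm{Hess}_{\boldsymbol{\Phi}}g|_{\boldsymbol{0}}=2\lambda\alpha\,\mathrm{diag}(1,1,0)+C$ with $C=C(\boldsymbol{n})$ fixed and $\alpha$-independent in the ordering $(\phi,\theta,\psi)$. It then suffices that $C_{\psi\psi}>0$, which I would read off the slice $\phi=\theta=0$: by the $U_2$-computation already carried out in the proof of Lemma~\ref{lem: F4 and M4}, $g(0,0,\psi)=\frac{9}{16}\sin^2\psi\,\gamma(\boldsymbol{n})^4$, so $C_{\psi\psi}=\partial^2_{\psi\psi}g|_{\boldsymbol{0}}=\frac98\gamma(\boldsymbol{n})^4>0$. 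Hence for $\alpha$ large enough the Hessian is positive definite; fixing such an $\alpha\ge k_3$ as $k_{4,I_3}$ and invoking continuity of the Hessian, $g(\boldsymbol{0})=0$ and $\nabla g|_{\boldsymbol{0}}=\boldsymbol{0}$, Taylor's theorem gives an open neighbourhood $\mathcal{U}$ of $I_3$ with $g\ge 0$ on $\mathcal{U}$, which is \eqref{eq: F4 near 0}; \eqref{eq: F4 near 0 2} then follows from the reflection above.

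The step I expect to be the main obstacle is this second-variation computation: expanding a $4\times4$ determinant in three angular variables via \eqref{eq: jacobi 2} produces a large number of terms, and everything rests on cleanly establishing the structure $\mathrm{Hess}\,g|_{\boldsymbol{0}}=2\lambda\alpha\,\mathrm{diag}(1,1,0)+C$ --- in particular that $\alpha$ enters the $\phi\phi$- and $\theta\theta$-entries with a positive coefficient and contaminates neither the $\psi\psi$-entry nor any off-diagonal entry, for otherwise large $\alpha$ would not restore positive-definiteness. Pinning this down requires careful use of \eqref{eq: Oother at 0} together with the fact that the $\alpha$-free entries $M_{12}=-\frac12\gamma(U\boldsymbol{n})$ and $M_{41},M_{42},M_{43}$ are exactly where $\gamma$, $\boldsymbol{\xi}$ and $\mathbf{H}_\gamma$ appear; once the $\alpha$-dependence is isolated, the remaining bookkeeping parallels the 2D proof. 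The condition $\gamma(-\boldsymbol{n})<2\gamma(\boldsymbol{n})$ is used here only indirectly, to guarantee via Lemma~\ref{lem: F1 and M1} that $k_3$ (and hence $k_{4,I_3}\ge k_3$) keeps $M_3(\cdot,\alpha)$ positive definite over all of $SO(3)$.
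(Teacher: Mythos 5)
Your proof is correct and mirrors the paper's own argument in its core: Jacobi's formula gives $\det M_4|_{\boldsymbol{\Phi}=\boldsymbol{0}}=0$ and $\nabla_{\boldsymbol{\Phi}}\det M_4|_{\boldsymbol{\Phi}=\boldsymbol{0}}=\boldsymbol{0}$, the Hessian decomposes as $\tfrac{3}{2}\gamma(\boldsymbol{n})^3\alpha\,\text{diag}(1,1,0)+C$ (your $2\lambda\alpha$ with $\lambda=\det M_3|_{\boldsymbol{0}}=\tfrac34\gamma(\boldsymbol{n})^3$ agrees exactly with the paper's coefficient in \eqref{eq: F4 Hessian}), $C_{\psi\psi}=\tfrac98\gamma(\boldsymbol{n})^4>0$, and large $\alpha$ makes the Hessian positive definite so Taylor's theorem gives local nonnegativity. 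The two places you diverge are refinements rather than a different route, and both check out. The reflection symmetry $M(U\cdot U_0,\alpha)=D\,M(U,\alpha)\,D$ with $D=\text{diag}(1,1,1,-1)$ and $U_0$ the rotation realizing $\text{diag}(-1,-1,1)$ in the $\{\boldsymbol{\tau}_1,\boldsymbol{\tau}_2,\boldsymbol{n}\}$-frame holds entry by entry ($M_{11},M_{22},M_{33},M_{44},M_{12},M_{13},M_{23}$ invariant; $M_{41},M_{42},M_{43}$ negated), and since $\det(DAD)=\det A$ and the leading $3\times 3$ block is untouched, $\det M_4$ and $M_1,M_2,M_3$ are preserved; so \eqref{eq: F4 near 0 2} really does follow from \eqref{eq: F4 near 0} with $\mathcal{V}=\mathcal{U}\cdot U_0$ and $k_{4,U(0,0,\pi)}=k_{4,I_3}$, which is tidier than the paper's ``the proof of \eqref{eq: F4 near 0 2} is similar.'' Reading $C_{\psi\psi}$ off the $\alpha$-free slice $\det M_4|_{\phi=\theta=0}=\tfrac{9}{16}\sin^2\psi\,\gamma(\boldsymbol{n})^4$, already computed in Lemma \ref{lem: F4 and M4} (and $\alpha$-free there since $U\boldsymbol{\tau}_1\cdot\boldsymbol{n}=U\boldsymbol{\tau}_2\cdot\boldsymbol{n}=0$ on that slice), is a clean shortcut around the paper's direct assembly via \eqref{eq: M4 basic adj}--\eqref{eq: M4 basic dpsipsi} and gives the same $\tfrac98\gamma(\boldsymbol{n})^4$. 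One small caveat on wording: the first variation does not vanish ``exactly as in \eqref{eq: F3 ori and first 2d}'' --- in 2D the matrix $\partial_\theta\tilde M_2|_{\theta=0}$ is identically zero (see \eqref{eq: A basic dtheta 2d}), whereas in 3D $\partial_\beta M_4|_{\boldsymbol{\Phi}=\boldsymbol{0}}$ is nonzero (see \eqref{eq: M4 basic dphi}--\eqref{eq: M4 basic dtheta}) and the gradient vanishes only after the contraction $\boldsymbol{v}^T(\partial_\beta M_4|_{\boldsymbol{0}})\boldsymbol{v}$ with $\boldsymbol{v}=(1,1,0,1)^T$; the conclusion is the same, but the cancellation happens at the quadratic-form level, not entrywise.
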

\begin{proof}
First by applying the chain rule, noticing $\nabla \gamma(\boldsymbol{p})|_{\boldsymbol{p}=\boldsymbol{n}}=\boldsymbol{\xi}(\boldsymbol{n}), \\\nabla \nabla \gamma(\boldsymbol{p})|_{\boldsymbol{p}=\boldsymbol{n}}={\bf H}_{\gamma}(\boldsymbol{n})$, together with \eqref{eq: dO at 0}, we obtain that
\begin{subequations}\label{eq: gamma U basic}
\begin{align}
    \label{eq: gamma U ori}
    \gamma(U\boldsymbol{n})\Big|_{\boldsymbol{\Phi}=\boldsymbol{0}}&=\gamma(\boldsymbol{n}), \\
    \label{eq: gamma U dphi}
    \frac{\partial \gamma(U\boldsymbol{n})}{\partial \phi}\Big|_{\boldsymbol{\Phi}=\boldsymbol{0}}&=\nabla \gamma(U \boldsymbol{n})\Big|_{\boldsymbol{\Phi}=\boldsymbol{0}}\cdot \left(\frac{\partial U}{\partial \phi}\Big|_{\boldsymbol{\Phi}=\boldsymbol{0}} \boldsymbol{n}\right)=-\boldsymbol{\xi}\cdot \boldsymbol{\tau}_2,\\
    \label{eq: gamma U dtheta}
    \frac{\partial \gamma(U\boldsymbol{n})}{\partial \theta}\Big|_{\boldsymbol{\Phi}=\boldsymbol{0}}&=\boldsymbol{\xi}\cdot \boldsymbol{\tau}_1,
    \qquad \frac{\partial \gamma(U\boldsymbol{n})}{\partial \psi}\Big|_{\boldsymbol{\Phi}=\boldsymbol{0}}=\boldsymbol{0},\\
    \label{eq: gamma U dpsipsi}
    \frac{\partial^2 \gamma(U\boldsymbol{n})}{\partial \psi^2}\Big|_{\boldsymbol{\Phi}=\boldsymbol{0}}&=\left(\frac{\partial U}{\partial \psi}\Big|_{\boldsymbol{\Phi}=\boldsymbol{0}} \boldsymbol{n}\right)\cdot {\bf H}_{\gamma}(\boldsymbol{n})\cdot \left(\frac{\partial U}{\partial \psi}\Big|_{\boldsymbol{\Phi}=\boldsymbol{0}} \boldsymbol{n}\right)\nonumber\\
    &\quad +\boldsymbol{\xi}\cdot \left(\frac{\partial^2 U}{\partial \psi^2}\Big|_{\boldsymbol{\Phi}=\boldsymbol{0}} \boldsymbol{n}\right)\nonumber\\
    &=\boldsymbol{0}\cdot ({\bf H}_{\gamma}(\boldsymbol{n}) \boldsymbol{0})+\boldsymbol{\xi}\cdot \boldsymbol{0}=\boldsymbol{0}.
\end{align}
\end{subequations}
By definition of $M_4(U, \alpha)$, \eqref{eq: dO at 0}, \eqref{eq: gamma U basic}, and the definition of adjunct matrix,  we know that
\begin{subequations}\label{eq: M4 basic}
\begin{align}
    \label{eq: M4 basic ori}
    &M_4(U, \alpha)\Big|_{\boldsymbol{\Phi}=\boldsymbol{0}}=\gamma(\boldsymbol{n})\begin{bmatrix}
    1&-1/2&0&-1/2\\-1/2&1&0&-1/2\\0&0&1&0\\-1/2&-1/2&0&1
    \end{bmatrix}, \\
    \label{eq: M4 basic adj}
    &\text{adj}(M_4(U, \alpha))\Big|_{\boldsymbol{\Phi}=\boldsymbol{0}}=\frac{3}{4}\gamma(\boldsymbol{n})^3\begin{bmatrix}
    1\\1\\0\\1
    \end{bmatrix}\begin{bmatrix}
    1&1&0&1
    \end{bmatrix},\\
    \label{eq: M4 basic dphi}
    &\frac{\partial M_4(U, \alpha)}{\partial \phi}\Big|_{\boldsymbol{\Phi}=\boldsymbol{0}}=\frac{1}{2}\begin{bmatrix}
    0&\boldsymbol{\tau}_2\cdot \boldsymbol{\xi}&0&0\\ \boldsymbol{\tau}_2\cdot \boldsymbol{\xi}&0&0&-\boldsymbol{\tau}_2\cdot \boldsymbol{\xi}\\0&0&0&-\boldsymbol{\tau}_1\cdot \boldsymbol{\xi}\\0&-\boldsymbol{\tau}_2\cdot \boldsymbol{\xi}&-\boldsymbol{\tau}_1\cdot \boldsymbol{\xi}&0
    \end{bmatrix},\\
    \label{eq: M4 basic dtheta}
    &\frac{\partial M_4(U, \alpha)}{\partial \theta}\Big|_{\boldsymbol{\Phi}=\boldsymbol{0}}=\frac{\boldsymbol{\tau}_1\cdot \boldsymbol{\xi}}{2}\begin{bmatrix}
    0&-1&0&1\\-1&0&0&0\\0&0&0&0\\1&0&0&0
    \end{bmatrix},
\end{align}
\end{subequations}    
\begin{subequations}\label{eq: M4 basicc}
\begin{align}   
    \label{eq: M4 basic dpsi}
    \frac{\partial M_4(U, \alpha)}{\partial \psi}\Big|_{\boldsymbol{\Phi}=\boldsymbol{0}}=\frac{\gamma(\boldsymbol{n})}{2}\begin{bmatrix}
    0&0&0&0\\0&0&0&0\\0&0&0&1\\0&0&1&0
    \end{bmatrix},\\ 
    \label{eq: M4 basic dpsipsi}
    \frac{\partial^2 M_4(U, \alpha)}{\partial \psi^2}\Big|_{\boldsymbol{\Phi}=\boldsymbol{0}}=\frac{\gamma(\boldsymbol{n})}{2}\begin{bmatrix}
    0&0&0&1\\0&0&0&1\\0&0&0&0\\1&1&0&0
    \end{bmatrix}.
\end{align}
\end{subequations}
Combining \eqref{eq: jacobi 1} in Lemma \ref{lem: jacobi} and \eqref{eq: M4 basic} and
\eqref{eq: M4 basicc}, we get
\begin{equation}\label{eq: F4 ori and first}
    \det(M_4(U, \alpha))\Big|_{\boldsymbol{\Phi}=\boldsymbol{0}}=0, \quad \frac{\partial \det(M_4(U, \alpha))}{\partial \beta}\Big|_{\boldsymbol{\Phi}=\boldsymbol{0}}=0, \,\,\forall \beta\in \{\phi, \theta, \psi\}.
\end{equation}
Obviously, $M_4(U, 0)$ is independent of $\alpha$. From \eqref{eq: M4 basic}, we observe that $M_4(U, \alpha)\Big|_{\boldsymbol{\Phi}=\boldsymbol{0}}$ and $\frac{\partial M_4(U, \alpha)}{\partial \beta}\Big|_{\boldsymbol{\Phi}=\boldsymbol{0}}$ for $\beta\in \{\phi, \theta, \psi\}$ are also independent of $\alpha$. Thus for any $\beta, \varphi\in \{\phi, \theta, \psi\}$, we define the constant $C^1_{4, \beta, \varphi}, C^2_{4, \beta, \varphi}$ as follows
\begin{subequations}\label{eq: def of C4s}
\begin{align}
\label{eq: def of C14}
    &C^1_{4, \beta, \varphi}:=\frac{3}{4}\gamma(\boldsymbol{n})^3\begin{bmatrix}
    1\\1\\0\\1
    \end{bmatrix}^T \frac{\partial^2 M_4(U, 0)}{\partial \beta \partial \varphi}\Big|_{\boldsymbol{\Phi}=\boldsymbol{0}} \begin{bmatrix}
    1\\1\\0\\1
    \end{bmatrix},\\
\label{eq: def of C24}
    &C^2_{4, \beta, \varphi}:=\sum_{i\neq j}\det \begin{bmatrix}
    M_{1,1}&M_{1,2}&M_{1,3}&M_{1,4}\\
    \frac{\partial M_{i,1}}{\partial \beta}&\frac{\partial M_{i,2}}{\partial \beta}&\frac{\partial M_{i,3}}{\partial \beta}&\frac{\partial M_{i,3}}{\partial \beta}\\
    \frac{\partial M_{j,1}}{\partial \varphi}&\frac{\partial M_{j,2}}{\partial \varphi}&\frac{\partial M_{j,3}}{\partial \varphi}&\frac{\partial M_{j,3}}{\partial \varphi}\\
    M_{4,1}&M_{4,2}&M_{4,3}&M_{4,4}
    \end{bmatrix}.
\end{align}
\end{subequations}

From the definition of $M(U, \alpha)$, we know that
\begin{align}\label{eq: decomp of M4}
    M_4(U, \alpha)&:=M_4(U, 0)+\alpha D(U).
\end{align}
with
\begin{align*}
D(U)=\begin{bmatrix}
    (U\boldsymbol{\tau}_1\cdot \boldsymbol{n})^2 &0&(U\boldsymbol{\tau}_1\cdot \boldsymbol{n})(U\boldsymbol{\tau}_2\cdot \boldsymbol{n})&0\\
    0&(U\boldsymbol{\tau}_2\cdot \boldsymbol{n})^2&0&0\\
    (U\boldsymbol{\tau}_1\cdot \boldsymbol{n})(U\boldsymbol{\tau}_2\cdot \boldsymbol{n})&0&(U\boldsymbol{\tau}_2\cdot \boldsymbol{n})^2&0\\
    0&0&0&0
    \end{bmatrix}.
\end{align*}

Using \eqref{eq: jacobi 2} in Lemma \ref{lem: jacobi}, together with \eqref{eq: M4 basic adj}, \eqref{eq: Oother at 0}, \eqref{eq: def of C4s}, \eqref{eq: decomp of M4}, we deduce for any $\beta, \varphi\in \{\phi, \theta, \psi\}$, the second order derivative of $F_4$ as follows
\begin{align*}
    &\frac{\partial^2 \det(M_4(U, \alpha))}{\partial \beta \partial \varphi}\Big|_{\boldsymbol{\Phi}=\boldsymbol{0}}\nonumber\\
    &=\text{tr}\left(\text{adj}(M_4(U, \alpha))\frac{\partial^2M_4(U, \alpha)}{\partial \beta \partial \varphi}\right)+\sum_{i\neq j}\det \begin{bmatrix}
    M_{1,1}&M_{1,2}&M_{1,3}&M_{1,4}\\
    \frac{\partial M_{i,1}}{\partial \beta}&\frac{\partial M_{i,2}}{\partial \beta}&\frac{\partial M_{i,3}}{\partial \beta}&\frac{\partial M_{i,3}}{\partial \beta}\\
    \frac{\partial M_{j,1}}{\partial \varphi}&\frac{\partial M_{j,2}}{\partial \varphi}&\frac{\partial M_{j,3}}{\partial \varphi}&\frac{\partial M_{j,3}}{\partial \varphi}\\
    M_{4,1}&M_{4,2}&M_{4,3}&M_{4,4}
    \end{bmatrix}\nonumber\\
    &=\frac{3}{4}\gamma(\boldsymbol{n})^3\begin{bmatrix}
    1\\1\\0\\1
    \end{bmatrix}\cdot \frac{\partial^2 (M_4(U, 0)+\alpha D(U))}{\partial \beta \partial \varphi}\Big|_{\boldsymbol{\Phi}=\boldsymbol{0}} \begin{bmatrix}
    1\\1\\0\\1
    \end{bmatrix}+C^2_{4, \beta, \varphi}\nonumber\\
    &=C^1_{4, \beta, \varphi}+C^2_{4, \beta, \varphi}+\frac{3\alpha}{4}\gamma(\boldsymbol{n})^3(2\delta_{\beta\phi}\delta_{\varphi\phi}+2\delta_{\beta\theta}\delta_{\varphi\theta}).
\end{align*}
We note only $\frac{\partial^2 \det(M_4(U, \alpha))}{\partial \phi^2}\Big|_{\boldsymbol{\Phi}=\boldsymbol{0}}, \frac{\partial^2 \det(M_4(U, \alpha))}{\partial \theta^2}\Big|_{\boldsymbol{\Phi}=\boldsymbol{0}}$ depend on $\alpha$. Hence the Hessian matrix ${\bf H}_{\det(M_4(U, \alpha))}\Big|_{\boldsymbol{\Phi}=\boldsymbol{0}}$ can be written as
\begin{equation}\label{eq: F4 Hessian}
    {\bf H}_{\det(M_4(U, \alpha))}\Big|_{\boldsymbol{\Phi}=\boldsymbol{0}}=(C^1_{4, \beta, \varphi}+C^2_{4, \beta, \varphi})_{\beta, \varphi\in \{\phi, \theta, \psi\}}+\frac{3\alpha}{2}\gamma(\boldsymbol{n})^3\text{diag}(1,1, 0).
\end{equation}

Moreover, by combining \eqref{eq: M4 basic adj}, \eqref{eq: M4 basic dpsi}, \eqref{eq: M4 basic dpsipsi}, \eqref{eq: def of C4s}, \eqref{eq: F4 Hessian} for \\ $\frac{\partial^2 \det(M_4(U, \alpha))}{\partial \psi^2}\Big|_{\boldsymbol{\Phi}=\boldsymbol{0}}$,  we have
\begin{equation}\label{eq: F4 dpsipsi}
    \frac{\partial^2 \det(M_4(U, \alpha))}{\partial \psi^2}\Big|_{\boldsymbol{\Phi}=\boldsymbol{0}}=C^1_{4, \psi, \psi}+C^2_{4, \psi, \psi}=\frac{9}{8}\gamma(\boldsymbol{n})^4>0.
\end{equation}
This together with \eqref{eq: F4 Hessian} imply that there exists a $k_{3}\leq k_{4,I_3}<\infty$, such that ${\bf H}_{\det(M_4(U, k_{4,I_3}))}\Big|_{\boldsymbol{\Phi}=\boldsymbol{0}}$ is positive-definite. By the continuity of ${\bf H}_{\det(M_4(U, \alpha))}$, we know that there is an open neighbourhood $\mathcal{U}$ of $I_3$, such that $\forall U(\boldsymbol{\Phi})\in \mathcal{U}$, it holds
\begin{equation}
    {\bf H}_{\det(M_4(U, k_{4,I_3}))}\Big|_{U=U(\boldsymbol{\Phi})} \text{ is positive semi-definite}.
\end{equation}
Thus by Taylor expansion, we know that $\det(M_4(U, k_{4,I_3}))\geq 0, \forall U(\boldsymbol{\Phi})\in \mathcal{U}$, which validates \eqref{eq: F4 near 0}. 

The proof of \eqref{eq: F4 near 0 2} is similar. 
\end{proof}

\medskip
Similar to the proof of Theorem \ref{thm: existence of k0} for $d=2$, Theorem \ref{thm: existence of k0} for $d=3$ is also a direct result of Lemma \ref{lem: compactness} together with Lemma \ref{lem: F1 and M1}, Lemma  \ref{lem: F4 and M4} and Lemma \ref{lem: F4 near 0}.  

\section{Numerical results}

\setcounter{equation}{0}

In this section, we present numerical results for the proposed unified SP-PFEM \eqref{eq: full PFEM} for time evolution of surfaces in 3D. We demonstrate the efficiency of the method using a convergence test and verify the main result \eqref{thm: main} with a conservation law test. And we also apply \eqref{eq: full PFEM} to show the morphological evolution of several non-symmetric anisotropic energies.

\begin{table}[htp!]
\label{tb: convergence rate}
\begin{center}
 \begin{tabular}{@{\extracolsep{\fill}}|c|cc|cc|cc|}\hline
$(h,\tau)$ & $e^h (1)$\, \footnotesize{Case 1\,\,} & \footnotesize{order} & $e^h (1)$\, \footnotesize{Case 2\,\,} & \footnotesize{order} & $e^h(1)$\, \footnotesize{Case 3\,} & \footnotesize{order} \\ \hline
$(h_0,\tau_0)$ & $1.48 \times 10^{-1}$ & - & $1.56 \times 10^{-1}$ & - & $1.63 \times 10^{-1}$ & - \\ \hline
$\left(\frac{h_0}{2}, \frac{\tau_0}{4}\right)$ & $3.68 \times 10^{-2}$ & 2.01 & $3.87 \times 10^{-2}$ & 2.01 & $3.98 \times 10^{-2}$ & 2.03 \\ \hline
$\left(\frac{h_0}{2^2}, \frac{\tau_0}{4^2}\right)$ & $8.95 \times 10^{-3}$ & 2.04 & $9.73 \times 10^{-3}$ & 1.99 & $9.53 \times 10^{-3}$ & 2.06 \\ \hline

 \end{tabular}
 \vspace{5mm}
\begin{tabular}{@{\extracolsep{\fill}}|c|cc|cc|cc|}\hline
$(h,\tau)$ & $e^h (1)$\, \footnotesize{Case 1'} & \footnotesize{order} & $e^h (1)$\, \footnotesize{Case 2'} & \footnotesize{order} & $e^h (1)$\, \footnotesize{Case 3'} & \footnotesize{order} \\ \hline
$(h_0,\tau_0)$ & $1.63 \times 10^{-1}$ & - & $1.65 \times 10^{-1}$ & - & $1.66 \times 10^{-1}$ & - \\ \hline
$\left(\frac{h_0}{2}, \frac{\tau_0}{4}\right)$ & $3.95 \times 10^{-2}$ & 2.04 & $4.23 \times 10^{-2}$ & 1.96 & $4.04 \times 10^{-2}$ & 2.04 \\ \hline
$\left(\frac{h_0}{2^2}, \frac{\tau_0}{4^2}\right)$ & $9.66 \times 10^{-3}$ & 2.03 & $1.01 \times 10^{-2}$ & 2.07 & $9.76 \times 10^{-3}$ & 2.05 \\ \hline

 \end{tabular}\vspace{1em}
\end{center}
\caption{Numerical errors of $e_{h,\tau}(t=1)$ with $k(\boldsymbol{n})=k_0(\boldsymbol{n})$ (upper part) and $k(\boldsymbol{n})=\sup\limits_{\boldsymbol{n}\in \mathbb{S}^2}k_0(\boldsymbol{n})$ (lower part) for Cases 1-3, while $h_0:=2^{-1}$ and $\tau_0:=\frac{2^{-1}}{25}$. Here Case $i$/ Case $i$' means the anisotropic energy in Case $i$ with $k(\boldsymbol{n})=k_0(\boldsymbol{n})/ k(\boldsymbol{n})=\sup\limits_{\boldsymbol{n}\in \mathbb{S}^2}k_0(\boldsymbol{n})$, respectively.}
 \end{table}

For the spatial discretization, the initial surface $S_0$ is approximated by the polyhedral mesh $\Gamma_{h, \tau}(0)=\Gamma^0=\cup_{j=1}^J\sigma_j^0$ with the mesh size parameter $h$ via the \textit{CFDTool}. The time step $\tau$ corresponding to the mesh $\Gamma^0$ is chosen as $\tau=\frac{2}{25}h^2$. To solve the implicit unified SP-PFEM \eqref{eq: full PFEM}, we employ the Newton iteration proposed in \cite{bao2022symmetrized}, where the tolerance $\varepsilon$ is chosen as $10^{-12}$.

\begin{figure}[htp!]
\centering
\includegraphics[width=1\textwidth]{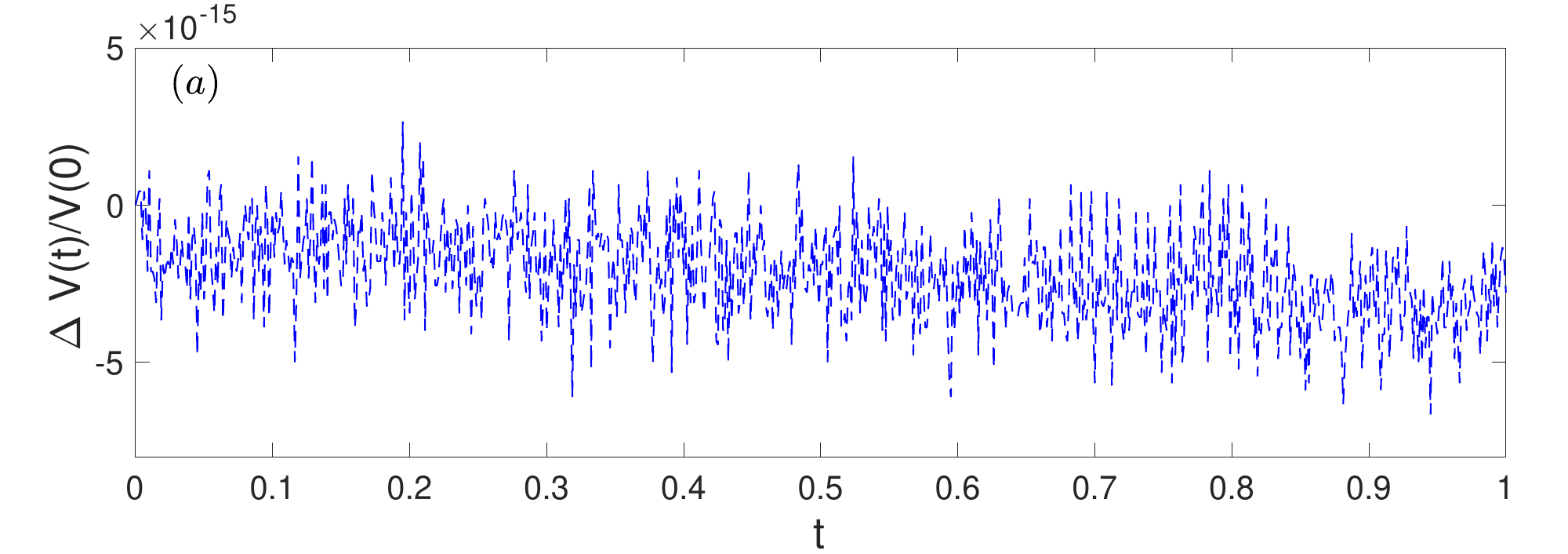}
\includegraphics[width=1\textwidth]{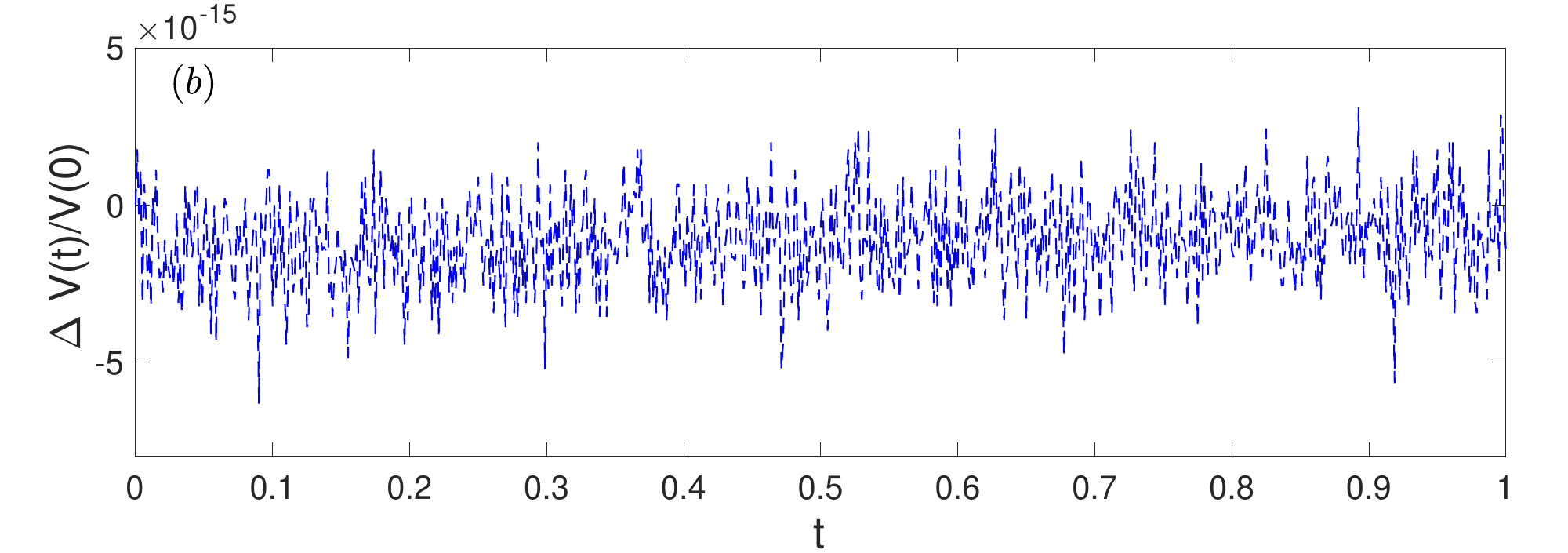}
\includegraphics[width=1\textwidth]{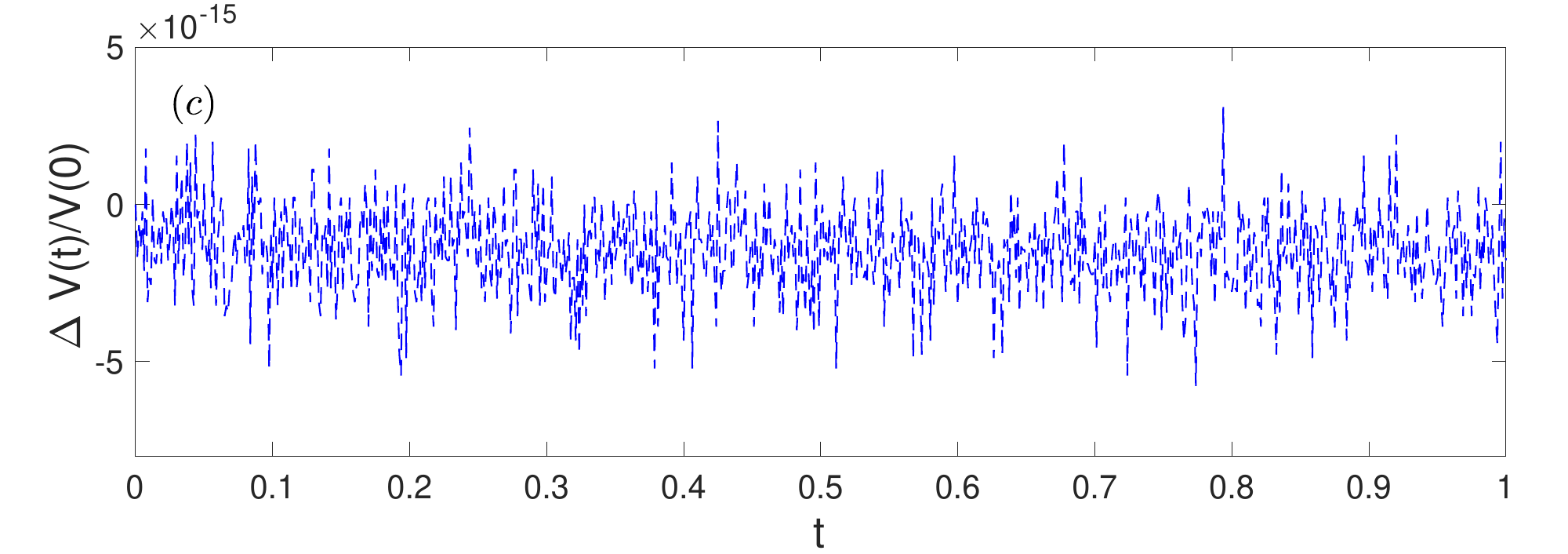}
\caption{Plot of the normalized volume change $\frac{\Delta V(t)}{V(0)}$ for different cases: (a) for Case I, (b) for Case II,  and (c) for Case III.}
\label{fig: volume}
\end{figure}

In the numerical tests, we consider the following three anisotropic surface energies as follows
\begin{itemize}
  \item Case I: $\gamma(\boldsymbol{n})=1+\frac{1}{8}(n_1^3+n_2^3+n_3^3)$;
  \item Case II: $\gamma(\boldsymbol{n})=1+\frac{1}{4}(n_1^3+n_2^3+n_3^3)$;
  \item Case III: $\gamma(\boldsymbol{n})=\sqrt{(\frac{5}{2}+\frac{3}{2}\text{sign}(n_1))n_1^2+n_2^2+n_3^2}$.
\end{itemize} 
The minimal stabilizing function $k_0(\boldsymbol{n})$ is determined numerically as follows: for the interpolation points $\boldsymbol{n}_{ij}=(\cos\theta_i\cos\phi_j, \cos\theta_i\sin\phi_j, \sin\theta_i)^T$ for $\theta_i=\frac{i\pi}{10}, \phi_j=-\frac{\pi}{2}+\frac{j-1}{10}\pi, \,  i=1, 2, \ldots, 20, j=1, 2, \ldots, 21$, we solve the optimization problem \eqref{eq: def of k0} to determine $k_0(\boldsymbol{n}_{ij})$; and for the other points, $k_0(\boldsymbol{n})$ is given by the bilinear interpolation. 

\begin{figure}[htp!]
\centering
\includegraphics[width=12cm,height=14cm,angle=0,]{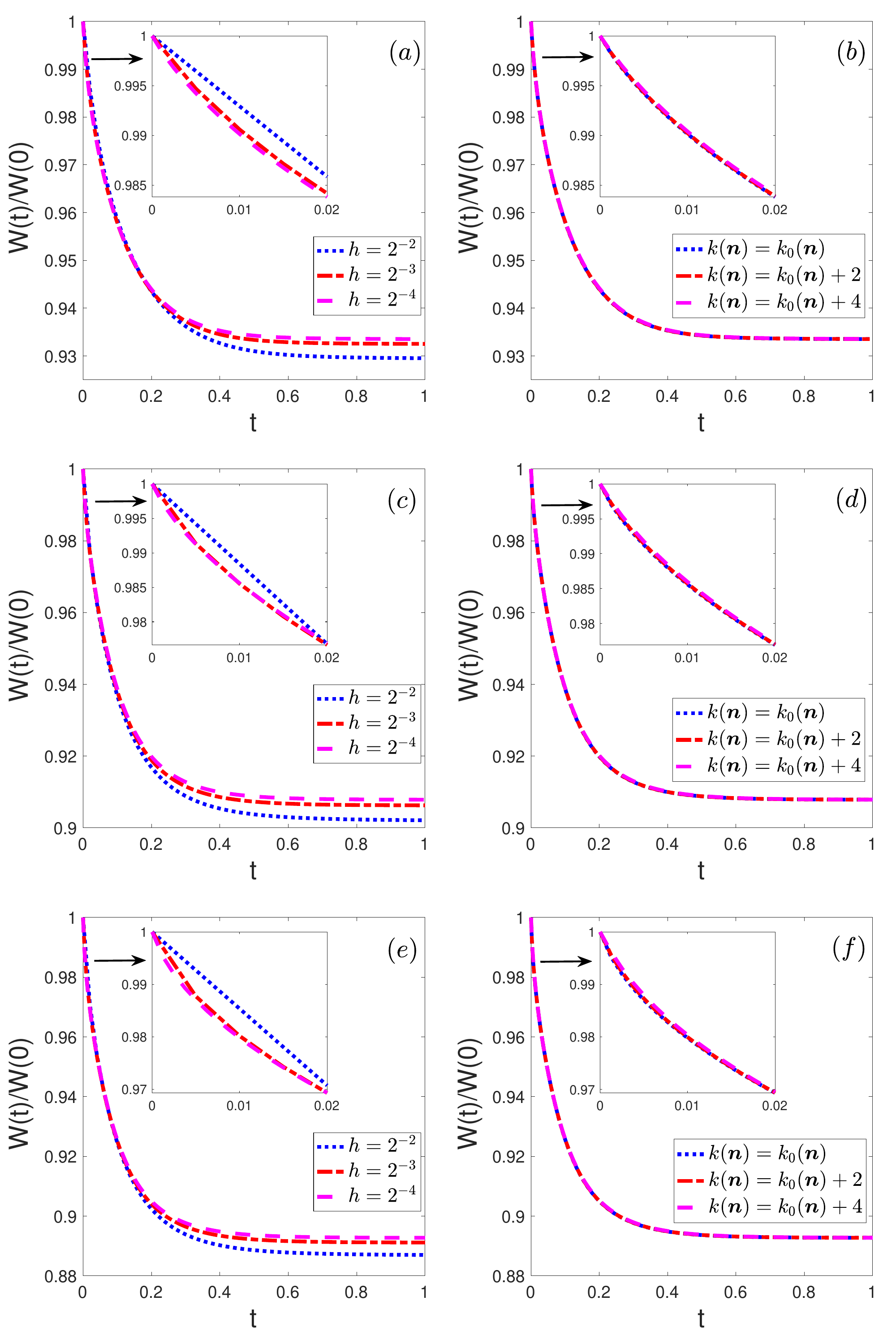}
\caption{Plot of the normalized energy $\frac{W(t)}{W(0)}$ for anisotropic energies in Case I-III with the fixed $k(\boldsymbol{n})=k_0(\boldsymbol{n})$ (left column) for different $h$ and $\tau$; or the fixed $h=2^{-4}$ and $\tau=\frac{2}{25}h^2$ with different $k(\boldsymbol{n})$ (right column). The top, middle, and bottom rows correspond to the anisotropic energies in Case I-III, respectively.}
\label{fig: energy1}
\end{figure}

To test the convergence rate, the initial surface $S_0$ is chosen as a $2\times 1\times 1$ cuboid. We denote the numerical error between the numerical solution as $\Gamma_{h, \tau}(t)$ and the exact solution $\Gamma(t)$ as $e^h(t)$. The intermediate surface $\Gamma_{h, \tau}(t)$ is defined as
\begin{equation}
  \Gamma_{h, \tau}(t):=\frac{t-t_m}{\tau}\Gamma_{h, \tau}(t_m)+\frac{t_{m+1}-t}{\tau}\Gamma_{h, \tau}(t_{m+1}), \qquad t_m\leq t<t_{m+1}.
\end{equation}
And the exact solution $\Gamma(t)$ is approximated by $S_{h_e, \tau_e}(t)$ with a small mesh size of $h_e=2^{-4}$ and a time step of $\tau_e=\frac{2}{25}h_e^2$. We adopt the manifold distance $M(S_{h, \tau}(t), \Gamma(t))$ to quantify the numerical error $e^h(t)$, which is given as
\begin{equation}
  e^h(t)=M(\Gamma_{h, \tau}(t), \Gamma(t)):=2|\Omega_1\cup \Omega_2|-|\Omega_1|-|\Omega_2|.
\end{equation}
Here $\Omega_1$ and $\Omega_2$ represent the  regions enclosed by $\Gamma_{h, \tau}(t)$ and $\Gamma(0)$, respectively.

\begin{figure}[htp!]
\centering
\includegraphics[width=1\textwidth]{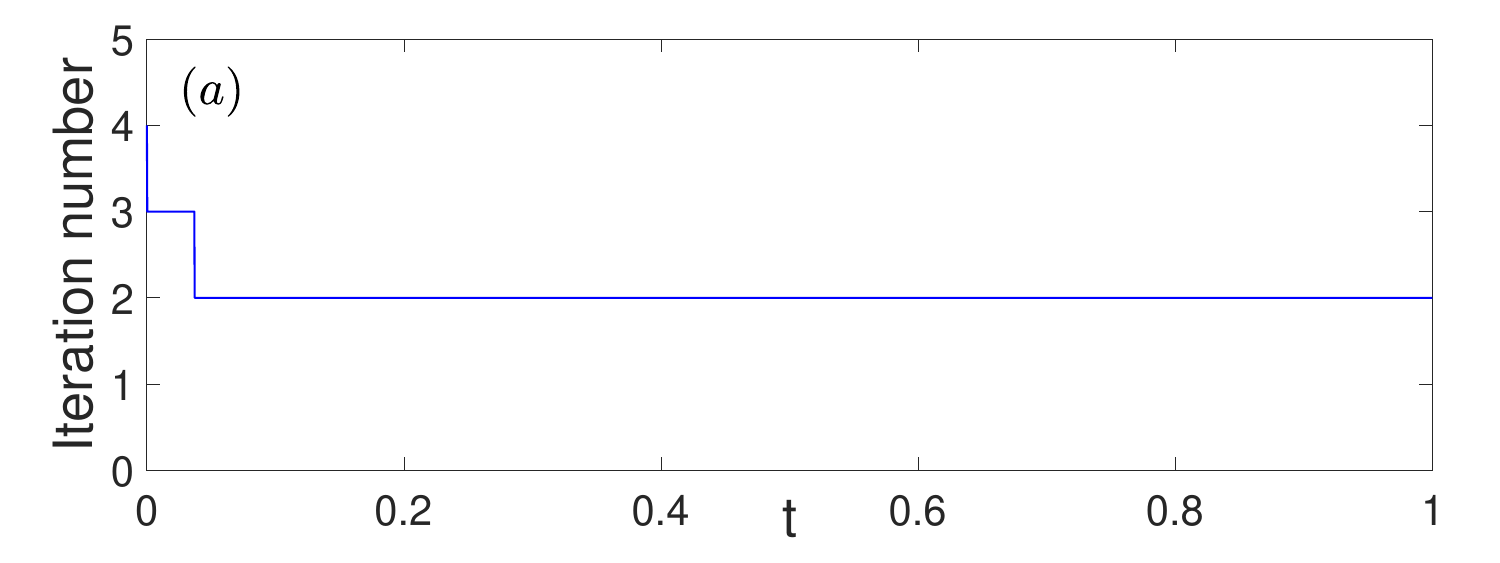}
\includegraphics[width=1\textwidth]{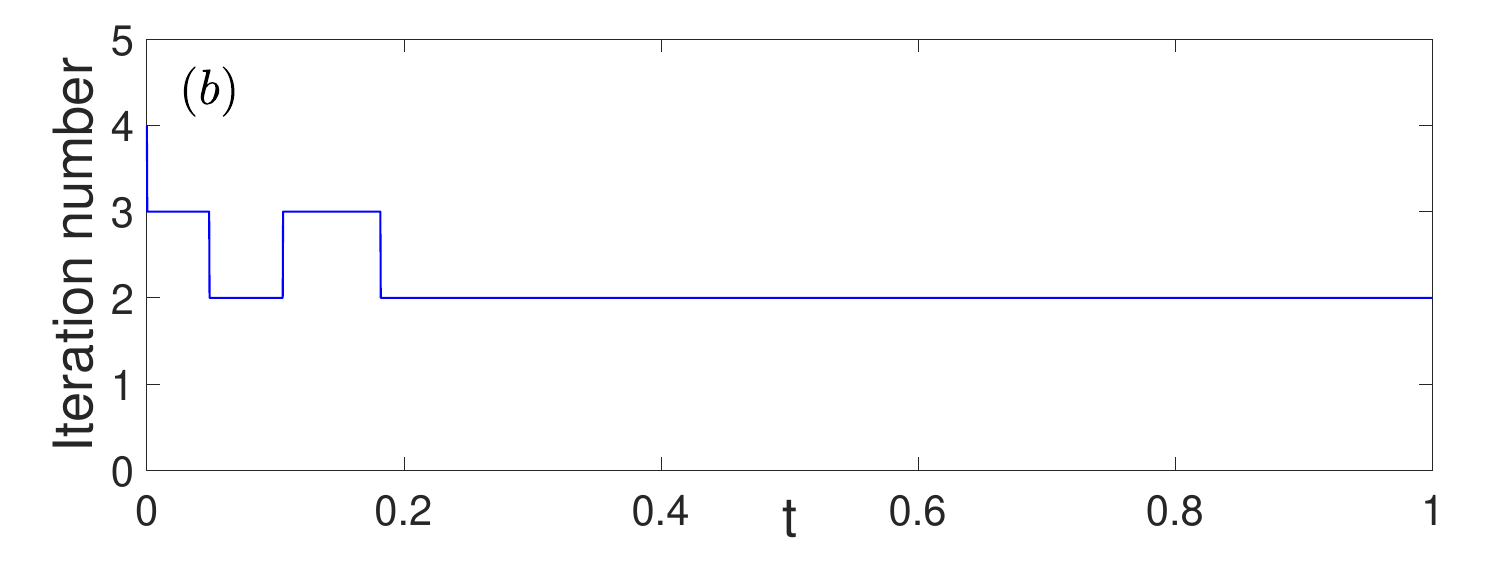}
\includegraphics[width=1\textwidth]{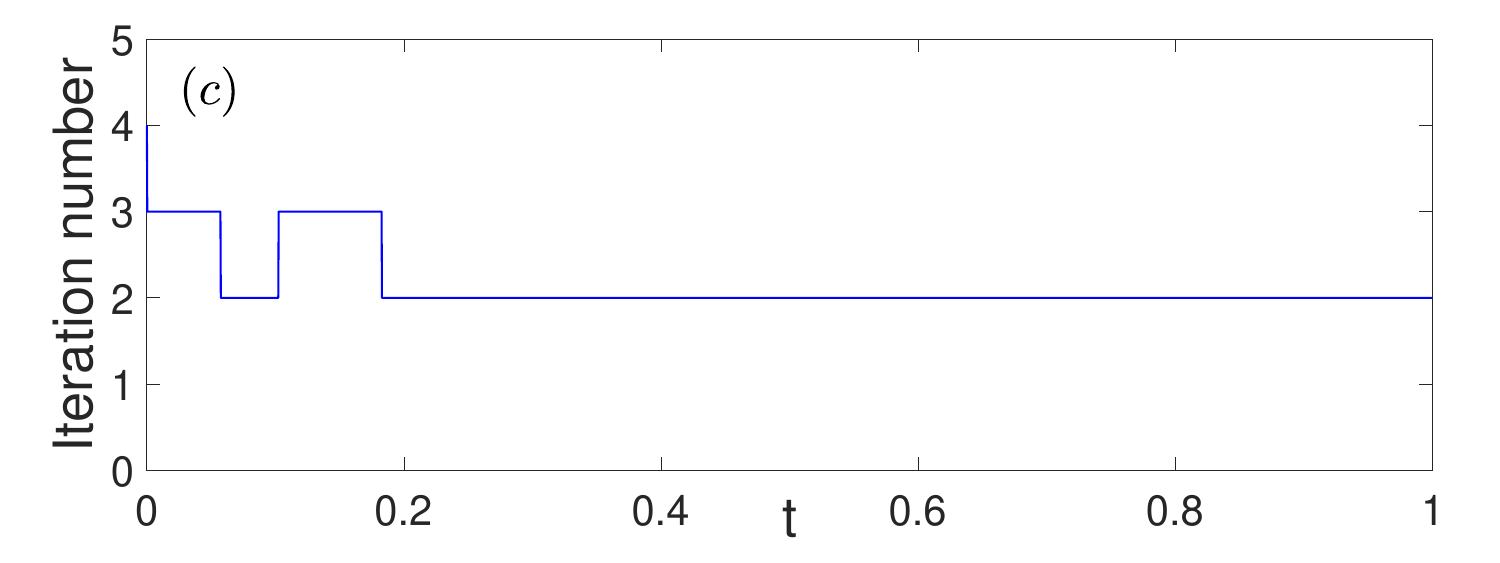}
\caption{Plot of the number of iterations per time step for anisotropies in (a) Case I, (b) Case II, and (c) Case III.}
\label{fig: CNT}
\end{figure}

\begin{figure}[hbtp!]
\centering
\includegraphics[width=1\textwidth]{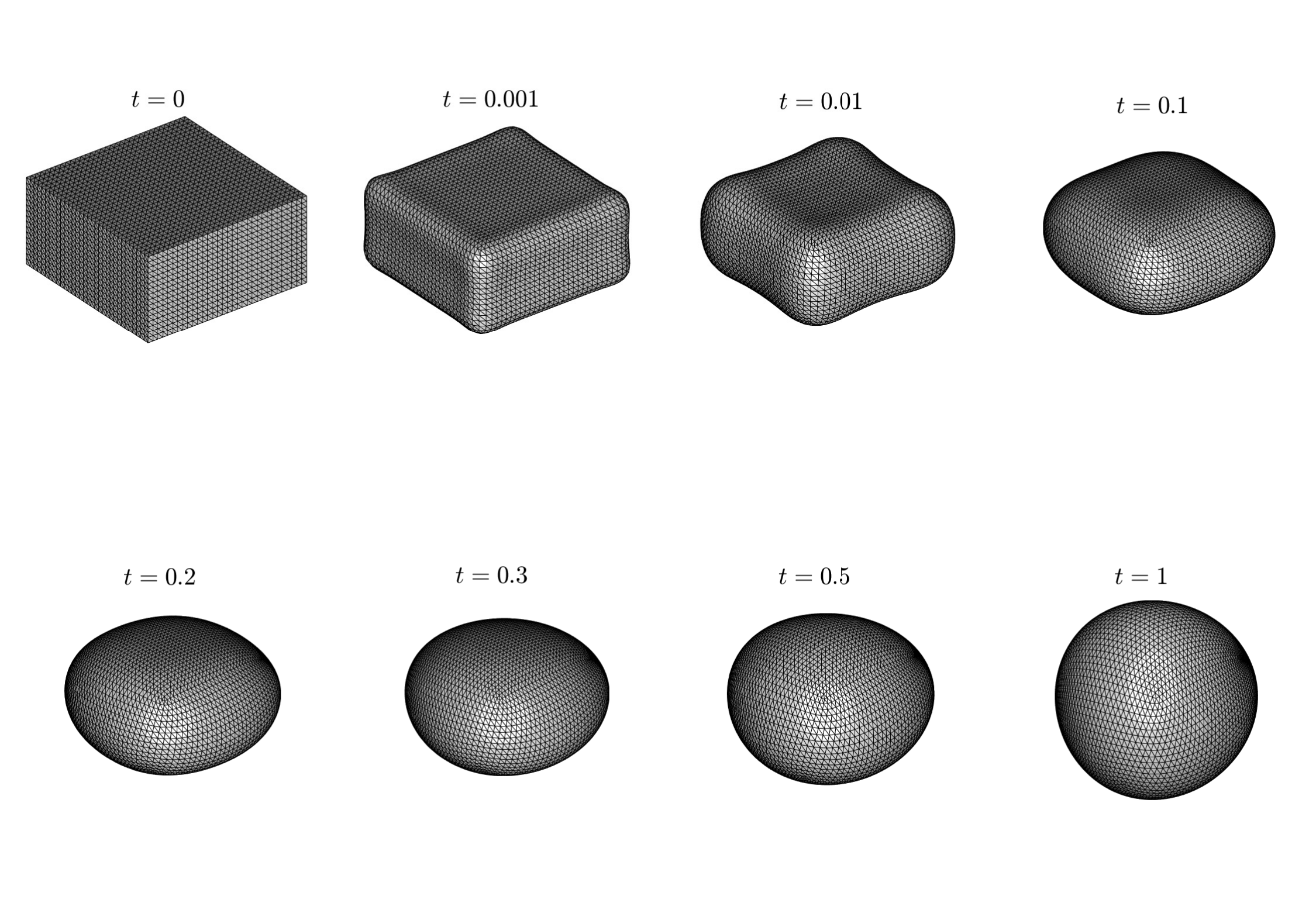}
\caption{Evolution of a $2\times 2\times 1$ cuboid by anisotropic surface diffusion with a weak anisotropy $\gamma(\boldsymbol{n})=1+\frac{1}{8}(n_1^3+n_2^3+n_3^3)$ and $k(\boldsymbol{n})=k_0(\boldsymbol{n})$ at different times.}
\label{fig: evolve1}
\end{figure}

\begin{figure}[hbtp!]
\centering
\includegraphics[width=1\textwidth]{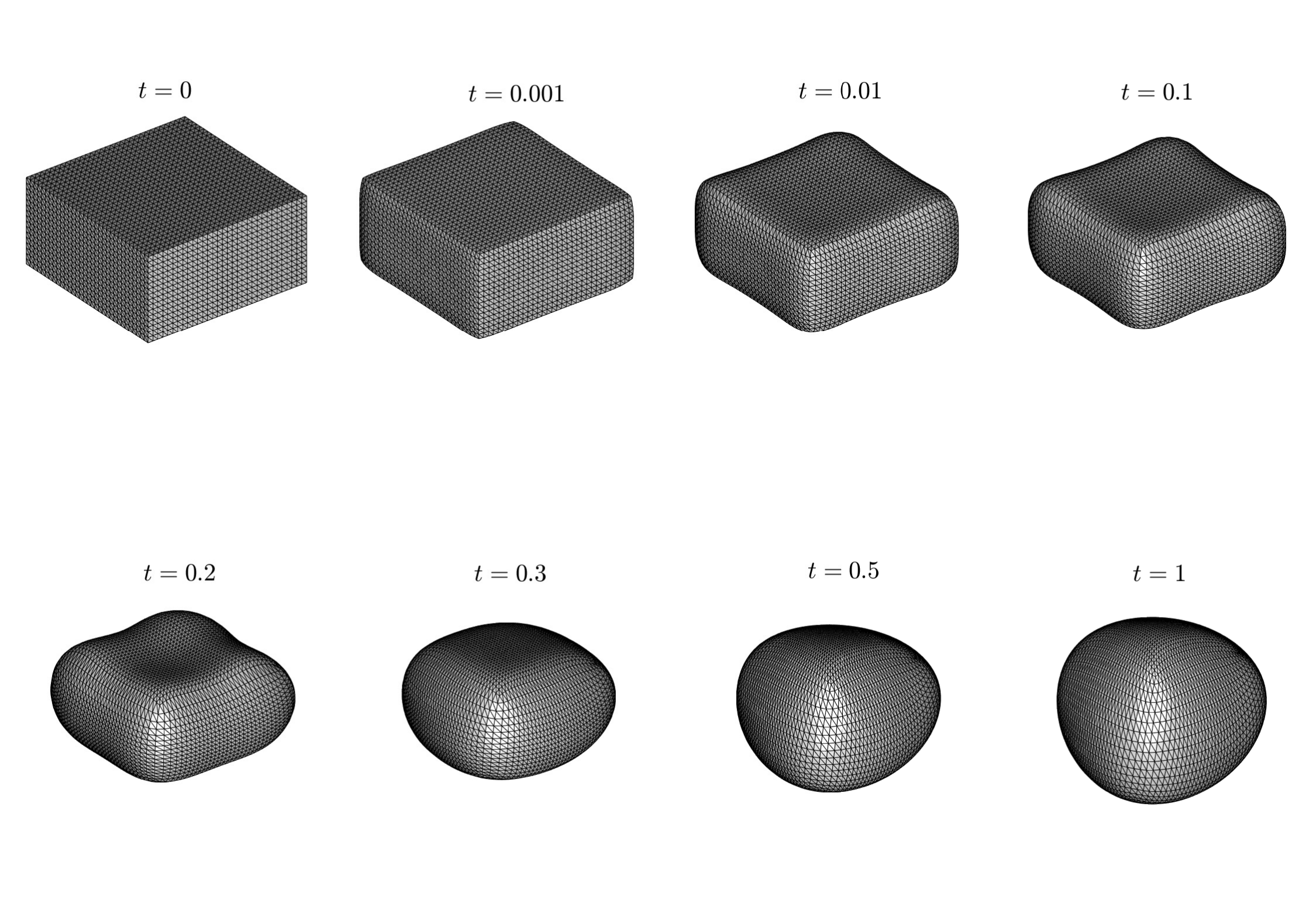}
\caption{Evolution of a $2\times 2\times 1$ cuboid by anisotropic surface diffusion with a weak anisotropy $\gamma(\boldsymbol{n})=1+\frac{1}{4}(n_1^3+n_2^3+n_3^3)$ and $k(\boldsymbol{n})=k_0(\boldsymbol{n})$ at different times.}
\label{fig: evolve2}
\end{figure}
\begin{figure}[hbtp!]
\centering
\includegraphics[width=1\textwidth]{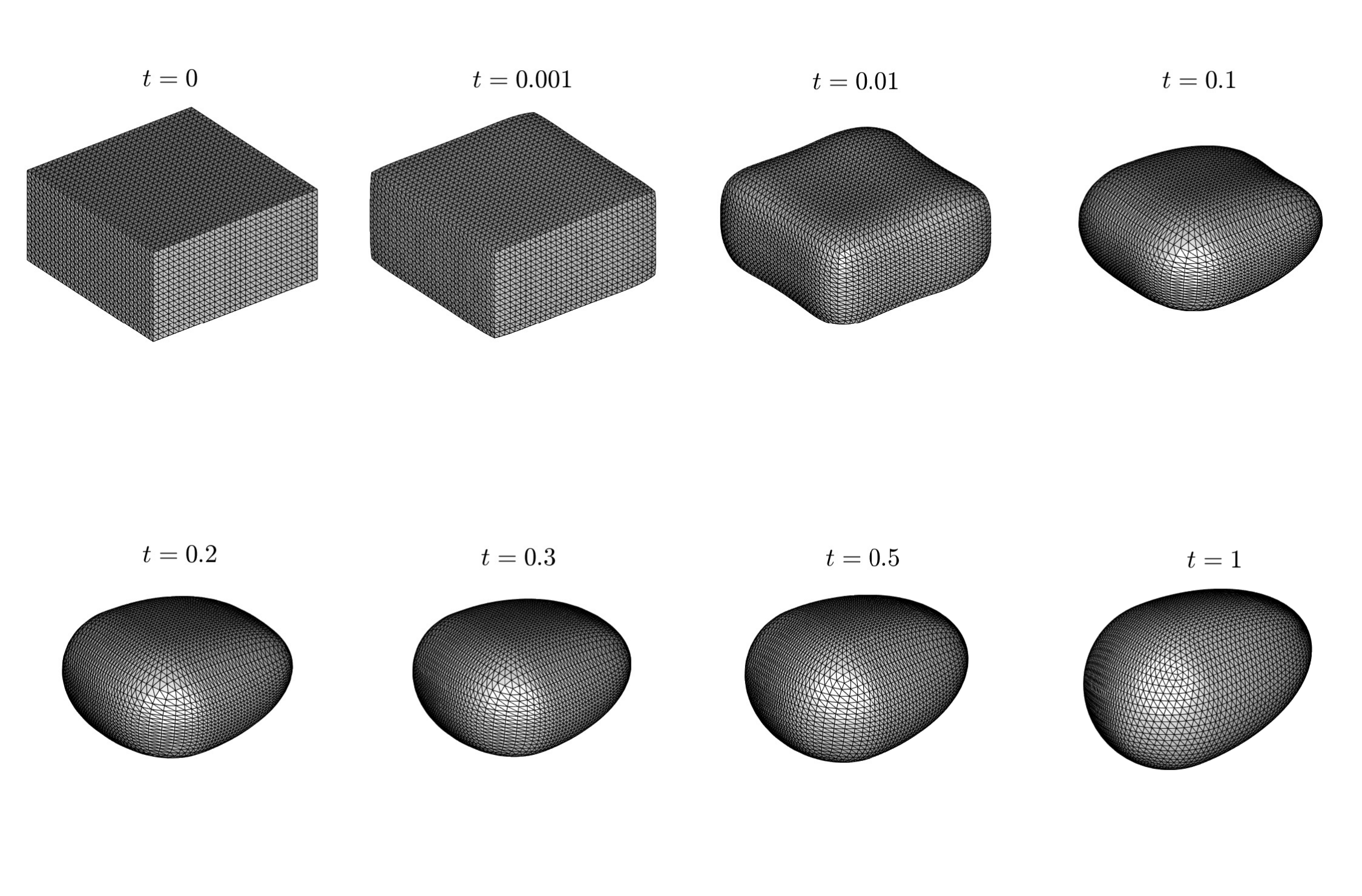}
\caption{Evolution of a $2\times 2\times 1$ cuboid by anisotropic surface diffusion with a weak anisotropy $\gamma(\boldsymbol{n})=\sqrt{(\frac{5}{2}+\frac{3}{2}\text{sign}(n_1))n_1^2+n_2^2+n_3^2}$ and $k(\boldsymbol{n})=k_0(\boldsymbol{n})$ at different times.}
\label{fig: evolve3}
\end{figure}

The numerical errors for the anisotropic energies $\gamma(\boldsymbol{n})$ in Case I-III and the stabilizing functions $k(\boldsymbol{n})=k_0(\boldsymbol{n})$ and $k(\boldsymbol{n})=\sup_{\boldsymbol{n}\in \mathbb{S}^2}k_0(\boldsymbol{n})$ are presented in Table \ref{tb: convergence rate}. Our results demonstrate that the order of convergence in $h$ is approximately 2 for all configurations, which suggests that our unified SP-PFEM \eqref{eq: full PFEM} is efficient. Additionally, we can reduce the bilinear interpolation cost by setting $k(\boldsymbol{n})=\sup_{\boldsymbol{n}\in \mathbb{S}^2}k_0(\boldsymbol{n})$ but achieve the same performance of efficiency.

To validate the volume conservation and the energy dissipation, we consider the normalized volume change $\frac{\Delta V(t)}{V(0)}$ and the normalized energy $\frac{W(t)}{W(0)}$ as follows
\begin{equation}
  \frac{\Delta V(t)}{V(0)}\Big|_{t=t_m}:=\frac{V^m-V^0}{V^0}, \qquad \frac{W(t)}{W(0)}\Big|_{t=t_m}:=\frac{W^m}{W^0}.
\end{equation}
We investigate the anisotropic energies in Case I-III with the initial $2\times 1\times 1$ elliptic and fixed mesh size $h=2^{-4}$ and time step $\tau=\frac{2}{25}h^2$. Figure \ref{fig: volume} shows the normalized volume changes with $k(\boldsymbol{n})=k_0(\boldsymbol{n})$, and Figure \ref{fig: energy1} illustrates the normalized energies with different $k(\boldsymbol{n})\geq k_0(\boldsymbol{n})$.  It can be seen in Figure \ref{fig: volume} that the normalized volume changes are in the same order of $10^{-15}$, which is almost at the machine round-off accuracy. We also observe that the normalized energies are monotonically decreasing, as shown in Figure \ref{fig: energy1}. In particular, the right column in Figure \ref{fig: energy1} indicates that the normalized energies are independent of $k(\boldsymbol{n})$.

To test the computational cost of the nonlinear system \eqref{eq: full PFEM}, we present the number of iterations per step when solving with Newton's method. We adopt the same initial mesh shape, mesh size, time step, and anisotropy as before. Figure \ref{fig: CNT}(a)-(c) show the number of iterations as functions of time corresponding to anisotropies in Case I, Case II, and Case III, respectively. We observe that, similar to the isotropic case presented in \cite{bao2021structurepreserving}, the number of iterations for \eqref{eq: full PFEM} are mostly between 2 and 4 for different anisotropies. This suggests that our unified SP-PFEM \eqref{eq: full PFEM} is highly efficient in computation.

The morphological evolutions of the $2\times 2\times 1$ cuboid under anisotropic surface diffusion are shown in Figures \ref{fig: evolve1}-\ref{fig: evolve3} for different anisotropic surface energy densities. We observe that the mesh points are well-behaved in each figure, and no mesh regularization is required. Moreover, by comparing the numerical equilibrium shapes in Figures \ref{fig: evolve1} and \ref{fig: evolve2}, we can find the corners become sharper as the anisotropic effect increases from $\frac{1}{8}$ to $\frac{1}{4}$. Finally, we note that although the regularity of $\gamma(\boldsymbol{n})$ in Case III is not $C^2$, our unified SP-PFEM \eqref{eq: full PFEM} works well for all the numerical tests, which validates our Remark \ref{remark: 2.1}.

\section{Conclusions}
We proposed a unified structure-preserving parametric finite element method (SP-PFEM) for anisotropic surface diffusion in both 2D and 3D, subject to a simple and mild condition $\gamma(-\boldsymbol{n})<(5-d)\gamma(\boldsymbol{n})$ and $\gamma(\boldsymbol{p})\in C^2(\mathbb{R}^{d}_*)$. By introducing the unified surface energy matrix $\boldsymbol{G}_k(\boldsymbol{n})$, we derived a new and unified conservative weak formulation for the chemical potential $\mu$ in all dimensions. Based on this unified conservative weak formulation, we used piecewise linear functions for spatial discretization and the implicit-explicit Euler method for the temporal discretization to obtain the unified SP-PFEM. To establish the unconditional energy stability, we introduced the minimal stabilizing function $k_0(\boldsymbol{n})$ based on the auxiliary matrix $\tilde{M}$ and $M$ in 2D and 3D, respectively. We developed a novel technique to show the existence of $k_0(\boldsymbol{n})$, which is also unified for all dimensions. Then we illustrated that the existence of $k_0(\boldsymbol{n})$ leads to local energy estimates and further unconditional energy stability. In fact, this new framework
for establishing unconditional energy stability of SP-PFEM sheds light on how to prove unconditional energy stability of other numerical methods 
for geometric partial differential equations.  
Finally, we presented numerical experiments to validate our analysis results and the efficiency of the unified SP-PFEM.



\appendix
\section*{Appendix A. A Divergence Theorem for Matrix-Valued Functions on a Closed Manifold of Codimension One}

\setcounter{theorem}{0}

\renewcommand{\thetheorem}{A.\arabic{theorem}}
\renewcommand{\theequation}{A.\arabic{equation}}
\renewcommand{\thedefinition}{\arabic{theorem}}
\renewcommand{\thelemma}{\arabic{theorem}}
\renewcommand{\theproposition}{\arabic{theorem}}
{
In this Appendix, we generalize the divergence theorem for vector fields to matrix-valued functions on a closed manifold of codimension one.

\begin{lemma}[Divergence theorem for matrix-valued functions]\label{exlem: divergence theorem for matrix-valued functions}
Let $\Gamma\subset\mathbb{R}^d$ ($d\ge2$) be a closed orientable $C^2$-manifold of codimension one with the unit normal vector $\boldsymbol{n}$, and let $\boldsymbol{F}:\Gamma \to \mathbb{R}^{d\times d}$ be a $C^1$ matrix-valued 
function. Then for any $\boldsymbol{\omega} \in C^1(\Gamma)$, we have
\begin{equation}\label{matrxintegr}
  \int_\Gamma \left(\nabla_\Gamma \cdot (\boldsymbol{F}\nabla_\Gamma \boldsymbol{X})\right) \cdot \boldsymbol{\omega}\,dA = - \int_\Gamma (\boldsymbol{F}\nabla_\Gamma \boldsymbol{X}): \nabla_\Gamma \boldsymbol{\omega} \, dA.
\end{equation}
\end{lemma}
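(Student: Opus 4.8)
The plan is to reduce the matrix identity \eqref{matrxintegr} to the classical surface divergence theorem applied row by row. Write $\boldsymbol{G}:=\boldsymbol{F}\nabla_\Gamma\boldsymbol{X}$ and recall from \cite[Lemma~9~(i)]{Barrett2020} that $\nabla_\Gamma\boldsymbol{X}=I_d-\boldsymbol{n}\boldsymbol{n}^T$ is the orthogonal projection onto the tangent plane of $\Gamma$. The decisive structural observation is that
\[
  \boldsymbol{G}\boldsymbol{n}=\boldsymbol{F}(I_d-\boldsymbol{n}\boldsymbol{n}^T)\boldsymbol{n}=\boldsymbol{F}(\boldsymbol{n}-\boldsymbol{n})=\boldsymbol{0},
\]
so that each row $\boldsymbol{g}_i$ of $\boldsymbol{G}$ (i.e.\ $(\boldsymbol{g}_i)_j=G_{ij}$) satisfies $\boldsymbol{g}_i\cdot\boldsymbol{n}=0$; in other words every row of $\boldsymbol{G}$ is a \emph{tangential} vector field on $\Gamma$. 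Since $\boldsymbol{F}\in C^1(\Gamma)$ and $\boldsymbol{n}\in C^1(\Gamma)$ (as $\Gamma$ is $C^2$), each $\boldsymbol{g}_i$ is a $C^1$ tangential field, and for $\boldsymbol{\omega}$ as in the statement so is $\omega_i\boldsymbol{g}_i$.

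Next I would expand both sides of \eqref{matrxintegr} componentwise. With the rowwise surface divergence $(\nabla_\Gamma\cdot\boldsymbol{G})_i=\sum_j\underline{D}_jG_{ij}=\nabla_\Gamma\cdot\boldsymbol{g}_i$, consistent with \eqref{eq: surface divergence}, and the Frobenius pairing $\boldsymbol{G}:\nabla_\Gamma\boldsymbol{\omega}=\sum_{i,j}G_{ij}\underline{D}_j\omega_i=\sum_i\boldsymbol{g}_i\cdot\nabla_\Gamma\omega_i$, the claimed identity becomes
\[
  \sum_{i=1}^d\int_\Gamma \omega_i\,(\nabla_\Gamma\cdot\boldsymbol{g}_i)\,dA
  =-\sum_{i=1}^d\int_\Gamma \nabla_\Gamma\omega_i\cdot\boldsymbol{g}_i\,dA .
\]
For each fixed $i$ I would apply the Leibniz rule for the surface divergence, $\nabla_\Gamma\cdot(\omega_i\boldsymbol{g}_i)=\omega_i\,\nabla_\Gamma\cdot\boldsymbol{g}_i+\nabla_\Gamma\omega_i\cdot\boldsymbol{g}_i$, and integrate over $\Gamma$. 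Because $\omega_i\boldsymbol{g}_i$ is a $C^1$ tangential field on the closed orientable $C^2$ manifold $\Gamma$, the surface divergence theorem (no boundary term; see, e.g., \cite{deckelnick2005computation}) gives $\int_\Gamma\nabla_\Gamma\cdot(\omega_i\boldsymbol{g}_i)\,dA=0$, whence $\int_\Gamma\omega_i\,\nabla_\Gamma\cdot\boldsymbol{g}_i\,dA=-\int_\Gamma\nabla_\Gamma\omega_i\cdot\boldsymbol{g}_i\,dA$. Summing over $i=1,\dots,d$ then yields \eqref{matrxintegr}.

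Equivalently, one may skip the tangentiality remark and invoke directly the general surface divergence theorem on a closed hypersurface, $\int_\Gamma\nabla_\Gamma\cdot\boldsymbol{v}\,dA=\int_\Gamma(\nabla_\Gamma\cdot\boldsymbol{n})(\boldsymbol{v}\cdot\boldsymbol{n})\,dA$, with $\boldsymbol{v}=\omega_i\boldsymbol{g}_i$; the right-hand side vanishes precisely because $\boldsymbol{g}_i\cdot\boldsymbol{n}=(\boldsymbol{G}\boldsymbol{n})_i=0$. I expect the only genuine point to get right is this cancellation: the analogous identity is \emph{false} for a general matrix field in place of $\boldsymbol{F}\nabla_\Gamma\boldsymbol{X}$, and it is exactly the factor $\nabla_\Gamma\boldsymbol{X}=I_d-\boldsymbol{n}\boldsymbol{n}^T$ that annihilates the mean-curvature contribution. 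The remaining ingredients --- the Leibniz rule for $\nabla_\Gamma\cdot$, the rowwise reading of $\nabla_\Gamma\cdot$ on matrices, and the $C^1$/$C^2$ regularity needed for the divergence theorem --- are routine.
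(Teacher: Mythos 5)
Your proof is correct, and it takes a genuinely shorter and more conceptual route than the paper's. Both arguments hinge on the same structural fact: because $\nabla_\Gamma\boldsymbol{X}=I_d-\boldsymbol{n}\boldsymbol{n}^T$, the matrix $\boldsymbol{G}=\boldsymbol{F}\nabla_\Gamma\boldsymbol{X}$ annihilates $\boldsymbol{n}$, so each row of $\boldsymbol{G}$ is a tangential $C^1$ field. From there you reduce componentwise, apply the Leibniz rule $\nabla_\Gamma\cdot(\omega_i\boldsymbol{g}_i)=\omega_i\,\nabla_\Gamma\cdot\boldsymbol{g}_i+\nabla_\Gamma\omega_i\cdot\boldsymbol{g}_i$, and invoke the standard surface divergence theorem (equivalently, the Green's formula $\int_\Gamma\nabla_\Gamma\cdot\boldsymbol{v}\,dA=\int_\Gamma(\nabla_\Gamma\cdot\boldsymbol{n})(\boldsymbol{v}\cdot\boldsymbol{n})\,dA$ on a closed hypersurface, whose right side vanishes by tangentiality). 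The paper instead unwinds the definition $\nabla_\Gamma\cdot\boldsymbol{F}=\sum_i(\nabla_\Gamma\cdot(\boldsymbol{F}^T\boldsymbol{e}_i))\boldsymbol{e}_i$ in a local parametrization $\mathfrak{X}$, uses the Riemannian coordinate formula for the divergence (via $g_{ij},g^{ij},\sqrt{g}$ and Lee's Proposition~2.46), integrates by parts in the chart, and then checks the right-hand side reduces to the same coordinate expression. Your argument buys brevity and coordinate-freeness by relying on two classical facts (product rule for $\nabla_\Gamma\cdot$ and the closed-surface divergence theorem for tangential fields), whereas the paper's version is more self-contained at the cost of an explicit local computation. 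Your remark that the identity would fail for a general matrix field in place of $\boldsymbol{F}\nabla_\Gamma\boldsymbol{X}$ (because the mean-curvature term would then survive) is exactly the right thing to emphasize and is implicit in the paper's observation that $\boldsymbol{n}\cdot((\nabla_\Gamma\boldsymbol{X})^T\boldsymbol{F}^T\boldsymbol{e}_i)=0$.
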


\begin{proof}
Let $\{\boldsymbol{e}_1, \ldots, \boldsymbol{e}_d\}$ be the orthonormal basis of $\mathbb{R}^d$. The surface divergence of a matrix-valued function $\boldsymbol{F}:\Gamma \to \mathbb{R}^{d\times d}$ is defined as \cite[Definition 5 (viii)]{Barrett2020}
\begin{equation}\label{exeq: surface divergence of a matrix-valued function}
  \nabla_\Gamma \cdot \boldsymbol{F} := \sum_{i=1}^d\left(\nabla_\Gamma \cdot (\boldsymbol{F}^T \boldsymbol{e}_i)\right) \boldsymbol{e}_i.
\end{equation}
Using this definition, the left-hand side of \eqref{matrxintegr} can be written as
\begin{align*}
  \int_\Gamma \left(\nabla_\Gamma \cdot (\boldsymbol{F}\nabla_\Gamma \boldsymbol{X})\right) \cdot \boldsymbol{\omega}\,dA &= \int_\Gamma \left( \sum_{i=1}^d\left(\nabla_\Gamma \cdot ((\nabla_\Gamma \boldsymbol{X})^T \boldsymbol{F}^T \boldsymbol{e}_i)\right) \boldsymbol{e}_i\right) \cdot \boldsymbol{\omega}\,dA.
\end{align*}
Now, let $\Gamma$ be locally parametrized by $\mathfrak{X}(\boldsymbol{\theta}): U\to \mathbb{R}^d$ with $\boldsymbol{\theta}=(\theta_1, \theta_2, \ldots, \theta_{d-1})^T$ in $\mathbb{R}^{d-1}$. We use the standard notations:
\begin{equation}
  g_{ij}:=\frac{\partial \mathfrak{X}}{\partial \theta_i}\cdot \frac{\partial \mathfrak{X}}{\partial \theta_j}, \quad g:= \det(g_{ij}), \quad \left(g^{ij}\right):=\left(g_{ij}\right)^{-1}, \quad  i, j=1, 2, \ldots, d-1.
\end{equation}
Noticing $\nabla_\Gamma \boldsymbol{X} = I_d - \boldsymbol{n}\boldsymbol{n}^T$, we know $\boldsymbol{n}\cdot ((\nabla_\Gamma \boldsymbol{X})^T \boldsymbol{F}^T \boldsymbol{e}_i) = 0$, i.e., $(\nabla_\Gamma \boldsymbol{X})^T \boldsymbol{F}^T \boldsymbol{e}_i$ is a vector field, and $((\nabla_\Gamma \boldsymbol{X})\circ \mathfrak{X}) \frac{\partial \mathfrak{X}}{\partial \theta_i} = \frac{\partial \mathfrak{X}}{\partial \theta_i}$ for
$i=1,\ldots,d-1$. Therefore, using Proposition 2.46 in \cite{lee2018introduction}, the left-hand side of \eqref{matrxintegr} is further simplified as
\begin{align*}
  &\int_\Gamma \left(\nabla_\Gamma \cdot (\boldsymbol{F}\nabla_\Gamma \boldsymbol{X})\right) \cdot \boldsymbol{\omega}\,dA \\
  &= \int_U \left( \sum_{i=1}^d \frac{1}{\sqrt{g}}\sum_{j, l=1}^{d-1}\frac{\partial}{\partial \theta_j}\left(\sqrt{g}g^{jl}(((\nabla_\Gamma \boldsymbol{X})^T \boldsymbol{F}^T \boldsymbol{e}_i)\circ \mathfrak{X})\cdot \frac{\partial \mathfrak{X}}{\partial \theta_l}\right) \boldsymbol{e}_i\right) \cdot (\boldsymbol{\omega}\circ \mathfrak{X})\sqrt{g}\, d\theta \\
  &= \int_U \sum_{i=1}^d\sum_{j, l=1}^{d-1}\frac{\partial}{\partial \theta_j}\left(\sqrt{g}g^{jl}\frac{\partial \mathfrak{X}}{\partial \theta_l}^T((\boldsymbol{F}^T \boldsymbol{e}_i)\circ \mathfrak{X})\right) \left(\boldsymbol{e}_i \cdot (\boldsymbol{\omega}\circ \mathfrak{X})\right)\, d\theta \\
  &= \int_U \sum_{i=1}^d \sum_{j, l=1}^{d-1}\left(\frac{\partial}{\partial \theta_j}\left(\sqrt{g}g^{jl}(\boldsymbol{F}\circ \mathfrak{X})\frac{\partial \mathfrak{X}}{\partial \theta_l}\right)\cdot \boldsymbol{e}_i\right) \left(\boldsymbol{e}_i \cdot (\boldsymbol{\omega}\circ \mathfrak{X})\right)\, d\theta \\
  &= \int_U \sum_{j, l=1}^{d-1}\frac{\partial}{\partial \theta_j}\left(\sqrt{g}g^{jl}(\boldsymbol{F}\circ \mathfrak{X})\frac{\partial \mathfrak{X}}{\partial \theta_l}\right) \cdot (\boldsymbol{\omega}\circ \mathfrak{X})\, d\theta\\
  &= -\int_U \sum_{j, l=1}^{d-1}\left(\sqrt{g}g^{jl}(\boldsymbol{F}\circ \mathfrak{X})\frac{\partial \mathfrak{X}}{\partial \theta_l}\right) \cdot\frac{\partial (\boldsymbol{\omega}\circ \mathfrak{X})}{\partial \theta_j} \, d\theta.
\end{align*}
For the right-hand side of \eqref{matrxintegr}, we need the surface gradient of a vector-valued function $\boldsymbol{f}$, which is given by \cite[Remark 8 (i)]{Barrett2020}
\begin{equation*}
  (\nabla_\Gamma\boldsymbol{f})\circ \mathfrak{X}=\sum_{i, j=1}^{d-1}g^{ij}\frac{\partial (\boldsymbol{f}\circ \mathfrak{X})}{\partial \theta_i}\left(\frac{\partial \mathfrak{X}}{\partial {\theta_j}}\right)^T.
\end{equation*}
Using this, we can write:
\begin{align*}
  &- \int_\Gamma (\boldsymbol{F}\nabla_\Gamma \boldsymbol{X}): \nabla_\Gamma \boldsymbol{\omega} \, dA \\
  &= - \int_U \left(\sum_{i, j=1}^{d-1}g^{ij}(\boldsymbol{F}\circ \mathfrak{X})\frac{\partial \mathfrak{X}}{\partial \theta_i}\left(\frac{\partial \mathfrak{X}}{\partial \theta_j}\right)^T\right) : \left(\sum_{k, l=1}^{d-1}g^{kl}\frac{\partial (\boldsymbol{\omega}\circ \mathfrak{X})}{\partial \theta_k}\left(\frac{\partial \mathfrak{X}}{\partial \theta_l}\right)^T\right) \sqrt{g}\, d\theta \\
  &= -\int_U \sum_{i, j, k, l=1}^{d-1}g^{ij}g^{kl}\left(\frac{\partial \mathfrak{X}}{\partial \theta_j}\cdot \frac{\partial \mathfrak{X}}{\partial \theta_l}\right)\left(\left((\boldsymbol{F}\circ \mathfrak{X})\frac{\partial \mathfrak{X}}{\partial \theta_i}\right)\cdot \frac{\partial (\boldsymbol{\omega}\circ \mathfrak{X})}{\partial \theta_k}\right)\sqrt{g}\, d\theta \\
  &= -\int_U \sum_{i, j, k, l=1}^{d-1}g^{ij}g^{kl}g_{jl}\left(\left((\boldsymbol{F}\circ \mathfrak{X})\frac{\partial \mathfrak{X}}{\partial \theta_i}\right)\cdot \frac{\partial (\boldsymbol{\omega}\circ \mathfrak{X})}{\partial \theta_k}\right)\sqrt{g}\, d\theta \\
  &= -\int_U \sum_{i, k=1}^{d-1}g^{ik}\left(\left((\boldsymbol{F}\circ \mathfrak{X})\frac{\partial \mathfrak{X}}{\partial \theta_i}\right)\cdot \frac{\partial (\boldsymbol{\omega}\circ \mathfrak{X})}{\partial \theta_k}\right)\sqrt{g}\, d\theta.
\end{align*}
By combining the above two results, we complete the proof.
\end{proof}}




\begin{thebibliography}{10}


\bibitem{bansch2004surface} E. B\"ansch, P. Morin, and R. H. Nochetto, \textit{Surface diffusion of graphs: variational formulation, error analysis, and simulation}, SIAM J. Numer. Anal. \textbf{42}(2) (2004), 773-799.


\bibitem{bao2022volume} W. Bao, H. Garcke, R. Nürnberg, and Q. Zhao, \textit{Volume-preserving parametric finite element methods for axisymmetric geometric evolution equations}, J. Comput. Phys. \textbf{460} (2022), 111180.


\bibitem{bao2021symmetrized} W. Bao, W. Jiang, and Y. Li, \textit{A symmetrized parametric finite element method for anisotropic surface diffusion of closed curves}, SIAM J. Numer. Anal. \textbf{61}(2) (2023), 617–641.

\bibitem{bao2017parametric} W. Bao, W. Jiang, Y. Wang, and Q. Zhao, \textit{A parametric finite element method for solid-state dewetting problems with anisotropic surface energies}, J. Comput. Phys. \textbf{330} (2017), 380-400.

\bibitem{bao2022structure} W. Bao and Y. Li, \textit{A structure-preserving parametric finite element method for geometric flows with anisotropic surface energy}, Numer. Math. \textbf{156} (2024), 609-639.

\bibitem{bao2022symmetrized} W. Bao and Y. Li, \textit{A symmetrized parametric finite element method for anisotropic surface diffusion in three dimensions}, SIAM J. Sci. Comput. \textbf{45}(4) (2023), A1438–A1461.

\bibitem{bao2021structurepreserving} W. Bao and Q. Zhao, \textit{A structure-preserving parametric finite element method for surface diffusion}, SIAM J. Numer. Anal. \textbf{59}(5) (2021), 2775-2799.

\bibitem{BZ2023} W. Bao and Q. Zhao, \textit{An energy-stable parametric finite element method for simulating solid-state dewetting problems in three dimensions}, J. Comput. Math. \textbf{41} (2023), 771-796.

\bibitem{barrett2007parametric} J. W. Barrett, H. Garcke, and R. N\"urnberg, \textit{A parametric finite element method for fourth order geometric evolution equations}, J. Comput. Phys. \textbf{222}(1) (2007), 441-467.
\bibitem{barrett2008numerical} J. W. Barrett, H. Garcke, and R. N\"urnberg, \textit{Numerical approximation of anisotropic geometric evolution equations in the plane}, IMA J. Numer. Anal. \textbf{28}(2) (2008), 292-330.
\bibitem{barrett2008parametric} J. W. Barrett, H. Garcke, and R. N\"urnberg, \textit{On the parametric finite element approximation of evolving hypersurfaces in R3}, J. Comput. Phys. \textbf{227}(9) (2008).
\bibitem{barrett2008variational} J. W. Barrett, H. Garcke, and R. N\"urnberg, \textit{A variational formulation of anisotropic geometric evolution equations in higher dimensions}, Numer. Math. \textbf{109}(1) (2008), 1-44.

\bibitem{Barrett2020} J. W. Barrett, H. Garcke, and R. Nürnberg, \textit{Parametric finite element approximations of curvature-driven interface evolutions}, in Handb. Numer. Anal. \textbf{21} (2020), 275-423.



\bibitem{bellman1997introduction} R. Bellman, \textit{Introduction to Matrix Analysis}, SIAM, 1997.

\bibitem{boyd2004convex} S. Boyd and L. Vandenberghe, \textit{Convex Optimization}, Cambridge University Press, 2004.

\bibitem{cahn1974vector} J. W. Cahn and D. W. Hoffman, \textit{A vector thermodynamics for anisotropic surfaces—II. Curved and faceted surfaces}, Acta Metall. Mater. \textbf{22}(10) (1974), 1205-1214.

\bibitem{hoffman1972vector} D. W. Hoffman and J. W. Cahn, \textit{A vector thermodynamics for anisotropic surfaces: I. Fundamentals and application to plane surface junctions}, Surf. Sci. \textbf{31} (1972), 368-388.

\bibitem{Cahn94} J. W. Cahn and J. E. Taylor, \textit{Overview no. 113 surface motion by surface diffusion}, Acta Metall. Mater. \textbf{42}(4) (1994), 1045-1063.

\bibitem{carter1995shape} W. C. Carter, A. Roosen, J. W. Cahn, and J. E. Taylor, \textit{Shape evolution by surface diffusion and surface attachment limited kinetics on completely faceted surfaces}, Acta Metall. Mater. \textbf{43}(12) (1995), 4309-4323.

\bibitem{chang1999thermodynamic} L.-S. Chang, E. Rabkin, B. B. Straumal, B. Baretzky, and W. Gust, \textit{Thermodynamic aspects of the grain boundary segregation in Cu (Bi) alloys}, Acta Mater. \textbf{47}(15-16) (1999), 4041-4046.

\bibitem{clarenz2000anisotropic} U. Clarenz, U. Diewald, and M. Rumpf, \textit{Anisotropic geometric diffusion in surface processing}, IEEE Vis. 2000 (2000).


\bibitem{deckelnick2005computation} K. Deckelnick, G. Dziuk, and C. M. Elliott, \textit{Computation of geometric partial differential equations and mean curvature flow}, Acta Numer. \textbf{14} (2005), 139-232.

\bibitem{du2010tangent} P. Du, M. Khenner, and H. Wong, \textit{A tangent-plane marker-particle method for the computation of three-dimensional solid surfaces evolving by surface diffusion on a substrate}, J. Comput. Phys. \textbf{229}(3) (2010), 813-827.

\bibitem{DL2024} B. Duan and B. Li, \textit{New artificial tangential motions for parametric finite element approximation of surface evolution}, SIAM J. Sci. Comput. \textbf{46} (2024), A587-A608.

\bibitem{EF2017} M. Elliott and H. Fritz, \textit{On approximations of the curve shortening flow and of the mean curvature flow based on the DeTurck trick}, IMA J. Numer. Anal. \textbf{37} (2017), 543-603.

\bibitem{Fonseca14} I. Fonseca, A. Pratelli, and B. Zwicknagl, \textit{Shapes of epitaxially grown quantum dots}, Arch. Ration. Mech. Anal. \textbf{214} (2014), 359-401.

\bibitem{fulton2013representation} W. Fulton and J. Harris, \textit{Representation Theory: A First Course}, Springer Science \& Business Media 129, 2013.

\bibitem{giga2006surface} Y. Giga, \textit{Surface Evolution Equations}, Springer, 2006.

\bibitem{girao1996crystalline} P. M. Gir{\~a}o and R. V. Kohn, \textit{The crystalline algorithm for computing motion by curvature}, in Variational Methods for Discontinuous Structures, Springer (1996), 7-18.


\bibitem{gurtin2002interface} M. E. Gurtin and M. E. Jabbour, \textit{Interface evolution in three dimensions with curvature-dependent energy and surface diffusion: interface-controlled evolution, phase transitions, epitaxial growth of elastic films}, Arch. Ration. Mech. Anal. \textbf{163} (2002), 171-208.

\bibitem{Hatcher02} A. Hatcher, \textit{Algebraic Topology}, Cambridge University Press, 2002.

\bibitem{hauffe1955application} K. Hauffe, \textit{The Application of The Theory of Semiconductors to Problems of Heterogeneous Catalysis}, Adv. Catal. \textbf{7} (1955), 213-257.

\bibitem{Hausser07} F. Hau{\ss}er and A. Voigt, \textit{A discrete scheme for parametric anisotropic surface diffusion}, J. Sci. Comput. \textbf{30}(2) (2007), 223-235.


\bibitem{HL2022} J. Hu and B. Li, \textit{Evolving finite element methods with an artificial tangential velocity for mean curvature flow and Willmore flow}, Numer. Math. \textbf{152} (2022), 127-181.

\bibitem{jiang2012} W. Jiang, W. Bao, C. V. Thompson, and D. J. Srolovitz, \textit{Phase field approach for simulating solid-state dewetting problems}, Acta Mater. \textbf{60}(15) (2012), 5578-5592.

\bibitem{jiang2021} W. Jiang and B. Li, \textit{A perimeter-decreasing and area-conserving algorithm for surface diffusion flow of curves}, J. Comput. Phys. \textbf{443} (2021), 110531.

\bibitem{jiang2016solid} W. Jiang, Y. Wang, Q. Zhao, D. J. Srolovitz, and W. Bao, \textit{Solid-state dewetting and island morphologies in strongly anisotropic materials}, Scr. Mater. \textbf{115} (2016), 123-127.

\bibitem{JZB2020} W. Jiang, Q. Zhao, and W. Bao, \textit{Sharp-interface model for simulating solid-state dewetting in three dimensions}, SIAM J. Appl. Math. \textbf{80} (2020), 1654-1677.

\bibitem{jiang2019sharp} W. Jiang and Q. Zhao, \textit{Sharp-interface approach for simulating solid-state dewetting in two dimensions: A Cahn-Hoffman $\boldsymbol{\xi}$-vector formulation}, Phys. D. \textbf{390} (2019), 69-83.

\bibitem{kovacs2017convergence} B. Kov{\'a}cs, B. Li, C. Lubich, and C. A. Power Guerra, \textit{Convergence of finite elements on an evolving surface driven by diffusion on the surface}, Numer. Math. \textbf{137} (2017), 643-689.

\bibitem{lee2018introduction} J. M. Lee, \textit{Introduction to Riemannian Manifolds}, 2nd ed., Springer, 2018.

\bibitem{li2020energy} Y. Li and W. Bao, \textit{An energy-stable parametric finite element method for anisotropic surface diffusion}, J. Comput. Phys. \textbf{446} (2021), 110658.

\bibitem{Mullins57} W. W. Mullins, \textit{Theory of thermal grooving}, J. Appl. Phys. \textbf{28} (1957), 333-339.

\bibitem{Naffouti17} M. Naffouti et al., \textit{Complex dewetting scenarios of ultrathin silicon films for large-scale nanoarchitectures}, Sci. Adv. \textbf{3}(11) (2017), 1472.

\bibitem{sharipova2022solid} A. Sharipova, L. Klinger, A. Bisht, B. B. Straumal, and E. Rabkin, \textit{Solid-state dewetting of thin Au films on oxidized surface of biomedical TiAlV alloy}, Acta Mater. \textbf{231} (2022), 117919.

\bibitem{taylor1992ii} J. E. Taylor, \textit{Mean curvature and weighted mean curvature}, Acta Metall. Mater. \textbf{40} (1992), 1475-1485.

\bibitem{taylor1992overview} J. E. Taylor, J. W. Cahn, and C. A. Handwerker, \textit{Overview no. 98 i---geometric models of crystal growth}, Acta Metall. Mater. \textbf{40}(7) (1992), 1443-1474.


\bibitem{Thompson12} C. V. Thompson, \textit{Solid-state dewetting of thin films}, Annu. Rev. Mater. Res. \textbf{42} (2012), 399-434.

\bibitem{wang2015sharp} Y. Wang, W. Jiang, W. Bao, and D. J. Srolovitz, \textit{Sharp interface model for solid-state dewetting problems with weakly anisotropic surface energies}, Phys. Rev. B. \textbf{91}(4) (2015), 045303.

\bibitem{xu2009local} Y. Xu and C. Shu, \textit{Local discontinuous Galerkin method for surface diffusion and Willmore flow of graphs}, J. Sci. Comput. \textbf{40}(1-3) (2009), 375-390.

\bibitem{Ye10a} J. Ye and C. V. Thompson, \textit{Mechanisms of complex morphological evolution during solid-state dewetting of single-crystal nickel thin films}, Appl. Phys. Lett. \textbf{97}(7) (2010), 071904.

\bibitem{ZJB2020} Q. Zhao, W. Jiang, and W. Bao, \textit{A parametric finite element method for solid-state dewetting problems in three dimensions}, SIAM J. Sci. Comput. \textbf{42} (2020), B327-B352.

\bibitem{ZJB2021} Q. Zhao, W. Jiang, and W. Bao, \textit{An energy-stable parametric finite element method for simulating solid-state dewetting}, IMA J. Numer. Anal. \textbf{41} (2021), 2026-2055.









\end{thebibliography}

%
%

\end{document}